\newtheorem{theorem}{Theorem}
\newtheorem*{theorem*}{Theorem}
\newtheorem{definition}[theorem]{Definition}
\newtheorem*{definition*}{Definition}
\newtheorem*{question*}{Question}
\newtheorem*{conjecture*}{Conjecture}
\newtheorem*{convention*}{Convention}
\newtheorem{assumption}[theorem]{Assumption}
\newtheorem*{assumption*}{Assumption}
\newtheorem*{induction*}{Induction Hypothesis}
\newtheorem{corollary}[theorem]{Corollary}
\newtheorem*{corollary*}{Corollary}
\newtheorem{remark}[theorem]{Remark}
\newtheorem*{remark*}{Remark}
\newtheorem{proposition}[theorem]{Proposition}
\newtheorem*{proposition*}{Proposition}
\newtheorem{lemma}[theorem]{Lemma}
\newtheorem*{lemma*}{Lemma}
\newtheorem{fact}[theorem]{Fact}
\newtheorem*{fact*}{Fact}
\newtheorem*{claim*}{Claim}
\newtheorem{theoremA}{Theorem}
\theoremstyle{definition}
\newtheorem{example}[theorem]{Example}
\newtheorem*{example*}{Example}
\numberwithin{theorem}{section}
\numberwithin{claim}{section}
\numberwithin{equation}{section}
\DeclareMathOperator{\an}{an}
\DeclareMathOperator{\dcl}{dcl}
\DeclareMathOperator{\Graph}{Gr}
\DeclareMathOperator{\RCF}{RCF}
\DeclareMathOperator{\res}{res}
\DeclareMathOperator{\rk}{rk}
\DeclareMathOperator{\Th}{Th}
\newcommand{\N}{\mathbb{N}}
\newcommand{\Q}{\mathbb{Q}}
\newcommand{\R}{\mathbb{R}}
\newcommand{\T}{\mathbb{T}}
\newcommand{\cB}{\mathcal B}
\newcommand{\cC}{\mathcal C}
\newcommand{\cH}{\mathcal H}
\newcommand{\cL}{\mathcal L}
\newcommand{\cO}{\mathcal O}
\newcommand{\cR}{\mathcal R}
\newcommand{\mfM}{\mathfrak{M}}
\newcommand{\fm}{\mathfrak{m}}
\newcommand{\0}{\emptyset}
\newcommand{\dclL}{\dcl_\cL}
\newcommand{\dimL}{\dim_\cL}
\newcommand{\inv}{^{-1}}
\newcommand{\jet}{\mbox{\small$\mathscr{J}$}_\der}
\newcommand{\deltajet}{\mbox{\small$\mathscr{J}$}_\derdelta}
\newcommand{\ojet}[1]{\mbox{\small$\mathscr{J}$}_{#1}}
\newcommand{\Ld}{\cL^\der}
\newcommand{\LdO}{\cL^{\cO,\der}}
\newcommand{\LO}{\cL^{\cO}}
\newcommand{\overbar}[1]{\mkern 2mu\overline{\mkern-2mu#1\mkern-2mu}\mkern 2mu}
\newcommand{\rkL}{\rk_\cL}
\newcommand{\Td}{T^\der}
\newcommand{\TdO}{T^{\cO,\der}}
\newcommand{\TO}{T^{\cO}}
\renewcommand{\preceq}{\preccurlyeq}
\renewcommand{\succeq}{\succcurlyeq}
\renewcommand{\geq}{\geqslant}
\renewcommand{\leq}{\leqslant}
\renewcommand{\epsilon}{\varepsilon}
\renewcommand{\k}{\boldsymbol{k}}
\DeclareFontFamily{OMS}{smallo}{}
\DeclareFontShape{OMS}{smallo}{m}{n}{<->s*[.65]cmsy10}{}
\DeclareSymbolFont{smallo@m}{OMS}{smallo}{m}{n}
\DeclareMathSymbol{\smallo}{\mathord}{smallo@m}{79}
\DeclareFontFamily{U}{fsy}{}
\DeclareFontShape{U}{fsy}{m}{n}{<->s*[.9]psyr}{}
\DeclareSymbolFont{der@m}{U}{fsy}{m}{n}
\DeclareMathSymbol{\der}{\mathord}{der@m}{182}
\DeclareSymbolFont{der@m}{U}{fsy}{m}{n}
\DeclareMathSymbol{\derdelta}{\mathord}{der@m}{100}
\author{Elliot Kaplan}
\email{kaplae2@mcmaster.ca}
\title{$T$-convex $T$-differential fields and their immediate extensions}
\subjclass[2020]{Primary 03C64. Secondary 12H05, 12J10}
\address{Department of Mathematics and Statistics, McMaster University, 1280 Main Street West, Hamilton, Ontario, L8S 4K1, Canada}
\begin{document}
\maketitle

\begin{abstract}
Let $T$ be a polynomially bounded o-minimal theory extending the theory of real closed ordered fields. Let $K$ be a model of $T$ equipped with a $T$-convex valuation ring and a $T$-derivation. If this derivation is continuous with respect to the valuation topology, then we call $K$ a \emph{$T$-convex $T$-differential field}. We show that every $T$-convex $T$-differential field has an immediate strict $T$-convex $T$-differential field extension which is spherically complete. In some important cases, the assumption of polynomial boundedness can be relaxed to power boundedness.
\end{abstract}


\section*{Introduction}
\noindent
In this article, $T$ is a complete, model complete o-minimal theory which extends the theory $\RCF$ of real closed ordered fields in some appropriate language $\cL \supseteq \{0,1,+,-, \cdot,<\}$. Let $K\models T$ and let $\cO$ be a $T$-convex valuation ring of $K$ (that is, a convex subset of $K$ which is closed under all unary $\cL(\emptyset)$-definable continuous functions on $K$; introduced in~\cite{DL95}). Let also $\der$ be a $T$-derivation on $K$ as defined in~\cite{FK19}, that is, $\der$ is a map $K\to K$ which satisfies the identity $\der F(u) = \nabla F(u)\cdot \der u$ for all $u \in K^n$ and all $\cL(\emptyset)$-definable functions $F$ which are $\cC^1$ at $u$. If the $T$-derivation $\der$ is continuous with respect to the valuation topology induced by $\cO$, then we call $K = (K,\cO,\der)$ a \textbf{$T$-convex $T$-differential field}. Motivating examples include $\cR$-Hardy fields, defined in~\cite{DMM94}, and the expansion of the field of logarithmic-exponential transseries by restricted analytic functions, considered in~\cite{DMM97}; see Examples~\ref{ex-hardy} and~\ref{ex-transseriesTdO} below.

\medskip\noindent
Let $K$ be a $T$-convex $T$-differential field. By~\cite[Lemma 4.4.7]{ADH17}, continuity of the derivation is equivalent to the existence of $\phi \in K^\times$ with $\der\smallo \subseteq \phi \smallo$, where $\smallo$ is the unique maximal ideal of $\cO$. In the special case that $\der\smallo \subseteq \smallo$, we say that $K$ has \textbf{small derivation}. Following~\cite{ADH18B}, we say that a $T$-convex $T$-differential field extension $M$ of $K$ is \textbf{strict} if
\[
\der \smallo \subseteq \phi\smallo\ \Longrightarrow\ \der_M\smallo_M \subseteq \phi\smallo_M,\qquad\der \cO \subseteq \phi\smallo\ \Longrightarrow\ \der_M\cO_M \subseteq \phi\smallo_M
\]
for each $\phi \in K^\times$. In this article, we show the following:

\begin{theoremA}[Corollary~\ref{cor-polycase}]
\label{thm-polystrict}
If $T$ is polynomially bounded, then $K$ has an immediate strict $T$-convex $T$-differential field extension which is spherically complete.
\end{theoremA}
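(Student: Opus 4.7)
The plan is to adapt the classical Kaplansky construction to the $T$-convex $T$-differential setting, following the strategy developed in \cite{ADH17} for valued differential fields. The idea is to iteratively adjoin pseudo-limits of pseudo-convergent sequences and appeal to Zorn's Lemma to obtain a maximal immediate strict extension, which by maximality will be spherically complete.

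First I would develop the theory of pseudo-convergent (pc) sequences in the $T$-convex $T$-differential setting, establishing that an immediate extension is spherically complete if and only if every pc-sequence in it has a pseudo-limit. The task then reduces to showing that any pc-sequence $(a_\rho)$ in $K$ without a pseudo-limit in $K$ acquires one in some immediate strict $T$-convex $T$-differential extension. Here I expect the familiar Kaplansky dichotomy to go through: either $(a_\rho)$ is of \emph{transcendental type}, meaning that no nontrivial differential polynomial over $K$ has values on $(a_\rho)$ pseudo-converging to $0$, or it is of \emph{algebraic type}, in which case some minimal-complexity differential polynomial $P$ witnesses the convergence.

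In the transcendental case, one adjoins the pseudo-limit $a$ as an element that is differentially transcendental over $K$. The valuation extends by Kaplansky's recipe and the extension stays $T$-convex by \cite{DL95}; the $T$-derivation extends by assigning $\der a$ an element of appropriate valuation so that $(\der a_\rho)$ pseudo-converges to $\der a$, which also secures strictness. The algebraic case is more delicate: one adjoins a root $a$ of the minimal witnessing polynomial $P$ and verifies that the resulting valued differential field remains immediate, $T$-convex, and strictly continuous. This calls for a Newton-diagram-style analysis of how $P$, together with $\cL$-definable operations, transforms $(a_\rho)$ near its pseudo-limit. It is here that polynomial boundedness of $T$ is crucial: it yields uniform estimates of the form $v\bigl(f(x)\bigr) \geq N\,v(x)$ for $\cL$-definable $f$ and sufficiently small $x$, providing the quantitative inputs needed to control valuations arising in the construction.

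A Zorn's Lemma argument on the class of immediate strict $T$-convex $T$-differential extensions of $K$ then produces a maximal extension $L$, and by the previous step $L$ must be spherically complete, giving Corollary~\ref{cor-polycase}. The main obstacle is the algebraic case of the pseudo-limit construction: developing a differential-polynomial calculus over $T$-convex $T$-differential fields robust enough to preserve all four properties (immediateness, $T$-convexity, strictness, and continuity of $\der$) simultaneously when adjoining an algebraic pseudo-limit. The reason polynomial boundedness appears as a hypothesis, rather than mere power boundedness, is precisely that only polynomial growth bounds on $\cL$-definable functions translate to the integer-valued valuation estimates on which this Newton-style calculus relies.
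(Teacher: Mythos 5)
Your overall shape (adjoin a pseudo-limit for each unfilled cut, Zorn out to a maximal extension, conclude spherical completeness from maximality) matches the paper's, but the core of your construction would not go through, and your account of where polynomial boundedness enters is not what happens in the paper.

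The central gap is your reliance on differential polynomials. In a $T$-convex $T$-differential field, adjoining a single element $a$ forces you to adjoin $\dclL(K\cup\{\jet^\infty a\})$, i.e.\ closure under \emph{all} $\cL(K)$-definable functions, not merely ring operations. A pc-sequence can fail to pseudo-converge under some $\cL(K)$-definable $F$ applied to jets even while every differential polynomial behaves well, so the classical Kaplansky dichotomy (transcendental vs.\ algebraic type over the differential polynomial ring) does not control whether the extension stays immediate or strict. The paper explicitly abandons differential polynomials for exactly this reason: it works instead with $\cL(K)$-definable functions $F$ ``in implicit form'' $F=\fm_F(Y_r-I_F)$, defines the set $Z(K,\ell)$ of such $F$ that \emph{vanish at} $(K,\ell)$, and replaces the role of ``positive Newton degree'' by the property of \emph{eventual smallness} of $F^\phi$ as $v\phi$ ranges over $\Gamma(\der)$. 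Newton-diagram arguments are stated to ``make no sense for arbitrary definable functions,'' so that part of your plan has no analog here.

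Second, your explanation of polynomial boundedness is off. The paper never uses ``integer-valued'' or quantitative growth estimates of the form $v(f(x))\geq N\,v(x)$. It proves the stronger Theorem~\ref{thm-mainthm}: under power boundedness alone, the conclusion holds whenever $S(\der)$ is a $\Lambda$-subspace of $\Gamma$, where $\Lambda$ is the field of exponents. Polynomial boundedness enters only because an archimedean $\Lambda$ forces every convex subgroup of $\Gamma$ to be a $\Lambda$-subspace, so the hypothesis on $S(\der)$ is automatic. Power boundedness, not polynomial boundedness, supplies the valuation and residue properties (Tyne) that characterize immediate $\TO$-extensions.

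Third, you are missing the coarsening reduction entirely. The one-step pseudo-limit construction (Theorem~\ref{thm-main1}, via Propositions~\ref{prop-zerosetisempty} and~\ref{prop-zerosetisnonempty}) only works in the case $S(\der)=\{0\}$, because eventual smallness is only a useful dichotomy then (Lemma~\ref{lem-differentvals} needs $S(\der)=\{0\}$). For the general case the paper coarsens by $\Delta=S(\der)$, handles the specialization $\res(K_\Delta)$ separately (where the induced derivation is trivial, so Corollary~\ref{cor-immediateext} applies), lifts back via Corollary~\ref{cor-resext}, applies Theorem~\ref{thm-main1} to the coarsened structure, and then un-coarsens. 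Without this two-level structure, a direct Zorn argument on pc-sequences would stall at cuts where the relevant ``eventual'' behavior cannot be pinned down.
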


\noindent
When $T = \RCF$, Theorem~\ref{thm-polystrict} says that every real closed valued field equipped with a continuous derivation has a spherically complete immediate strict extension, a result previously established in~\cite{ADH18B}. A case unique to this article is when $T = T_{\an}\coloneqq \Th(\R_{\an})$, the theory of the expansion of the real field by all restricted analytic functions. The theory $T_{\an}$ is model complete, o-minimal, and polynomially bounded~\cite{vdD86,Ga68}. We give an example of a spherically complete $T_{\an}$-convex $T_{\an}$-differential field in Example~\ref{ex-hahnTdO} below. Valued fields equipped with both analytic structure and an operator have been studied before by Rideau~\cite{Ri17}. Our setting, however, is quite different from that of Rideau. 

\medskip\noindent
In some important cases, the assumption of polynomial boundedness can be relaxed to power boundedness (a generalization of polynomial boundedness introduced in~\cite{Mi96}).

\begin{theoremA}[Corollaries~\ref{cor-nontrivres} and~\ref{cor-asymp}]
\label{thm-resasymp}
Suppose $T$ is power bounded.
\begin{enumerate}
\item[(i)] If $K$ has small derivation and the induced derivation on the residue field of $K$ is nontrivial, then $K$ has a spherically complete immediate $T$-convex $T$-differential field extension with small derivation. 
\item[(ii)] If $K$ is asymptotic (that is, for all nonzero $f,g\in \smallo$, we have $f / g\in \smallo \Longleftrightarrow \der f/\der g\in \smallo$), then $K$ has a spherically complete immediate asymptotic $T$-convex $T$-differential field extension.
\end{enumerate}
\end{theoremA}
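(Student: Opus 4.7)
The plan is to adapt the template used for Theorem~\ref{thm-polystrict}: reduce to showing that every pseudo-Cauchy sequence $(a_\rho)$ in $K$ without a pseudo-limit can be realized in an immediate $T$-convex $T$-differential field extension of $K$, while preserving the additional property (smallness of $\der$ in (i), or asymptoticity in (ii)). A standard Zorn argument on the partial order of immediate extensions of the required kind (inside a fixed sufficiently saturated ambient model) then produces a maximal such extension, and maximality forces spherical completeness, for otherwise a remaining pseudo-Cauchy sequence could be realized in a proper further extension.

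The realization step is precisely where polynomial boundedness enters the proof of Theorem~\ref{thm-polystrict}, since it is used to control error terms in Taylor-type expansions of $T$-definable functions along pseudo-Cauchy sequences. For part (i), I would exploit the nontriviality of the induced derivation on the residue field $\k$ to reduce the crucial differential-polynomial estimates to the residue level; the residue field is itself a power-bounded o-minimal field on which the induced derivation is nonzero, and this lets one run the polynomially-bounded argument ``modulo $\smallo$'' and then lift, paralleling the use of nontrivial residue derivation in~\cite{ADH17}. For part (ii), the asymptotic axiom $f \prec g \Longleftrightarrow \der f \prec \der g$ for nonzero $f,g \in \smallo$ provides a direct substitute for the polynomial-boundedness estimates, guaranteeing that the dominant-term analysis of a differential polynomial at a pseudo-Cauchy sequence behaves coherently under the valuation.

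The main obstacle, in both cases, is not the abstract construction of immediate extensions (which can be carried out by the methods of~\cite{ADH17,ADH18B} once one has the correct control on the derivation) but rather verifying that the construction preserves the additional property under the weaker hypothesis of power boundedness. For (i), this boils down to checking that the pseudo-limit $a$ we adjoin, with $\der a$ forced by continuity, still satisfies $\der a \in \smallo_{K\langle a\rangle}$; for (ii), that the asymptotic inequalities lift to $K\langle a\rangle$. In each case the extra hypothesis must be used in an essential way, since neither conclusion holds for a general power-bounded $T$-convex $T$-differential field. I expect (ii) to be the cleaner of the two, because the asymptotic inequality is itself a near-verbatim analogue of the polynomial-growth estimate and propagates naturally under immediate extensions, whereas (i) requires a more delicate residue-field reduction together with a lifting argument.
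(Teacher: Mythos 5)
Your high-level scaffold (realize pseudo-cuts one at a time, Zorn's lemma for maximality, maximality forces spherical completeness) matches the paper's outline, but your plan for each part misses the structural observation that makes both (i) and (ii) fall out of a single theorem, and it proposes a reduction that is not what the paper needs.

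The paper's proof of Theorem~\ref{thm-resasymp} goes through the invariant $S(\der)$, the stabilizer of $\Gamma(\der)$. The key point is that the hypotheses in both (i) and (ii) force $S(\der)=\{0\}$: under (i), having small derivation with nontrivial residue derivation gives $\Gamma(\der)=\Gamma^{\leq}$, hence $S(\der)=\{0\}$; and under (ii), asymptoticity directly gives $S(\der)=\{0\}$ (both via results cited from~\cite{ADH18B}). Theorem~\ref{thm-main1} then says: if $T$ is merely power bounded and $S(\der)=\{0\}$, then $K$ has a spherically complete immediate \emph{strict} $\TdO$-extension. Finally, the corollaries are immediate because in each case strictness of immediate extensions is \emph{equivalent} to the desired additional property: strict iff small derivation in (i) (via Fact~\ref{fact-bigtobig}), and strict iff asymptotic in (ii). Your proposal does not mention $S(\der)$ or strict extensions at all, so you are missing both the unifying reduction and the bridge from ``strict'' to ``small derivation''/``asymptotic.''

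A second gap concerns your plan for (i). You suggest running ``the polynomially-bounded argument modulo $\smallo$'' at the residue level and then lifting. But the residue field is also only power bounded, not polynomially bounded, so there is no polynomially-bounded argument to run there. What the paper actually does at the residue level (Lemma~\ref{lem-nontrivialbound}) is use the nontriviality of the residue derivation plus the thin-set Proposition~\ref{prop-nondefinable} to find a point $u\in\cO^{\times}$ where $F(\jet^r u)$ is not too small; this is a dimension count, not a Hensel-type lift, and it is one lemma inside the proof of Theorem~\ref{thm-main1}, not the whole realization step. Moreover the truly general $S(\der)=\{0\}$ case is reduced to this residue-nontrivial case by a coarsening argument (Lemma~\ref{lem-approxbound}), which is what lets (ii) be handled by the same machinery. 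Your remark that you expect (ii) to be cleaner than (i) is also a symptom of not seeing the common reduction: in the paper both parts are equal-effort consequences of the same theorem, once $S(\der)=\{0\}$ is observed and the strict/small-derivation and strict/asymptotic equivalences are in hand.
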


\noindent
In both cases (i) and (ii) above, the spherically complete immediate extensions of $K$ are necessarily strict; this is readily verified in case (i) and it follows from~\cite[Lemma 1.11]{ADH18B} in case (ii). Theorems~\ref{thm-polystrict} and~\ref{thm-resasymp} both follow from something a bit more general. Before stating this, we recall some definitions from~\cite{ADH18B}. Let $v\colon K^\times \to \Gamma$ be the (surjective) Krull valuation corresponding to $\cO$ and set
\[
\Gamma(\der)\ \coloneqq\ \big\{v(\phi) :\phi \in K^\times \text{ and } \der\smallo\subseteq \phi\smallo\big\},\qquad S(\der)\ \coloneqq\ \big\{\gamma\in \Gamma:\Gamma(\der)+\gamma = \Gamma(\der)\}.
\]

\begin{theoremA}[Theorem~\ref{thm-main2}]
\label{thm-mainthm}
Suppose that $T$ is power bounded with field of exponents $\Lambda$ and that $S(\der)$ is a $\Lambda$-subspace of $\Gamma$. Then $K$ has an immediate strict $T$-convex $T$-differential field extension which is spherically complete.
\end{theoremA}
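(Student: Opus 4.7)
The plan is to adapt the Kaplansky-style construction of maximal immediate extensions via pseudo-Cauchy sequences, following the template from~\cite{ADH18B} for the case $T = \RCF$. By Zorn's lemma, $K$ admits a maximal immediate strict $T$-convex $T$-differential field extension $M$, and a Kaplansky-type argument reduces spherical completeness of $M$ to a pseudolimit lemma: every pc-sequence $(a_\rho)$ in $K$ without a pseudolimit in $K$ acquires one in some strict $T$-convex $T$-differential field extension of $K$ still satisfying the hypotheses of the theorem.

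I would then prove this lemma by splitting into the $\cL$-transcendental and $\cL$-algebraic cases, as classified in~\cite{DL95}. In each case, one first uses the $T$-convex theory of~\cite{DL95} to produce an immediate $T$-convex field extension $K\langle a\rangle$ containing a pseudolimit $a$, and then extends the $T$-derivation via the mechanism of~\cite{FK19}. In the $\cL$-transcendental case, $a$ is $\cL$-transcendental over $K$, so prescribing $\der a$ determines a unique $T$-derivation on $K\langle a\rangle$; one takes $\der a$ to be a pseudolimit of $(\der a_\rho)$, after checking that this sequence is itself pseudo-Cauchy via a continuity estimate. In the $\cL$-algebraic case, one picks a minimal $\cL$-definable $f$ such that $(f(a_\rho))$ pseudo-converges to $0$, produces $a$ with $f(a)=0$, and reads off $\der a = -(\der f)(a)/f^\prime(a)$ from $\der(f(a))=0$, where $\der f$ encodes the action of $\der$ on the parameters of $f$.

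The main obstacle is verifying strictness of the extension, i.e.\ that the implications $\der\smallo\subseteq\phi\smallo\Rightarrow\der_M\smallo_M\subseteq\phi\smallo_M$ and $\der\cO\subseteq\phi\smallo\Rightarrow\der_M\cO_M\subseteq\phi\smallo_M$ hold for every $\phi\in K^\times$. To check these on a general $b = f(a)\in K\langle a\rangle$, one expands $\der b = (\der f)(a) + f^\prime(a)\cdot\der a$ by Taylor. Power boundedness of $T$ controls $v(f^\prime(a))$ and the higher-order remainders by $\Lambda$-powers of $v(a-a_\rho)$, while the $\Lambda$-subspace hypothesis on $S(\der)$ is exactly what keeps the resulting value-group shifts inside $\Gamma(\der)$, so that strictness passes from $K$ to $K\langle a\rangle$. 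This closes the pseudolimit lemma and yields the theorem.
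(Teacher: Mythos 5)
Your Zorn-plus-pseudolimit-lemma strategy matches the paper's treatment of the special case $S(\der)=\{0\}$ (Theorem~\ref{thm-main1}), but the proposal goes wrong in two places. First, the case split is not the right one: in any proper immediate $\TO$-extension $K\langle a\rangle$, the element $a$ is automatically $\cL$-transcendental over $K$ (a unary $\cL(K)$-definable $f$ with $f(a)=0$ and $f$ not identically zero near $a$ would put $a\in\dclL(K)=K$ by o-minimality), so the ``$\cL$-algebraic case'' as you describe it is vacuous and the recipe $\der a = -(\der f)(a)/f'(a)$ never has occasion to be applied. The genuine dichotomy concerns whether the \emph{jet} $\jet^r(\ell)$ eventually satisfies an $\cL(K)$-algebraic relation — that is, whether the iterated derivatives fold back to finite $\cL$-rank — and this is detected not by an $\cL(K)$-definable condition on $\ell$ alone but by the vanishing set $Z(K,\ell)$ of Section~\ref{sec-vanishing}. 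Handling the nonempty case correctly requires the eventual-smallness notion and the valuation estimates on $F(\jet^r u)$ from Section~\ref{sec-behaviour}, which replace the Newton-degree tools of~\cite{ADH18B}; these are exactly what lets one choose $a_r' = I_F(\jet^r a)$ and verify strictness in Proposition~\ref{prop-zerosetisnonempty}. Your ``Taylor expansion'' points at Fact~\ref{fact-basicTderivation2}, which is a correct ingredient, but it does not by itself control the terms coming from $I_F$ or explain why compositional conjugation makes them small.

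Second, you misidentify what the $\Lambda$-subspace hypothesis buys you. You invoke it inside the strictness check, to ``keep value-group shifts inside $\Gamma(\der)$,'' but all the strictness analysis in the paper is carried out in the $S(\der)=\{0\}$ case, where this hypothesis is not used at all. Its only role is to license \emph{coarsening}: one sets $\Delta := S(\der)$ and passes to $K_\Delta$, and Lemma~\ref{lem-CoarseTconvex} needs $\Delta$ to be a convex $\Lambda$-subspace for $\dot{\cO}$ to be $T$-convex. After coarsening, $S_{K_\Delta}(\der)=\{0\}$, the derivation on $\res(K_\Delta)$ is trivial, Theorem~\ref{thm-main1} applies, and one un-coarsens via $M^*$ as in Subsection~\ref{subsec-TOcoarsening}. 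Without this reduction, your pseudolimit lemma has no way to address $S(\der)\neq\{0\}$: in that regime the conjugates $F^\phi$ with $v\phi\in\Gamma(\der)$ need never become small (Lemma~\ref{lem-approxbound} explicitly requires $S(\der)=\{0\}$), so the estimates your sketch relies on would fail.
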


\noindent
The condition that $S(\der)$ is a $\Lambda$-subspace of $\Gamma$ is satisfied when $\Lambda$ is archimedean (that is, when $T$ is polynomially bounded). It is also satisfied when $S(\der) = \{0\}$, a situation which arises when $K$ is asymptotic or when $K$ has small derivation and the induced derivation on the residue field of $K$ is nontrivial. It is unclear whether $S(\der)$ is always a $\Lambda$-subspace of $\Gamma$, or whether this assumption is necessary.

\medskip\noindent
The assumption of power boundedness, on the other hand, we know to be necessary. If $T$ is not power bounded and $\cO \neq K$, then $K$ has no $T$-convex extension which is spherically complete by Miller's dichotomy~\cite{Mi96} and a negative result of Kuhlmann, Kuhlmann, and Shelah~\cite{KKS97}, established independently by van der Hoeven~\cite[Proposition 2.2]{vdH97}. See Remark~\ref{rem-noimmediateext} below for details.

\medskip\noindent
Of course, if $\cO = K$, then $K$ is itself spherically complete and Theorem~\ref{thm-main2} holds vacuously (even without the assumption of power boundedness). Thus, we assume throughout the article that $\cO$ is a \emph{proper} $T$-convex subring of $K$.
\subsection*{Outline}
In~\cite{ADH18B}, Aschenbrenner, van den Dries, and van der Hoeven proved that every equicharacteristic zero valued field with a continuous derivation has an immediate strict valued differential field extension which is spherically complete. Our proof of Theorem~\ref{thm-mainthm} is similar in structure to the proof of this result. As in~\cite{ADH18B}, we first handle the case $S(\der) = \{0\}$, and then we reduce to this case using a coarsening argument. However, since we work with $\cL(K)$-definable functions instead of differential polynomials over $K$, many of the key tools from~\cite{ADH18B} are not available to us. Below is an outline of our strategy.

\medskip\noindent
Suppose $T$ is power bounded and let $K$ be a $T$-convex $T$-differential field. Assume that $K$ is not spherically complete, so there is a collection $\cB$ of sets of the form $\big\{y \in K: v(y-a)\geq\gamma\big\}$ such that any two sets in $\cB$ intersect, but the intersection $\bigcap\cB$ is empty. Compactness allows us to find an element $\ell$ in a strict $T$-convex $T$-differential field extension of $K$ such that $\ell$ lies in (the natural extension of) every set in $\cB$. There are two possibilities:
\begin{enumerate} 
\item[(1)] There is some element $a$ realizing the same cut as $\ell$ in some strict $T$-convex $T$-differential field extension of $K$ and some $r$ such that $a^{(r)}$ is in the $\cL$-definable closure of $K\cup \{a,a',\ldots,a^{(r-1)}\}$, where $a'\coloneqq\der a$ and $a^{(r)}\coloneqq\der^r a$.
\item[(2)] There is no such element in any strict $T$-convex $T$-differential field extension of $K$.
\end{enumerate}
In the first case, we are tasked with showing that $K\langle a,a',\ldots\rangle$ is an immediate extension of $K$, where $K\langle a,a',\ldots\rangle$ is the $\cL$-definable closure of $K \cup \{a,a',\ldots\}$. In the second case, we need to show that $K\langle \ell,\ell',\ldots\rangle$ is an immediate extension of $K$.

\medskip\noindent
First, consider the simplest situation: assume that $K$ has small derivation and that the induced derivation on the residue field of $K$ is nontrivial. Suppose we are in case (1), and take an $\cL(K)$-definable function $I_F$ such that $a^{(r)}=I_F(a,\ldots,a^{(r-1)})$. Set
\[
F(y,\ldots,y^{(r)})\ \coloneqq\ y^{(r)}-I_F(y,\ldots,y^{(r-1)}),
\]
so $F(a,\ldots,a^{(r)}) = 0$. We look at how $F$ behaves on arbitrarily small neighborhoods of $\ell$ via \emph{affine conjugation}: we replace $\ell$ with $d\inv(\ell-b)$ for $b\in K$ and $d \in K^\times$ with $v(\ell-b)>vd$, and we adjust $F$ accordingly. After affine conjugation, we get that $\ell \in \smallo$, so $y \in \smallo$ for $y \in K$ sufficiently close to $\ell$. Since any such $y \in K$ is also close to $a$, we should expect that $F(y,\ldots,y^{(r)}) \in \smallo$ for these $y$. If this happens for all sufficiently small $d$, then we say that $F$ \emph{vanishes at $(K,\ell)$}. 

\medskip\noindent
Vanishing turns out to be the dividing line between cases (1) and (2). If there is no function $F$ which vanishes at $(K,\ell)$, then we must be in case (2), and a somewhat technical inductive argument (Proposition~\ref{prop-imsim}) can be used to show that $vG(y,\ldots,y^{(n)})= vG(\ell,\ldots,\ell^{(n)})$ for all $\cL(K)$-definable functions $G$ and all $y \in K$ sufficiently close to $\ell$. From this, we see that $K\langle \ell,\ell',\ldots\rangle$ is an immediate strict extension of $K$; see Proposition~\ref{prop-zerosetisempty}. On the other hand, suppose that $F$ is a function of the above form which vanishes at $(K,\ell)$. The same argument as in the nonvanishing case allows us to find an immediate $T$-convex $T$-differential field extension $K\langle a,a',\ldots\rangle$ of $K$ with $F(a,\ldots,a^{(r)}) = 0$. If, towards contradiction, this extension is not strict, then one can show that $vF(y,\ldots,y^{(r)})=vF(z,\ldots,z^{(r)})$ for all $y,z \in K$ sufficiently close to $\ell$ (this proof by contradiction is carried out in the proof of Proposition~\ref{prop-zerosetisnonempty}). Thus, we need to prove that for any $y \in K$, there is $z \in K$ with $v(\ell-z)>v(\ell- y)$ such that $vF(y,\ldots,y^{(r)})\neq vF(z,\ldots,z^{(r)})$. After a suitable affine transformation and rescaling, this reduces to showing that there are $y,z \in \cO$ with $F(y,\ldots,y^{(r)})\in \smallo$ and $F(z,\ldots,z^{(r)})\not\in \smallo$. We let
\[
S\ \coloneqq \ \big\{y \in \cO: F(y,\ldots,y^{(r)})\in \smallo\big\}.
\]
Our assumption that $F$ vanishes at $(K,\ell)$ guarantees that $S$ is nonempty. To see that $\cO \setminus S$ is nonempty, let $\bar{S}$ be the image of $S$ in the differential residue field $\res(K)$, and note that $\bar{S}$ is \emph{thin} (it is controlled by an $\cL$-definable subset of the residue field with empty interior; see Subsection~\ref{subsec-thin} for a precise definition). In Proposition~\ref{prop-nondefinable} below, we show that any model of $T$ equipped with a nontrivial $T$-derivation is not thin, so our assumption that $\res(K)$ has nontrivial derivation allows us to conclude that $\bar{S} \neq \res(K)$. This gives that $\cO \setminus S$ is nonempty as desired; see Lemma~\ref{lem-nontrivialbound} for the full argument.

\medskip\noindent
We now drop the assumption that the derivation on the residue field is nontrivial, but we still assume that $S(\der) = \{0\}$. This is the setting for most of this article, and Theorem~\ref{thm-mainthm} is first proven under this assumption in Theorem~\ref{thm-main1} below. In this setting, we can approximate the previous case by \emph{compositionally conjugating}, that is, by replacing $\der$ with $\phi\inv\der$ for $\phi \in K^\times$ with $v(\phi)\in \Gamma(\der)$; see Subsection~\ref{subsec-compconj}. As $v(\phi)$ grows larger, the compositional conjugate $K^\phi$ behaves more and more like a $T$-convex $T$-differential field with small derivation and nontrivial differential residue field. This method of studying \emph{eventual behavior} via compositional conjugation was developed in~\cite{ADH17} and generalized in~\cite{ADH18B} to study immediate strict extensions of valued differential fields. Our definition of vanishing needs to be altered slightly; we now require that $F$ vanish at $(K,\ell)$ after compositionally conjugating by $\phi \in K^\times$ with $v(\phi)\in \Gamma(\der)$ sufficiently large.

\medskip\noindent
Even with this more general definition of vanishing, we can still show that if no function vanishes at $(K,\ell)$, then $vG(y,\ldots,y^{(n)})=vG(\ell,\ldots,\ell^{(n)})$ for all $G$ and all $y \in K$ sufficiently close to $\ell$, so $K\langle \ell,\ell',\ldots\rangle$ is an immediate strict extension of $K$. We can also show that if $F$ vanishes at $(K,\ell)$, then $K$ has an immediate $T$-convex $T$-differential field extension $K\langle a,a',\ldots\rangle$ with $F(a,\ldots,a^{(r)}) = 0$. As above, proving that this extension is strict reduces to showing that for any $y \in K$, there is $z \in K$ with $v(\ell-z)>v(\ell- y)$ such that $vF(y,\ldots,y^{(r)})\neq vF(z,\ldots,z^{(r)})$. Though we can no longer apply our thin set argument directly, we can use our assumption that $S(\der) = \{0\}$ and a compactness argument to find an elementary extension $M$ of $K$ which, after compositional conjugation and coarsening, has nontrivial differential residue field. We can then apply the thin set argument (Lemma~\ref{lem-nontrivialbound}) to $M$ to find $u \in K$ such that $vF(u,\ldots,u^{(r)})$ is sufficiently small. This is all done in the proof of Lemma~\ref{lem-approxbound}. Going from the conclusion of Lemma~\ref{lem-approxbound} to the existence of $y$ and $z$ as above involves a few technical steps, which are carried out in Corollary~\ref{cor-sameval} and Lemmas~\ref{lem-differentvals} and~\ref{lem-differentclosevals}. 

\medskip\noindent
In the final section, we further relax our assumptions to match the statement of Theorem~\ref{thm-mainthm}. That is, we assume only that $\Delta \coloneqq S(\der)$ is a $\Lambda$-subspace of $\Gamma$. Following~\cite[Section 6]{ADH18B}, we arrange that $K$ has small derivation using compositional conjugation, and we decompose $K$ into two structures. The first is the \emph{$\Delta$-coarsening of $K$}, denoted $K_\Delta$, which has the same underlying $T$-differential field as $K$ but now has $T$-convex valuation ring
\[
\cO_{K_\Delta}\ \coloneqq \ \big\{y \in K: v(y)> \gamma\text{ for some }\gamma \in \Delta\big\}.
\]
The second is the \emph{$\Delta$-specialization of $K$}, which is the residue field $\res(K_\Delta)$ of the $\Delta$-coarsening of $K$, considered as a $T$-convex $T$-differential field in its own right. As it turns out, $S_{K_\Delta}(\der) = \{0\}$ and the derivation on $\res(K_\Delta)$ is trivial. Our result in the case when $S(\der) = \{0\}$ (Theorem~\ref{thm-main1}) gives us a spherically complete immediate strict extension of $K_\Delta$. Corollary~\ref{cor-immediateext} provides a spherically complete immediate extension of $\res(K_\Delta)$, and we equip this extension with the trivial derivation, which makes it a strict extension of $\res(K_\Delta)$. Using a result on residue field extensions (Corollary~\ref{cor-resext}), we can glue these two extensions together to find a spherically complete immediate strict extension of $K$. This gluing is done in the proof of Theorem~\ref{thm-main2}.

\subsection*{Acknowledgements}
Research for this article was conducted at the University of Illinois Urbana-Champaign, and the results first appeared in my PhD thesis~\cite{Ka21}. I would like to thank the referees for useful comments, Allen Gehret and Erik Walsberg for helpful conversations, and especially Nigel Pynn-Coates and Lou van den Dries for providing feedback on earlier drafts of this article. 

\subsection*{Notation and conventions}\label{subsec-notation}
In this article, we always use $k$, $m$, $n$, $p$, $q$, and $r$ to denote elements of $\N =\{0,1,2,\ldots\}$. By ``ordered set'' we mean ``totally ordered set.'' Let $S$ be an ordered set, let $a \in S$, and let $A \subseteq S$. We set $S^{>a} \coloneqq \{s\in S:s>a\}$; similarly for $S^{\geq a}$, $S^{<a}$, $S^{\leq a}$, and $S^{\neq a}$. We write ``$a>A$'' (respectively ``$a<A$'') if $a$ is greater (less) than each $s \in A$. For $b \in S^{>a}$, we put
\[
[a,b]_A\ \coloneqq \ \{s \in A:a\leq s\leq b\}.
\]
If $A = S$, then we drop the subscript and write $[a,b]$ instead. A \textbf{cut} in $S$ is just a downward closed subset of $S$. If $A$ is a cut in $S$ and $y$ is an element in an ordered set extending $S$, then we say that \textbf{$y$ realizes the cut $A$} if $A < y < S\setminus A$. If $\Gamma$ is an ordered abelian group, then we set $\Gamma^> \coloneqq \Gamma^{>0}$, and we define $\Gamma^{\geq}$, $\Gamma^<$, $\Gamma^{\leq}$, and $\Gamma^{\neq}$ analogously. If $R$ is a ring, then we let $R^\times$ denote the multiplicative group of units in $R$.

\medskip\noindent
We always use $K$, $L$, and $M$ for models of $T$ (or expansions thereof). We regard $K^0$ as the one-point space $\{0\}$, and we identify each nullary map $F\colon K^0\to K^n$ with its value $F(0) \in K^n$. Let $A \subseteq K$ and let $D\subseteq K^n$. We say that $D$ is \textbf{$\cL(A)$-definable} if
\[
D\ =\ \varphi(K) \ \coloneqq \ \big\{y \in K^n:K \models \varphi(y)\big\}
\]
for some $\cL(A)$-formula $\varphi(y)$. If $D\subseteq K^n$ is $\cL(A)$-definable, then we let $\dimL(D)$ denote the o-minimal dimension of $D$. Let $k \leq n$. We denote the projection of $D$ onto the first $k$ coordinates by $\pi_k(D)$ and for $y \in K^k$, we set $D_y \coloneqq \big\{z \in K^{n-k}:(y,z) \in D\big\}$.

\medskip\noindent
Let $U \subseteq K^n$ and let $F\colon U \to K$ be a function. We let $\Graph(F)\subseteq K^{n+1}$ denote the graph of $F$, and we say that $F$ is $\cL(A)$-definable if $\Graph(F)$ is. Note that the domain of an $\cL(A)$-definable function is $\cL(A)$-definable. If $F$ is $\cC^1$ and $U$ is open, then we let $\nabla F$ denote the gradient
\[
\nabla F\ \coloneqq \ \left(\frac{\partial F}{\partial Y_1},\ldots,\frac{\partial F}{\partial Y_n}\right),
\]
viewed as an $\cL(K)$-definable map from $U$ to $K^n$. If $n = 1$, then we write $F'$ instead of $\nabla F$.

\medskip\noindent
For $A \subseteq K$, we let $\dclL(A)$ be the $\cL$-definable closure of $A$ (in $K$, implicitly, but this doesn't change if we pass to elementary extensions of $K$). If $b \in \dclL(A)$, then $b = F(a)$ for some $\cL(\emptyset)$-definable function $F$ and some tuple $a$ from $A$. It is well-known that $(K,\dclL)$ is a pregeometry. A set $B \subseteq K$ is said to be \textbf{$\cL(A)$-independent} if $b\not\in \dclL\big(A\cup(B \setminus \{b\})\big)$ for all $b \in B$. A tuple $(a_i)_{i \in I}$ is said to be $\cL(A)$-independent if its set of components $\{a_i:i \in I\}$ is $\cL(A)$-independent and no components are repeated. 

\medskip\noindent
Let $M$ be a $T$-extension of $K$, that is, a model of $T$ which contains $K$ as an $\cL$-substructure. Given an $\cL(K)$-definable set $D \subseteq K^n$, we let $D^M$ denote the subset of $M^n$ defined by the same $\cL(K)$-formula as $D$. We sometimes refer to $D^M$ as the \textbf{natural extension of $D$ to $M$}. Since $T$ is assumed to be model complete, this natural extension does not depend on the choice of defining formula. If $F\colon U \to K^m$ is an $\cL(K)$-definable function, then we let $F^M\colon U^M\to M$ be the $\cL(K)$-definable map with graph $\Graph(F^M) = \Graph(F)^M$. We often drop the superscript for definable maps and just write $F\colon U^M\to M$. 

\medskip\noindent
Let $A \subseteq M$. We let $K\langle A\rangle$ denote the $\cL$-substructure of $M$ with underlying set $\dclL(K\cup A)$. If $A = \{a_1,\ldots,a_n\}$, we write $K\langle a_1,\ldots,a_n\rangle$ instead of $K\langle A \rangle$. Since $T$ has definable Skolem functions, $K\langle A \rangle$ is an elementary $\cL$-substructure of $M$. If $A$ is $\cL(K)$-independent and $M = K\langle A \rangle$, then $A$ is called a \textbf{basis for $M$ over $K$}. The \textbf{rank of $M$ over $K$}, denoted $\rkL(M|K)$, is the cardinality of a basis for $M$ over $K$ (this doesn't depend on the choice of basis). We say that $M$ is a \textbf{simple extension} of $K$ if $\rkL(M|K) = 1$. Then $M = K\langle a\rangle$ for some $a \in M\setminus K$.

\medskip\noindent
Let $\cL^*\supseteq \cL$, let $T^*$ be an $\cL^*$-theory extending $T$, and let $K \models T^*$. We use the same conventions for $\cL^*$-definability as we do for $\cL$-definability. A \textbf{$T^*$-extension of $K$} is a model $M \models T^*$ which contains $K$ as an $\cL^*$-substructure. If $M$ is an \emph{elementary} $T^*$-extension of $K$ and $D \subseteq K^n$ is $\cL^*(K)$-definable, then we let $D^M$ denote the subset of $M^n$ defined by the same formula as $D$.

\section{$T$-convex valuation rings}\label{sec-Tconvex}
\noindent
The fundamentals of valuation theory on o-minimal fields were established by van den Dries and Lewenberg in~\cite{DL95}. In this section, we set up valuation theoretic notation, recall some important results, and establish lemmas for later use. 

\medskip\noindent
Following~\cite{DL95}, we say that a nonempty convex set $\cO \subseteq K$ is a \textbf{$T$-convex valuation ring of $K$} if $F(\cO) \subseteq \cO$ for all $\cL(\emptyset)$-definable continuous functions $F\colon K \to K$. Let $\LO \coloneqq \cL\cup \{\cO\}$ be the extension of $\cL$ by a unary predicate $\cO$ and let $\TO$ be the $\LO$-theory which extends $T$ by axioms asserting that $\cO$ is a \emph{proper} $T$-convex valuation ring. 

\begin{fact}[\cite{DL95}, Corollary 3.13]
\label{fact-completeTconvex}
The theory $\TO$ is complete and model complete.
\end{fact}

\noindent
For the rest of this section, let $K = (K,\cO) \models \TO$ (so $K$ is now an $\LO$-structure). The following is an easy consequence of the o-minimal monotonicity theorem:

\begin{fact}
\label{fact-hull}
The convex hull of an elementary $\cL$-substructure of $K$ is a $T$-convex valuation ring of $K$. 
\end{fact}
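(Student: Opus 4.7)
The convex hull $\cO$ of $K_0 \preceq K$ is by construction a nonempty convex subset of $K$, so the only nontrivial thing to check is that $F(\cO) \subseteq \cO$ for every $\cL(\emptyset)$-definable continuous $F\colon K \to K$. The plan is to bound $|F(x)|$, for $x \in \cO$, by an element of $K_0$, and then invoke elementarity.

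Let $F$ be as above, fix $x \in \cO$, and pick $a \in K_0^{\geq 0}$ with $|x| \leq a$. By the o-minimal monotonicity theorem applied to $F$ restricted to $[-a,a]$, this interval admits a finite partition into subintervals on which $F$ is continuous and either strictly monotone or constant. Consequently $|F|$ attains its maximum on $[-a,a]$ at one of the finitely many endpoints, so the quantity
\[
g(a)\ :=\ \max\bigl\{|F(s)| : s \in [-a,a]\bigr\}
\]
exists in $K$. Since $F$ is $\cL(\emptyset)$-definable, the assignment $t \mapsto g(t)$ is $\cL(\emptyset)$-definable on $K^{>0}$, so $g(a)$ is $\cL(\{a\})$-definable in $K$. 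Because $a \in K_0$ and $K_0 \preceq K$, this forces $g(a) \in K_0$. Hence $|F(x)| \leq g(a) \in K_0$, so $F(x) \in \cO$.

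There is essentially no obstacle here beyond invoking the monotonicity theorem; the whole argument rests on transferring the $K$-definable uniform bound $g(a)$ back into $K_0$ via elementarity. One could equivalently appeal to the standard o-minimal fact that definable continuous functions on closed bounded intervals attain their sup and inf, but the monotonicity theorem is enough and is what the author evidently has in mind.
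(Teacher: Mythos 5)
Your proof is correct and follows exactly the approach the paper indicates — the paper gives no written proof, only the remark that the fact is "an easy consequence of the o-minimal monotonicity theorem," and your argument (bound $|F|$ on $[-a,a]$ by the definable maximum $g(a)$, then use elementarity of $K_0$ to get $g(a)\in K_0$) is the natural realization of that hint.
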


\noindent
Our primary reference for valuation theory is~\cite[Chapter 3]{ADH17}, and we make use of the notation used there. Let $\smallo$ denote the unique maximal ideal of $\cO$, and let $\Gamma$ denote the \textbf{value group} of $(K,\cO)$, with (surjective) Krull valuation $v\colon K^\times\to \Gamma$. We set $\Gamma_\infty \coloneqq \Gamma\cup\{\infty\}$, and we extend $v$ to all of $K$ by setting $v(0) \coloneqq \infty$. For $a,b \in K$, we set $a \preceq b :\Longleftrightarrow va\geq vb$. We define $a\prec b$ and $a \asymp b$ similarly, and we set $a\sim b:\Longleftrightarrow a-b \prec a$. Then $\sim$ is an equivalence relation on $K^\times$ and if $a\sim b$, then $a\asymp b$ and $a$ is positive if and only if $b$ is.

\medskip\noindent
We denote the \textbf{residue field of $K$} by $\res(K)$, and we let $a\mapsto \bar{a}$ denote the residue map $\cO \to \res(K)$. Under this map, $\res(K)$ admits a natural expansion to a $T$-model; see~\cite[Remark 2.16]{DL95} for details. A \textbf{lift of $\res(K)$} is an elementary $\cL$-substructure $\k$ of $K$ contained in $\cO$ such that the map $a\mapsto\bar{a}\colon\k\to \res(K)$ is an $\cL$-isomorphism. By~\cite[Theorem 2.12]{DL95}, we can always find a lift of $\res(K)$. For $a = (a_1,\ldots,a_n) \in \cO^n$, we let $\bar{a} \coloneqq (\bar{a}_1,\ldots,\bar{a}_n) \in \res(K)^n$, and for $D \subseteq K^n$, we let
\[
\overbar{D}\ \coloneqq \ \{\bar{a}:a \in D\cap \cO^n\}\ \subseteq\ \res(K)^n.
\]

\begin{fact}[\cite{vdD97}, Proposition 1.10]
\label{fact-resdef}
If $D\subseteq K^n$ is $\cL(K)$-definable, then $\overbar{D}$ is $\cL(\res K)$-definable and $\dimL\overbar{D} \leq \dimL D$.
\end{fact}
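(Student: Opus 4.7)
The plan is to proceed by induction on $n$ using o-minimal cell decomposition, with the key ingredient being the interaction of $\cL(K)$-definable continuous functions with the $T$-convex valuation ring $\cO$. Fix a lift $\k \preceq K$ of $\res(K)$ (which exists by~\cite[Theorem 2.12]{DL95}); via the $\cL$-isomorphism $a \mapsto \bar{a}: \k \to \res(K)$, the conclusion is equivalent to $\cL(\k)$-definability of $\overbar{D}$ (transported to $\k^n$) of dimension at most $\dim_\cL D$.

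For $n = 0$ the statement is immediate. For $n \geq 1$, fix a cell decomposition of $K^n$ compatible with $D$. Since $\overbar{(\cdot)}$ commutes with finite unions and both $\cL(\k)$-definability and the dimension bound are preserved under finite union, it suffices to handle a single cell $C \subseteq D$. Let $C' \subseteq K^{n-1}$ denote its projection; by the inductive hypothesis, $\overbar{C'}$ is $\cL(\k)$-definable with $\dim_\cL \overbar{C'} \leq \dim_\cL C'$. Then $C$ is either the graph $\Graph(f)$ of a continuous $\cL(K)$-definable $f : C' \to K$ (so $\dim_\cL C = \dim_\cL C'$), or an open band over $C'$ bounded above and below by two continuous $\cL(K)$-definable functions (possibly $\pm\infty$), with $\dim_\cL C = \dim_\cL C' + 1$.

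The heart of the argument is the following refinement lemma: for any continuous $\cL(K)$-definable $f : C' \to K$, one can further decompose $C'$ into finitely many subcells such that on each subcell $C''$, either $f(x) \notin \cO$ for every $x \in C'' \cap \cO^{n-1}$ (in which case that subcell contributes nothing to $\overbar{\Graph(f)}$), or $f(x) \in \cO$ for every such $x$ and the induced map $\bar{f} : \overbar{C''} \to \k$ given by $\bar{x} \mapsto \overline{f(x)}$ is well-defined and $\cL(\k)$-definable. Granting this, for a graph cell $\overbar{C}$ is a finite union of graphs of $\cL(\k)$-definable functions over subsets $\overbar{C''}$ of dimension at most $\dim_\cL C'' \leq \dim_\cL C'$, giving $\dim_\cL \overbar{C} \leq \dim_\cL C$. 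For a band cell, applying the refinement to both bounding functions and analyzing each combination of valuative behaviors (finite vs.\ infinite on each side, with the interior either intersecting $\cO$ or not) expresses $\overbar{C}$ as a finite union of $\cL(\k)$-definable pieces of dimension at most $\dim_\cL \overbar{C'} + 1 \leq \dim_\cL C$.

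The main obstacle is the refinement lemma, which is essentially a stable embeddedness statement for $\res(K)$ in the $\LO$-expansion $(K, \cO)$. One route is to use the $T$-convex cell decomposition machinery of~\cite{DL95} to arrange that the valuative type of $f$ is uniform on each subcell, and then to invoke completeness of $T^{\cO}$ (once residue field and value group are specified) to transfer $\cL(\k)$-definability to the residue map $\bar{f}$. An alternative is to invoke a quantifier-elimination result for $T^{\cO}$ relative to the residue sort, from which stable embeddedness of $\res(K)$ and hence the lemma follow directly.
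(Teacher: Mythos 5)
Note first that the paper does not prove this statement; it is cited directly from~\cite{vdD97} (Proposition 1.10 there), so there is no in-paper argument for you to match. Your cell-decomposition strategy is a natural first idea, but as written it has two gaps that are genuine and not easily patched.

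The refinement lemma is false for $\cL(K)$-definable subcells. Take $n=2$, $C' = K^{>0}$, and $f(x) = 1/x$: for $x \in \cO^{>0}$ one has $f(x) \in \cO$ exactly when $x \asymp 1$, and since the cut determined by $\smallo^{>0}$ is not realized in $K$, every finite partition of $K^{>0}$ into $\cL(K)$-definable cells has a piece whose $\cO$-points contain both units and elements of $\smallo$, so the valuative behavior of $f$ cannot be made uniform on the pieces. If instead you allow the $C''$ to be $\LO(K)$-definable, the induction collapses: the inductive hypothesis covers only $\cL(K)$-definable sets, and strengthening it to $\LO(K)$-definable sets is precisely the full generality of \cite[1.10]{vdD97} while also losing access to ordinary o-minimal cell decomposition, since $(K,\cO)$ is not o-minimal. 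Separately, even when $f$ maps the $\cO$-points of $C''$ into $\cO$, the ``induced map'' $\bar{f}$ need not be well-defined, so $\overbar{\Graph(f)}$ need not decompose into graphs: with $\epsilon \in \smallo^{>0}$, $C' = (0,\epsilon)$, and $f(x)=x/\epsilon$, one has $\overbar{C'} = \{0\}$ but $\overbar{\Graph(f)} = \{0\}\times[0,1]_{\res K}$, which is one-dimensional; the base collapses and the fiber expands. The dimension bound therefore cannot be read off from ``graphs over $\overbar{C''}$'' and needs a more delicate accounting of this trade-off. Your fallback route (stable embeddedness of $\res(K)$, or relative quantifier elimination for $\TO$) would give $\cL(\res K)$-definability of $\overbar{D}$, but that is at least as deep as the Fact itself and, on its own, still leaves the dimension inequality unproved.
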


\noindent
Let $M$ be a $\TO$-extension of $K$ with $T$-convex valuation ring $\cO_M$. We view $\Gamma$ as a subgroup of $\Gamma_M$ and $\res (K)$ as an $\cL$-substructure of $\res(M)$ in the obvious way. We let $v$ and $x \mapsto \bar{x}$ denote their extensions to functions $M^\times \to \Gamma_M$ and $\cO_M\to \res(M)$. 

\medskip\noindent
Let $\ell$ be an element in a $\TO$-extension of $K$ and suppose that the set
\[
v(\ell-K)\ \coloneqq \ \big\{v(\ell-y):y \in K\big\}
\]
is contained in $\Gamma$ and has no largest element. Let $M$ be a $\TO$-extension of $K$ and let $\eta \in v(\ell- K)$. An element $y \in M$ is said to be \textbf{$\eta$-close to $\ell$} if there is $a \in K$ with $v(\ell-a), v(y-a) > \eta$. Note that $M$ is not assumed to contain $\ell$. A property is said to hold \textbf{for all $y \in M$ sufficiently close to $\ell$} if there exists $\eta\in v(\ell-K)$ such that for all $y \in M$ which are $\eta$-close to $\ell$, the property holds. This terminology will most frequently be used in the case $M = K$, but we allow $M \neq K$ for additional flexibility.

\medskip\noindent
The following Lemma on simple residue field extensions will be used in Section~\ref{sec-coarsening}:

\begin{lemma}\label{lem-smallderivres}
Let $M=K\langle a \rangle$ be a simple $\TO$-extension of $K$ with $\Gamma_M = \Gamma$, $a\asymp 1$, and $\bar{a} \not\in \res(K)$. Let $F\colon K\to K$ be an $\cL(K)$-definable function. Then $F'(a) \preceq F(a)$.
\end{lemma}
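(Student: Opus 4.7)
The plan is to argue by contradiction after a normalization. Let $I \subseteq K$ be the interval on which $F$ is $\cC^1$ and whose natural extension $I^M$ contains $a$; such an $I$ exists because $K$ decomposes into finitely many intervals of $\cC^1$-ness for $F$ and $a$ lies in the $M$-extension of one of them. If $F$ vanishes identically on $I$, then $F'(a) = F(a) = 0$ and we are done; otherwise the zero set of $F|_I$ is a finite $\cL(K)$-definable subset of $K$ not containing $a$, so $F(a) \neq 0$. In the latter case, since $\Gamma_M = \Gamma$ we can pick $f \in K^\times$ with $v(f) = v(F(a))$; replacing $F$ by $F/f$ (which commutes with differentiation since $f$ is constant) reduces us to the case $F(a) \asymp 1$, with the new goal being $F'(a) \preceq 1$.

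Now suppose for contradiction that $F'(a) \succ 1$, and consider the $\LO(K)$-definable sets
\[
R \ := \ \{y \in I : F(y) \in \cO\}, \qquad R' \ := \ \{y \in I : F'(y) \notin \cO\}.
\]
By construction $a \in R^M \cap R'^M \cap \cO_M$. The key claim is that $R \cap R' \cap \cO$ meets only finitely many residue classes in $\res(K)$. Since $\cO$ is clopen in $K$, both $R$ and $R'$ are clopen in $I$, so $R \cap R' \cap \cO$ decomposes into finitely many convex components $J_1, \ldots, J_n$. On any one $J_i$, the continuous image $F'(J_i)$ is connected and lies in $K \setminus \cO = \{y > \cO\} \sqcup \{y < -\cO\}$, so $|F'(y)| \succ 1$ uniformly throughout $J_i$. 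By the o-minimal mean value theorem, any $y_1 < y_2$ in $J_i$ satisfy
\[
y_2 - y_1 \ = \ \frac{F(y_2) - F(y_1)}{F'(\xi)}
\]
for some $\xi \in (y_1, y_2) \subseteq J_i$; the numerator lies in $\cO$ while $|F'(\xi)| \succ 1$, forcing $y_2 - y_1 \in \smallo$. Thus each $J_i$ has a single residue $s_i \in \res(K)$; choose lifts $c_i \in \cO$ with $\bar c_i = s_i$.

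The $\LO(K)$-sentence
\[
\forall y \, \Bigl( y \in R \cap R' \cap \cO \ \Longrightarrow \ \bigvee_{i=1}^{n} (y - c_i) \in \smallo \Bigr)
\]
holds in $K$, hence in $M$ since $M$ is an elementary $\LO$-extension of $K$ (as noted in the text preceding the lemma). Applying it to $y = a$ yields $a - c_i \in \smallo_M$ for some $i$, so $\bar a = \bar c_i \in \res(K)$, contradicting $\bar a \notin \res(K)$. The principal subtlety lies in the middle paragraph: combining the clopenness of $\cO$ with the o-minimal mean value theorem to confine each connected piece of $R \cap R' \cap \cO$ to a single residue class, and then packaging this finiteness as a single $\LO(K)$-sentence amenable to elementary transfer from $K$ to $M$.
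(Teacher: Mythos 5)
Your proof takes a genuinely different route from the paper's. The paper lifts the residue field to an elementary $\cL$-substructure $\k\subseteq\cO$ (and uses that $\k\langle a\rangle$ lifts $\res(M)$) and then works purely with $\cL$-definable intervals and $\cL$-elementarity, sampling $F$ at points of $I\cap\k$ and applying the mean value theorem there. You instead avoid lifts altogether, work directly with the $\LO(K)$-definable sets $R$, $R'$, and transfer a finiteness statement about residue classes via $\LO$-elementarity. Both reductions (normalizing to $F(a)\asymp 1$, applying the MVT to force $y_2-y_1\in\smallo$, transferring to $M$) are sound, and the overall shape of your contradiction is clean.

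There is, however, a genuine gap in one justification. You write that because $\cO$ is clopen, $R$ and $R'$ are clopen, ``so'' $R\cap R'\cap\cO$ decomposes into finitely many convex components. Clopenness is not sufficient: in a valued field a clopen set can have infinitely many convex components (a disjoint union of infinitely many $v$-balls, for instance). What you actually need is that every $\LO(K)$-definable subset of $K$ is a finite union of convex sets --- that is, the weak o-minimality of $\TO$, which is a nontrivial theorem (and not something the paper assumes or proves). Alternatively, you can derive the required finiteness from o-minimality of $T$ via uniform finiteness: $R$ is the decreasing intersection over $d\in K^{>\cO}$ of the $\cL(K)$-definable sets $\{y\in I:|F(y)|<d\}$, each of which has at most $N$ connected components for a fixed $N$; if $R$ had more than $N$ convex components, one could choose separating points $z_1,\ldots,z_N\notin R$, set $d:=\tfrac{1}{2}\min_j|F(z_j)|>\cO$, and obtain more than $N$ components of $\{|F|<d\}$, a contradiction. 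The same works for $R'$ and for the intersection. As written, though, your proof asserts the finiteness without a valid reason.

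A second, smaller slip: you claim that if $F$ does not vanish identically on $I$ then its zero set in $I$ is finite. In an o-minimal structure the zero set of a definable function on an interval can itself contain an interval. This is easily repaired by shrinking $I$ to a cell in a decomposition compatible with $\{y:F(y)=0\}$, so that $F$ is either identically zero or nowhere zero on $I$; but the claim as stated is false.
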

\begin{proof}
Let $\k\subseteq \cO^\times$ be a lift of $\res(K)$, so $\k\langle a \rangle$ is a lift of $\res(M)$ by~\cite[Lemma 5.1]{DL95}. Using that $\Gamma_M = \Gamma$, take $b \in K^>$ with $F(a)\asymp b$. We need to show that $F'(a) \preceq b$. Since $\Gamma^<$ has no largest element, it suffices to show that $b\inv|F'(a)| < d$ for each $d \in K^>$ with $d \succ 1$. Let such an element $d$ be given. By $\cL$-elementarity, it is enough to show that for any subinterval $I\subseteq K^>$ with $a \in I^M$, there is $y \in I$ with $b\inv|F'(y)|< d$. Let $I$ be such an interval and take an $\cL(\k)$-definable function $G\colon K \to K$ with $b\inv|F(a)|<G(a)$. By shrinking $I$, we arrange that $F$ is $\cC^1$ on $I$ and that $b\inv|F(y)|<G(y)$ for all $y \in I$. As $\bar{a} \in \overbar{I}^{\res(M)}$, we see that $\overbar{I}$ must be infinite, so $I \cap \k$ is infinite. Take $y_1,y_2 \in I\cap \k$ with $y_1<y_2$, so $y_2-y_1 \asymp 1$. Note that $G(y_i)\in \k$, so $b\inv|F(y_i)| < G(y_i) \prec d$ for $i = 1,2$. By the o-minimal mean value theorem, we have
\[
b\inv F'(y)\ =\ \frac{b\inv F(y_2)-b\inv F(y_1)}{y_2-y_1}\ \prec\ d
\]
for some $y \in I$ between $ y_1$ and $ y_2$. In particular, $b\inv|F'(y)|<d$. 
\end{proof}

\subsection{Immediate extensions}
\noindent
In this subsection, let $M$ be a $\TO$-extension of $K$. If $\Gamma_M = \Gamma$ and $\res(M) = \res(K)$, then $M$ is said to be an \textbf{immediate extension of $K$}. Note that $M$ is an immediate extension of $K$ if and only if for all $a \in M^\times$ there is $b \in K^\times$ with $a \sim b$. The next lemma shows that in an immediate extension of $K$, we can approximate $\LO(K)$-definable sets by $\cL(K)$-definable sets.

\begin{lemma}
\label{lem-Lapprox}
Suppose $M$ is an immediate extension of $K$, let $A \subseteq K^n$ be an $\LO(K)$-definable set, and let $a \in A^M$. Then there is an $\cL(K)$-definable cell $D \subseteq A$ with $a \in D^M$.
\end{lemma}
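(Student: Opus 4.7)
The plan is to use the structural description of $\LO(K)$-definable sets that comes out of the theory of $T$-convex valuation rings in~\cite{DL95} (a form of relative quantifier elimination over $\cL$): every $\LO(K)$-definable set in $K^n$ is a Boolean combination of $\cL(K)$-definable sets and sets of the form $\{y\in K^n: F(y)\in\cO\}$ for $\cL(K)$-definable $F\colon K^n\to K$. Putting the defining formula of $A$ in disjunctive normal form and passing to the disjunct whose natural extension to $M$ contains $a$, I may assume
\[
A\ =\ B\cap\bigcap_{i=1}^{k}\{y:F_i(y)\in\cO\}\cap\bigcap_{j=1}^{l}\{y:G_j(y)\notin\cO\}
\]
for some $\cL(K)$-definable $B\subseteq K^n$ and $\cL(K)$-definable $F_i,G_j\colon K^n\to K$.

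Next, I use immediacy to replace each valuation condition by an $\cL(K)$-definable inequality. The key observation is that, since $\Gamma_M=\Gamma$ and $\res(M)=\res(K)$, every nonzero $y\in\cO_M$ is bounded in absolute value by an element of $\cO$: pick $c'\in\cO$ with $v(c')=v(y)$, then pick $c''\in\cO^\times$ with $\overline{y/c'} = \overline{c''}$, giving $|y|\leq|c'|(|c''|+1)\in\cO$. The same argument applied to $\smallo_M$ shows that every nonzero $y\in\smallo_M$ is bounded in absolute value by an element of $\smallo$. Applying this to $F_i(a)\in\cO_M$ yields $c_i\in\cO^{>}$ with $|F_i(a)|\leq c_i$ (take $c_i=1$ if $F_i(a)=0$); applying it to $1/G_j(a)\in\smallo_M$ yields $d_j\in\smallo^{>}$ with $|G_j(a)|\geq 1/d_j$. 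Since $\cO$ is convex, and since $c_i\in\cO$ while $1/d_j\notin\cO$, the set
\[
E\ :=\ B\cap\bigcap_{i=1}^{k}\{y:|F_i(y)|\leq c_i\}\cap\bigcap_{j=1}^l\{y:|G_j(y)|\geq 1/d_j\}
\]
is an $\cL(K)$-definable subset of $A$ with $a\in E^M$.

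To finish, I apply $\cL$-cell decomposition to $E$: its $\cL(K)$-definable cells have natural extensions that partition $E^M$, so $a$ lies in exactly one of them, and I take that cell as $D$. I expect the main obstacle to be the initial structural reduction: one must extract from~\cite{DL95} (or reprove) the precise relative quantifier elimination that puts $A$ in the displayed form. Once that is in hand, the valuation-approximation step and the final cell-decomposition step are routine consequences of the immediacy hypothesis and o-minimality of $T$, respectively.
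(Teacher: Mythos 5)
Your proof is correct and follows essentially the same approach as the paper: invoke the relative quantifier elimination of van den Dries--Lewenberg (after arranging $T$ has quantifier elimination and a universal axiomatization by Skolemization, and noting that their theorem applies to the nontrivially valued case), reduce the $\cO$-conditions to $\cL(K)$-definable inequalities using the immediacy hypotheses $\Gamma_M=\Gamma$ and $\res(M)=\res(K)$, and finish with cell decomposition. The only cosmetic differences are that you pass to a single disjunct whose extension contains $a$ while the paper processes all disjuncts (setting inactive ones to $\emptyset$), and that you derive both approximation bounds via the same ``pick $c'$ by value, then $c''$ by residue'' argument, whereas the paper uses $\res(M)=\res(K)$ alone for the bound in $\cO$ and $\Gamma_M=\Gamma$ alone for the bound outside $\cO$ -- both work. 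You correctly flag the precise QE statement as the main thing to pin down; the paper handles this by extending $\cL$ with symbols for all $\cL(\emptyset)$-definable functions and adding the axiom $\cO\neq K$, having first disposed of the trivially valued case (where $A$ is already $\cL(K)$-definable) separately.
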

\begin{proof}
It suffices to find an $\cL(K)$-definable set $B \subseteq A$ with $a \in B^M$, for then we can replace $B$ with a subcell $D$ in a cell decomposition of $B$. Since $T$ has definable Skolem functions, we may arrange that $T$ has quantifier elimination and a universal axiomatization by extending $\cL$ by function symbols for all $\cL(\emptyset)$-definable functions. Then $\TO$ eliminates quantifiers by~\cite{DL95}, so
\[
A\ =\ \bigcup_{i \leq m}\bigcap_{j \leq k} A_{i,j}
\]
where either $A_{i,j}$ is $\cL(K)$-definable or 
\[
A_{i,j}\ =\ \big\{y \in K^n:F(y) \in \cO\big\}\ \text{ or }\ A_{i,j}\ =\ \big\{y \in K^n:F(y) \not\in \cO\big\}
\]
for some $\cL(K)$-definable function $F\colon K^n\to K$. For each $i\leq m$ and each $j \leq k$, we take an $\cL(K)$-definable set $B_{i,j}\subseteq A_{i,j}$ such that if $a \in A_{i,j}^M$, then $a \in B_{i,j}^M$. We do this as follows.
\begin{enumerate}
\item[(i)] If $A_{i,j}$ is $\cL(K)$-definable, then we set $B_{i,j} \coloneqq A_{i,j}$.
\item[(ii)] Suppose $A_{i,j} = \big\{y \in K^n:F(y) \in \cO\big\}$ for some $\cL(K)$-definable $F$. If $F(a) \not\in \cO_M$, then we set $B_{i,j} \coloneqq \emptyset$. If $F(a) \in \cO_M$, then since $\res(M)=\res(K)$, we may take $u \in K^>$ with $u \asymp 1$ and $|F(a)|<u$. We set 
\[
B_{i,j}\ \coloneqq \ \big\{y \in K^n:|F(y)|<u\big\}.
\]
\item[(iii)] Suppose $A_{i,j} = \big\{y \in K^n:F(y) \not\in \cO\big\}$ for some $\cL(K)$-definable $F$. If $F(a) \in \cO_M$, then we set $B_{i,j} \coloneqq \emptyset$. If $F(a) \not\in \cO_M$, then since $\Gamma_M=\Gamma$, we may take $d \in K^>$ with $d\succ 1$ and $|F(a)|>d$. We set 
\[
B_{i,j}\ \coloneqq \ \big\{y \in K^n:|F(y)|>d\big\}.
\]
\end{enumerate}
Now set 
\[
B\ \coloneqq \ \bigcup_{i \leq m}\bigcap_{j \leq k} B_{i,j}.\qedhere
\]
\end{proof}

\begin{corollary}
\label{cor-Lflat}
Suppose $M$ is an immediate extension of $K$, let $F\colon A\to K$ be an $\LO(K)$-definable function, and let $a \in A^M$. Then there is an $\cL(K)$-definable cell $D \subseteq A$ with $a \in D^M$ such that either $F(y) = 0$ for all $y\in D^M$ or $F(y) \sim F(a)$ for all $y \in D^M$.
\end{corollary}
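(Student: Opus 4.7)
The plan is to reduce the corollary to Lemma~\ref{lem-Lapprox} by choosing an appropriate $\LO(K)$-definable subset of $A$ containing $a$, and then applying the lemma to it.

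First I would split into the two cases according to whether $F(a)$ vanishes. If $F(a) = 0$, I would set
\[
B\ :=\ \{y \in A : F(y) = 0\},
\]
which is $\LO(K)$-definable, contains $a$ in its natural extension to $M$, and is contained in $A$. Lemma~\ref{lem-Lapprox} then yields an $\cL(K)$-definable cell $D \subseteq B$ with $a \in D^M$; since $D \subseteq B$ implies $F(y) = 0$ on $D^M$, this settles this case.

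Suppose now $F(a) \neq 0$. Because $M$ is an immediate extension of $K$, there exists $c \in K^\times$ with $F(a) \sim c$, by the characterization recalled just before Lemma~\ref{lem-Lapprox}. The condition ``$F(y) \sim c$'' is equivalent to ``$F(y) - c \in c\smallo$'', which is $\LO(K)$-definable since $\smallo$ is. Thus
\[
B\ :=\ \{y \in A : F(y) \sim c\}
\]
is an $\LO(K)$-definable subset of $A$ with $a \in B^M$. Applying Lemma~\ref{lem-Lapprox} gives an $\cL(K)$-definable cell $D \subseteq B \subseteq A$ with $a \in D^M$. For any $y \in D^M$ we have $F(y) \sim c$, and since $F(a) \sim c$ and $\sim$ is an equivalence relation on $K^\times$, it follows that $F(y) \sim F(a)$.

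There is no serious obstacle here: the entire content of the corollary sits inside the approximation statement of Lemma~\ref{lem-Lapprox}, and the only small point to verify is that the two predicates ``$F(y) = 0$'' and ``$F(y) \sim c$'' are $\LO(K)$-definable, which is immediate. The use of immediacy enters only through the choice of the constant $c \in K^\times$ in the second case.
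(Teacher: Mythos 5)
Your proof is correct and follows exactly the paper's argument: case split on whether $F(a)=0$, then apply Lemma~\ref{lem-Lapprox} to the $\LO(K)$-definable set $\{y\in A:F(y)=0\}$ or $\{y\in A:F(y)\sim c\}$ respectively, using immediacy to find $c\in K^\times$ with $F(a)\sim c$. The only difference is that you spell out the routine details (definability of the relevant sets, transitivity of $\sim$) that the paper leaves implicit.
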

\begin{proof}
If $F(a) = 0$, then apply Lemma~\ref{lem-Lapprox} to the $\LO(K)$-definable set $\big\{ y\in A:F(y) =0\big\}$.
If $F(a)\neq 0$, then take $b \in K^\times$ with $F(a)\sim b$ and apply Lemma~\ref{lem-Lapprox} to the $\LO(K)$-definable set $\big\{ y\in A:F(y) \sim b\big\}$.
\end{proof}

\noindent
Let $\ell$ be an element in an immediate $\TO$-extension of $K$. Then the set $v(\ell-K)$ is contained in $\Gamma$ and has no largest element. To see this, let $y \in K$ be given and take $b \in K$ with $\ell-y\sim b$. Then $v(\ell-y-b)>v(\ell-y)= v(b) \in \Gamma$. These values $v(\ell-y)$ completely determine the extension $K\langle \ell\rangle$ up to $\LO(K)$-isomorphism\footnote{Model-theoretically speaking, this means that the $\LO$-type of $\ell$ over $K$ is determined by these values.}:

\begin{corollary}
\label{cor-immediateext1}
Let $K\langle \ell \rangle$ be a simple immediate $\TO$-extension of $K$ and let $a\in M$ with $v(a-y) = v(\ell-y) \in \Gamma$ for each $y \in K$. Then there is a unique $\LO(K)$-embedding $K\langle \ell \rangle\to M$ sending $\ell$ to $a$.
\end{corollary}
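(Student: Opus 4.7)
The plan is to define the embedding by $F(\ell)\mapsto F(a)$ for each $\cL(K)$-definable $F\colon K\to K$; uniqueness is then automatic from $K\langle\ell\rangle = \dclL(K\cup\{\ell\})$. Note that the hypothesis forces $\ell\notin K$ (otherwise $v(\ell-\ell)=\infty\notin\Gamma$), and likewise $a\notin K$.

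My first task is to check that $\ell$ and $a$ realize the same cut in $K$. Fix $y_0\in K$. Since $v(\ell-K)$ has no largest element (as noted just before the corollary), I pick $y_1\in K$ with $v(\ell-y_1)>v(\ell-y_0)$. Then in $K\langle\ell\rangle$,
\[
y_1-y_0\ =\ (\ell-y_0)-(\ell-y_1)\ \sim\ \ell-y_0,
\]
and applying the same identity to $a$, combined with the hypothesis $v(a-y_i)=v(\ell-y_i)$ for $i=0,1$, yields $y_1-y_0\sim a-y_0$ in $M$. So $\ell-y_0$ and $a-y_0$ share the sign of $y_1-y_0\in K$, giving $\ell>y_0\Leftrightarrow a>y_0$.

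By o-minimality, sharing a cut over the model $K$ is equivalent to realizing the same $\cL$-type over $K$; combined with definable Skolem functions (which make $K\langle\ell\rangle$ an elementary $\cL$-substructure of any $\cL$-structure containing $K$ and $\ell$), this shows that $F(\ell)\mapsto F(a)$ defines an $\cL(K)$-embedding $\sigma\colon K\langle\ell\rangle\to M$. To upgrade $\sigma$ to an $\LO$-embedding it suffices to verify $v_M(\sigma(c))=v(c)$ for every $c\in K\langle\ell\rangle^\times$. By immediacy pick $d\in K^\times$ with $c\sim d$, and write $c=F(\ell)$. The $\LO(K)$-definable set $S:=\{y\in K:F(y)\sim d\}$ then contains $\ell$, so Lemma~\ref{lem-Lapprox} (applied with $K\langle\ell\rangle$ in the role of $M$) provides an $\cL(K)$-definable cell $D\subseteq S$ with $\ell\in D$. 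Since $a$ shares the $\cL$-type of $\ell$ over $K$, also $a\in D^M\subseteq S^M$, whence $\sigma(c)=F(a)\sim d$ in $M$ and so $v_M(\sigma(c))=v(d)=v(c)$.

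The only step requiring real thought is the cut extraction: signs are not visible from $v(\ell-y)$ alone, so one genuinely needs immediacy (through the approximating element $y_1$) to recover them. Once that is in hand, the $\cL(K)$-embedding is a standard consequence of o-minimality plus Skolem functions, and preservation of $\cO$ reduces to a direct application of Lemma~\ref{lem-Lapprox}.
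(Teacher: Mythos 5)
Your proof is correct and follows essentially the same route as the paper's: first recover the cut of $\ell$ from the valuation data using immediacy (the paper phrases this via an approximant $f\sim\ell-y$ while you use an approximant $y_1$; these are the same trick), then get the $\cL(K)$-embedding from the shared cut, then upgrade to $\LO(K)$ by applying Lemma~\ref{lem-Lapprox} (the paper uses its consequence Corollary~\ref{cor-Lflat}, which is the same device packaged slightly differently). One small point to make explicit: the step $D^M\subseteq S^M$ is not automatic from $D\subseteq S$ in $K$, since $S$ is $\LO(K)$-definable but not $\cL(K)$-definable; it requires that $M$ be an \emph{elementary} $\TO$-extension of $K$, which holds here because $K$ has a proper immediate extension (hence $\cO\ne K$), as the paper notes. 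With that observation spelled out, your argument is complete.
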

\begin{proof}
First, we will show that $a$ and $\ell$ realize the same cut in $K$. Let $y \in K$ with $y<\ell$ and take $f \in K^>$ with $\ell-y \sim f$. Then $\ell-y-f \prec f$, so $a-y-f \asymp \ell-y-f \prec f$. Thus $a-y \sim f > 0$. Likewise, if $y\in K$ and $y>\ell$, then $y>a$. This gives us a unique $\cL(K)$-embedding $\iota\colon K\langle \ell \rangle\to M$ sending $\ell$ to $a$. To get that $\iota$ is an $\LO(K)$-embedding, let $F\colon K\to K$ be $\cL(K)$-definable. We need to show that $F(\ell) \in \cO_{K\langle \ell \rangle}$ if and only if $F(a) \in \cO_M$. We assume that $F(\ell) \neq 0$, and we will show that $F(\ell) \sim F(a)$. Using Corollary~\ref{cor-Lflat}, take an interval $I \subseteq K$ with $\ell \in I^{K\langle \ell \rangle}$ such that $F(y) \sim F(\ell)$ for all $y \in I^{K\langle \ell \rangle}$. Then $F(a) \sim F(y) \sim F(\ell)$, since $a \in I^M$.
\end{proof}

\noindent
A \textbf{closed $v$-ball in $K$} is a set of the form $\big\{y \in K: v(y-a)\geq\gamma\big\}$ for some $a \in K$ and some $\gamma \in \Gamma_\infty$. Let $\cB$ be a collection of closed $v$-balls in $K$. Then $\cB$ is said to be \textbf{nested} if $B_1 \cap B_2 \neq \emptyset$ for any $B_1,B_2 \in \cB$. If every nested collection of closed $v$-balls in $K$ has nonempty intersection in $K$, then $K$ is said to be \textbf{spherically complete}. If $M$ is a $\TO$-extension of $K$, then we let $\cB^M$ denote the collection $\{B^M:B \in \cB\}$. Below we list some standard facts about spherical completeness.

\begin{lemma}\label{lem-basicvball}\
\begin{enumerate}
\item Let $\cB$ be a nested collection of closed $v$-balls in $K$ with empty intersection. By compactness, $K$ has a simple $\TO$-extension $K\langle a \rangle$ with $a \in \bigcap\cB^{K\langle a \rangle}$. 
\item Let $\cB$ and $a$ be as in (1). Then the set $v(a-K)$ has no largest element and for $y \in K$, the value $v(a-y)$ does not depend on the choice of $a$, just on the assumption $a \in \bigcap\cB^{K\langle a \rangle}$. 
\item Suppose $M$ is an immediate extension of $K$ and let $a\in M\setminus K$. Then the collection of all closed $v$-balls $B$ in $K$ with $a \in B^M$ is nested and has empty intersection in $K$.
\item If $K$ is spherically complete, then $K$ has no proper immediate $\TO$-extensions.
\end{enumerate}
\end{lemma}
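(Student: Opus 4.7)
I plan to dispose of the trivially valued case $\cO = K$ at the outset: there every closed $v$-ball equals $K$, so no nested collection has empty intersection, and the immediateness conditions $\Gamma_M = \{0\}$ and $\res(M) = \res(K) = K$ force any immediate $\TO$-extension $M$ of $K$ to equal $K$. All four parts are then vacuous or trivial, and I may assume $\cO \neq K$.

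For (1), the approach is a compactness/saturation argument. Each closed $v$-ball $B = \{z : v(z - c_B) \geq \delta_B\}$ (picking any $c_B \in B$ with the appropriate $\delta_B \in \Gamma$) is cut out by a quantifier-free $\LO(K)$-formula $\varphi_B(y)$, namely $(y - c_B)/y_B \in \cO$ for any $y_B \in K^\times$ with $v(y_B) = \delta_B$. Nestedness together with the ultrametric inequality makes any finite subcollection of $\cB$ totally ordered by inclusion, so its intersection equals the smallest member and is nonempty; thus $\{\varphi_B(y) : B \in \cB\}$ is finitely satisfiable over $K$ and is realized by some $a$ in a sufficiently saturated elementary $\TO$-extension $N$ of $K$. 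The hypothesis $\bigcap \cB = \emptyset$ forces $a \notin K$. The simple $\cL$-substructure $K\langle a \rangle \subseteq N$ equipped with its induced $T$-convex valuation ring is an elementary $\TO$-extension of $K$ by~\cite[Corollary 3.13]{DL95} (using $\cO \neq K$), and since each $\varphi_B$ is quantifier-free and hence preserved by $\LO$-substructure, one gets $a \in \bigcap \cB^{K\langle a \rangle}$.

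For (2), given $y \in K$ with $v(a-y) = \gamma$, pick $B \in \cB$ not containing $y$ (possible since $\bigcap \cB = \emptyset$); writing $B = \{z:v(z-c) \geq \delta\}$ with $c \in B$, the strict inequality $v(a-c) \geq \delta > v(y-c)$ yields $v(a-y) = v(y-c)$ by the ultrametric, so $v(a-c) > v(a-y)$ and $y' := c$ shows $\gamma$ is not maximal. The identity $v(a-y) = v(y-c)$ depends only on $y$ and $B$, not on $a$, giving independence. For (3), nestedness holds because $B_1^M$ and $B_2^M$ both contain $a$, so one includes the other and the inclusion descends to $K$; for empty intersection, if $y \in \bigcap \cB$ then immediateness of $M$ supplies $c \in K^\times$ with $a-y \sim c$, and $y' := y+c \in K$ satisfies $v(a-y') > v(a-y) = v(c)$, so the ball $B := \{z \in K : v(z-y') \geq v(a-y')\}$ lies in $\cB$ yet excludes $y$ (as $v(y-y') = v(c) < v(a-y')$), contradicting $y \in \bigcap \cB$. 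Finally (4) is immediate: a proper immediate $\TO$-extension of $K$ furnishes $a \in M \setminus K$ to which (3) applies, yielding a nested collection of closed $v$-balls in $K$ with empty intersection, contradicting spherical completeness. The main obstacle is part (1), where the hard point is not producing $a$ but ensuring that the simple extension $K\langle a\rangle$ is \emph{elementary} as a $\TO$-extension---exactly where the reduction to $\cO \neq K$ and the invocation of~\cite[Corollary 3.13]{DL95} become essential; parts (2)--(4) then drop out of the ultrametric inequality combined with the definition of immediateness.
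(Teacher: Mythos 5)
Your proof is correct and takes essentially the same route as the paper's: compactness for (1), picking a ball excluding $y$ and using the ultrametric inequality for (2), and the same construction (a ball of radius $v(a-y')$ centered at a better approximant $y'$) for (3), with (4) a corollary of (3). The only cosmetic differences are that you dispose of the trivially valued case up front (where all parts are vacuous or trivial) and that you establish nestedness in (3) by the ultrametric fact that intersecting balls are comparable, whereas the paper instead appeals to $\LO$-elementarity to pull a common point down to $K$; both are fine.
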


\subsection{Power bounded theories}
A \textbf{power function on $K$} is an $\cL(K)$-definable endomorphism of the multiplicative group $K^>$. Each power function $F$ is $\cC^1$ on $K^>$ and uniquely determined by $F'(1)$, and we set
\[
\Lambda\ \coloneqq \ \big\{F'(1): F \text{ is a power function on }K\big\}.
\]
Then $\Lambda$ is a subfield of $K$, and it is called the \textbf{field of exponents of $K$}. For $a \in K^>$ and a power function $F$, we suggestively write $F(a)$ as $a^\lambda$ where $\lambda=F'(1)$. We say that $K$ is \textbf{power bounded} if for each $\cL(K)$-definable function $F\colon K\to K$, there is $\lambda$ in the field of exponents of $K$ with $|F(x)|<x^\lambda$ for all sufficiently large positive $x$.

\medskip\noindent
An \textbf{exponential function on $K$} is an ordered group isomorphism from the additive group $K$ to the multiplicative group $K^>$. Any exponential function on $K$ grows more quickly than every power function on $K$. By~\cite{Mi96}, either $K$ is power bounded or $K$ defines an exponential function. It follows that being power bounded is a property of the theory $T$ (we say that \emph{$T$ is power bounded}). If $T$ is power bounded, then each power function on $K$ is $\cL(\emptyset)$-definable, so we refer to the field of exponents $\Lambda$ as the \emph{field of exponents of $T$}, as $\Lambda$ does not depend on $K$. If $T$ is power bounded with archimedean field of exponents, then $T$ is said to be \textbf{polynomially bounded}.

\medskip\noindent
If $T$ is power bounded with field of exponents $\Lambda$, then the value group $\Gamma$ naturally admits the structure of an ordered $\Lambda$-vector space by setting $\lambda v(a) \coloneqq v(a^\lambda)$ for $a \in K^>$ (this does not depend on the choice of $a$). If $T$ is power bounded, then we have a better understanding of the immediate $\TO$-extensions of $K$ thanks to the following result of Tyne:

\begin{fact}[\cite{Ty03}, Theorems 12.10 and 13.4]
Let $T$ be power bounded and let $K\langle \ell \rangle$ be a simple $\TO$-extension of $K$.
\begin{enumerate}
\item If $\Gamma_{K\langle \ell \rangle} \neq \Gamma$, then there is $a \in K$ with $v(\ell-a) \not\in \Gamma$.
\item If $\res K\langle \ell \rangle \neq \res K$, then there are $a,b \in K$ with $b(\ell-a)\preceq 1$ and $\overbar{b(\ell-a)} \not\in \res K$.
\end{enumerate}
\end{fact}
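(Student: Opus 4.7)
The plan is to reduce both parts to the asymptotic classification of $\cL(K)$-definable unary functions in the power bounded setting. The key input is (a version of) Miller's Puiseux-type expansion: if $F\colon K\to K$ is $\cL(K)$-definable and nonzero on some open interval in $K$, then at each endpoint $a\in K$ of the interval (and analogously at $\pm\infty$) there exist $c\in K^\times$ and $\lambda\in\Lambda$ such that
\[
F(x)\ =\ c(x-a)^\lambda\bigl(1+\epsilon(x)\bigr), \qquad \epsilon(x)\to 0\ \text{as}\ x\to a^+,
\]
with the corresponding statement at $\pm\infty$ obtained by substituting $1/x$. In particular, on each cell arising from the cell decomposition of $F$, this gives the valuation of $F(x)$ as a $\Lambda$-affine combination of $v(x-a)$ for a finite list of $a\in K$, once $x$ is close enough to the relevant endpoint.

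For (1), I assume $\Gamma_{K\langle\ell\rangle}\neq\Gamma$ and pick some $\cL(K)$-definable $G$ with $vG(\ell)\notin\Gamma$. By cell decomposition (applied in $K$, then transferred to $K\langle\ell\rangle$), $\ell$ lies in the natural extension of an $\cL(K)$-definable open interval $I=(a_-,a_+)$ on which $G$ is $\cC^1$, monotone, and nonvanishing. Applying the Puiseux expansion at $a_-$ (using $-x$ or $1/x$ if $a_-=-\infty$), I can, after finitely many refinements of the interval, arrange that the remainder $\epsilon(\ell)$ lies in $\smallo_M$; then $vG(\ell)=vc+\lambda\, v(\ell-a_-)$ with $c\in K^\times$ and $\lambda\in\Lambda$. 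Since $vc\in\Gamma$ and $\Gamma$ is a $\Lambda$-vector space by power boundedness, $vG(\ell)\notin\Gamma$ forces $\lambda\neq 0$ and $v(\ell-a_-)\notin\Gamma$, giving the required $a=a_-$ (replacing $a_-$ by $0$ if it was $\pm\infty$, which is fine because then $v\ell\notin\Gamma$).

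Part (2) runs on the same template. Assume $\res K\langle\ell\rangle\neq\res K$ and choose $\cL(K)$-definable $G$ with $G(\ell)\asymp 1$ and $\overline{G(\ell)}\notin\res K$. Expanding as before yields $a\in K$, $c\in K^\times$, $\lambda\in\Lambda$ with $G(\ell)\sim c(\ell-a)^\lambda$. The case $\lambda=0$ cannot occur, since then $G(\ell)\sim c\in K$ gives $\overline{G(\ell)}\in\res K$. So $\lambda\neq 0$, and power boundedness lets me pick $b\in K^>$ with $b^\lambda\asymp c$, whence $b(\ell-a)\asymp 1$. The residue $\overline{b(\ell-a)}$ then corresponds, via the $\cL$-isomorphism $x\mapsto cx^\lambda\colon\res(K\langle\ell\rangle)^>\to\res(K\langle\ell\rangle)^>$ applied to its class, to $\overline{G(\ell)}\notin\res K$, so $\overline{b(\ell-a)}\notin\res K$ either.

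The main obstacle is making the Puiseux expansion precise enough to handle the regime control: ensuring that after finitely many cell refinements $\ell$ really does land in a sub-cell where $\epsilon(\ell)\in\smallo_M$. This is where the o-minimal finiteness of the partitioning into Puiseux regions is essential, and is the step that would require the most detailed reference to Tyne's preparation-theoretic work; once one has that preparation, the rest of the argument is essentially a bookkeeping computation with valuations. A subsidiary point is the careful choice of sign when extracting $\lambda$-th roots in (2) for non-integer $\lambda$, which needs the power function $x\mapsto x^\lambda$ on $K^>$ supplied by power boundedness.
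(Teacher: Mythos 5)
This Fact is cited by the paper from Tyne's thesis (\cite{Ty03}) without proof, so there is no in-paper argument to compare against; the assessment below is of your sketch on its own terms. Your proposal identifies the right tool---the power-bounded Puiseux/preparation asymptotics for one-variable definable functions, which one does get by iterating Miller's growth dichotomy---but it punts on exactly the step that carries essentially all of the difficulty: the ``regime control'' guaranteeing that $\ell$ actually lies in the region where the chosen expansion governs $G$. This is not a routine matter of finitely many cell refinements.

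Concretely: the expansion $G(x)=c(x-a_-)^\lambda\bigl(1+\epsilon(x)\bigr)$, with (say) $|\epsilon|<1/2$, is valid on some $\cL(K)$-definable interval $(a_-,a_-+\eta)$ with $\eta\in K^>$. Nothing in your setup forces $\ell<a_-+\eta$ in $K\langle\ell\rangle$. If $\ell\geq a_-+\eta$ you must shift the expansion point to the right; the new regime has a new width $\eta'$, possibly much smaller, and there is no reason the process ever catches up to $\ell$. The power decay of the remainder (an input you are using implicitly, and which is correct in the power-bounded setting) would resolve this if you already knew $\ell-a$ were infinitesimal relative to the chosen scale---but the existence of such an $a$ is essentially equivalent to what the valuation and residue properties are asserting, so the argument is circular precisely where it matters. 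Closing the gap requires selecting the expansion point and rescaling factor as a function of the cut $\ell$ realizes and controlling a whole definable family of expansions uniformly; that uniform control is the content of Tyne's preparation-theoretic work on $T$-levels, which is exactly what the citation is doing. For part (1), one at least only needs $1+\epsilon(\ell)\asymp 1$; for part (2) the gap is a bit worse, since the residue computation genuinely needs $\epsilon(\ell)\prec 1$. The remaining bookkeeping in (2)---choosing $b$ with $b^\lambda\asymp c$, using that $x\mapsto x^\lambda$ is a definable bijection of $\res(K)^>$, and fixing signs via absolute values---is fine.
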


\noindent
Item (1) above is often referred to as the \textbf{valuation property}, and item (2) is called the \textbf{residue property}. These properties allow us to characterize the simple immediate $\TO$-extensions of $K$ as follows:

\begin{lemma}
\label{lem-immediateext2}
Let $T$ be power bounded and let $K\langle \ell \rangle$ be a simple $\TO$-extension of $K$. The following are equivalent:
\begin{enumerate}
\item $v(\ell-K)$ has no largest element;
\item for each $a \in K$ there is $d \in K$ with $\ell-a \sim d$;
\item $K\langle \ell \rangle$ is an immediate extension of $K$.
\end{enumerate}
\end{lemma}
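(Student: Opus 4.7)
The plan is to prove the cycle $(3) \Rightarrow (1) \Rightarrow (2) \Rightarrow (3)$, with power boundedness entering only in the last step via Tyne's valuation and residue properties. Note that since the extension is simple we have $\ell \notin K$, so $\ell - a \in K\langle \ell \rangle^{\times}$ for every $a \in K$ and all the valuations below lie in $\Gamma_{K\langle \ell \rangle}$.

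For $(3) \Rightarrow (1)$: this is exactly the argument already given just before Corollary~\ref{cor-immediateext1}. If $K\langle \ell \rangle$ is immediate then for any $a \in K$ pick $b \in K^{\times}$ with $\ell - a \sim b$; then $v(\ell - a - b) > v(\ell - a) = v(b)$ and $b \in K^{\times}$, so $a + b \in K$ witnesses that $v(\ell - a)$ is not the largest element of $v(\ell - K)$.

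For $(1) \Rightarrow (2)$: fix $a \in K$. By (1) there exists $a' \in K$ with $v(\ell - a') > v(\ell - a)$; in particular $a' \neq a$. Set $d := a' - a \in K^{\times}$. Then
\[
v\bigl((\ell - a) - d\bigr)\ =\ v(\ell - a')\ >\ v(\ell - a),
\]
so $v(\ell - a) = v(d) \in \Gamma$ and $\ell - a \sim d$, as required.

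For $(2) \Rightarrow (3)$: here is where power boundedness is used, via the preceding fact of Tyne. Suppose first that $\Gamma_{K\langle \ell \rangle} \neq \Gamma$. By the valuation property there is $a \in K$ with $v(\ell - a) \notin \Gamma$, contradicting (2), since (2) gives $d \in K^{\times}$ with $v(\ell - a) = v(d) \in \Gamma$. Suppose next that $\res K\langle \ell \rangle \neq \res K$. By the residue property there are $a, b \in K$ with $b(\ell - a) \preceq 1$ and $\overbar{b(\ell - a)} \notin \res K$. Applying (2) to $a$, we get $d \in K^{\times}$ with $\ell - a \sim d$. Multiplying by $b \in K^{\times}$ gives $b(\ell - a) \sim bd$, so $v(b(\ell - a) - bd) > v(bd) \geq 0$, whence $b(\ell - a) - bd \in \smallo_{K\langle \ell \rangle}$ and $\overbar{b(\ell - a)} = \overbar{bd} \in \res K$, a contradiction. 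Thus both the value group and residue field remain unchanged, so $K\langle \ell \rangle$ is an immediate extension of $K$.

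The only step that is not purely bookkeeping is $(2) \Rightarrow (3)$, and even there the real content has been outsourced to Tyne's valuation and residue properties; the main obstacle would be purely notational, namely making sure that the $\sim$-approximation is compatible with multiplication by the scaling factor $b$ in the residue-property application.
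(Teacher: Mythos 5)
Your proof is correct and takes essentially the same approach as the paper: the cycle is traversed in a different order, and in $(2)\Rightarrow(3)$ you argue by contradiction (which lets the $b\neq 0$ subcase come for free from $\overbar{b(\ell-a)}\notin\res K$) where the paper argues directly and splits into $b=0$ and $b\neq 0$, but each individual implication is proved by the same mechanism, with Tyne's valuation and residue properties carrying the weight in $(2)\Rightarrow(3)$.
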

\begin{proof}
Assume (1) holds, let $a \in K$, and take $b \in K$ with $\ell-b \prec \ell - a$. Then $\ell-a \sim \ell - a - (\ell-b) = b-a\in K$. Now, assume (2) holds. Then $v(\ell-a) \in \Gamma$ for each $a \in K$, so $\Gamma_{K\langle \ell \rangle}=\Gamma$ by the valuation property. Let $a,b \in K$ with $b(\ell-a) \preceq 1$ and take $d \in K$ with $\ell-a \sim d$. If $b = 0$, then $\overbar{b(\ell-a)} =0 \in \res(K)$ and if $b \neq 0$, then $b(\ell-a) \sim bd$ so $\overbar{b(\ell-a)} =\overbar{bd} \in \res(K)$. Thus, $\res K\langle \ell \rangle = \res K$ by the residue property. Finally, suppose (3) holds, let $a \in K$, and take $d \in K$ with $\ell-a \sim d$. Then $v(\ell-a-d) >v(\ell-a)$, so $v(\ell-K)$ has no largest element. 
\end{proof}

\noindent
We can use this equivalence together with Lemma~\ref{lem-basicvball} to show that if $T$ is power bounded, then any nested collection of closed $v$-balls has nonempty intersection in an \emph{immediate} $\TO$-extension of $K$.

\begin{corollary}
\label{cor-immediateext3}
Let $T$ be power bounded and let $\cB$ be a nested collection of closed $v$-balls in $K$ with empty intersection. Then there is a simple immediate $\TO$-extension $K\langle \ell \rangle$ of $K$ with $\ell \in \bigcap\cB^{K\langle \ell \rangle}$. Given $a\in \bigcap \cB^M$, there is a unique $\LO(K)$-embedding $K\langle \ell \rangle\to M$ sending $\ell$ to $a$.
\end{corollary}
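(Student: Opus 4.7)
The plan is to obtain this corollary by assembling Lemma~\ref{lem-basicvball}, Lemma~\ref{lem-immediateext2}, and Corollary~\ref{cor-immediateext1}. The power boundedness hypothesis enters at exactly one step: upgrading the elementary extension produced by Lemma~\ref{lem-basicvball} to an immediate extension.

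For existence, I would invoke Lemma~\ref{lem-basicvball}(1) to produce a simple elementary $\TO$-extension $K\langle\ell\rangle$ of $K$ with $\ell\in\bigcap\cB^{K\langle\ell\rangle}$. Lemma~\ref{lem-basicvball}(2) then tells us that $v(\ell-K)\subseteq\Gamma$ has no largest element. Since $T$ is power bounded, the implication $(1)\Rightarrow(3)$ of Lemma~\ref{lem-immediateext2} applies and yields that $K\langle\ell\rangle$ is an immediate extension of $K$, as required.

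For the uniqueness clause, given $a\in\bigcap\cB^M$, I would apply Lemma~\ref{lem-basicvball}(2) once to $\ell$ (inside $K\langle\ell\rangle$) and once to $a$ (inside $M$) to conclude that $v(\ell-y)=v(a-y)\in\Gamma$ for every $y\in K$, since each of these values depends only on the family $\cB$, not on the chosen realizer. Corollary~\ref{cor-immediateext1} then furnishes the unique $\LO(K)$-embedding $K\langle\ell\rangle\to M$ sending $\ell$ to $a$. There is no substantial obstacle here; the whole argument is essentially an assembly, and the only point worth highlighting is that the invariance statement in Lemma~\ref{lem-basicvball}(2) is precisely the hypothesis required by Corollary~\ref{cor-immediateext1}.
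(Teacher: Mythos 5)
Your proposal is correct and follows exactly the same route as the paper: Lemma~\ref{lem-basicvball}(1) for existence of the elementary extension, Lemma~\ref{lem-basicvball}(2) plus Lemma~\ref{lem-immediateext2} to upgrade it to an immediate extension (this is where power boundedness is used, as you note), and Lemma~\ref{lem-basicvball}(2) again together with Corollary~\ref{cor-immediateext1} for the uniqueness of the embedding. There is no substantive difference between your argument and the one in the paper.
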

\begin{proof}
Using (1) of Lemma~\ref{lem-basicvball}, let $K\langle \ell\rangle$ be a simple $\TO$-extension of $K$ with $\ell \in \bigcap\cB^{K\langle \ell \rangle}$. By (2) of Lemma~\ref{lem-basicvball}, the set $v(\ell-K)$ has no largest element, so $K\langle \ell\rangle$ is an immediate extension of $K$ by Lemma~\ref{lem-immediateext2}. For $a\in \bigcap \cB^M$, we have $v(\ell-y) = v(a-y)$ for all $y \in K$ by (2) of Lemma~\ref{lem-basicvball}, so Corollary~\ref{cor-immediateext1} gives us a unique $\LO(K)$-embedding $K\langle \ell \rangle\to M$ sending $\ell$ to $a$.
\end{proof}

\begin{corollary}
\label{cor-immediateext}
Suppose $T$ is power bounded. Then $K$ has a spherically complete immediate $\TO$-extension which is unique up to $\LO(K)$-isomorphism. 
\end{corollary}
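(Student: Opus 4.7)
The plan is to iteratively apply Corollary~\ref{cor-immediateext3} to construct a spherically complete immediate extension, and then run a back-and-forth argument (again via Corollary~\ref{cor-immediateext3}) for uniqueness.

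For existence, I would build by transfinite recursion a chain $(K_\alpha)$ of immediate $\TO$-extensions of $K$, starting with $K_0 := K$, taking unions at limit stages (which remain immediate extensions of $K$), and at a successor stage $\alpha+1$, if $K_\alpha$ is not spherically complete, choosing a nested family $\cB$ of closed $v$-balls in $K_\alpha$ with empty intersection and letting $K_{\alpha+1}$ be the simple immediate $\TO$-extension of $K_\alpha$ produced by Corollary~\ref{cor-immediateext3} in which $\cB$ acquires a common point; if $K_\alpha$ is already spherically complete, halt. By Corollary~\ref{cor-immediateext1}, every simple immediate $\TO$-extension of a fixed $\TO$-model $L$ is determined up to $\LO(L)$-isomorphism by a downward closed subset of $\Gamma_L = \Gamma$ with no largest element; a standard cardinality argument analogous to the classical Krull case then bounds the cardinality of any immediate $\TO$-extension of $K$ by some fixed cardinal, so the chain must stabilize. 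When the process halts at some $K_\lambda$, spherical completeness holds by construction, and Lemma~\ref{lem-basicvball}(4) confirms that $K_\lambda$ has no proper immediate $\TO$-extension.

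For uniqueness, let $M_1, M_2$ be two spherically complete immediate $\TO$-extensions of $K$. By Zorn's lemma, choose an $\LO(K)$-embedding $\iota\colon K' \to M_2$ with $K \subseteq K' \subseteq M_1$ an $\LO$-substructure which is an immediate $\TO$-extension of $K$, maximal under extension. I claim $K' = M_1$. If not, pick $a \in M_1 \setminus K'$; by Lemma~\ref{lem-basicvball}(3), the collection $\cB$ of closed $v$-balls of $K'$ containing $a$ is nested with empty intersection in $K'$. Transporting via $\iota$, the collection $\iota(\cB)$ is nested in $\iota(K')$ with empty intersection in $\iota(K')$; spherical completeness of $M_2$ furnishes $b \in \bigcap \iota(\cB)^{M_2}$, and Corollary~\ref{cor-immediateext3} (applied over $\iota(K')$) extends $\iota$ to an $\LO(K)$-embedding $K'\langle a \rangle \to M_2$ sending $a$ to $b$, contradicting maximality. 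Hence $K' = M_1$, so $\iota(M_1) \subseteq M_2$ is a spherically complete immediate $\TO$-extension of $K$. If $M_2 \setminus \iota(M_1)$ were nonempty, then Lemma~\ref{lem-basicvball}(3) applied inside $M_2$ to $\iota(M_1)$ would produce a nested family of closed $v$-balls in $\iota(M_1)$ with empty intersection, contradicting spherical completeness of $\iota(M_1)$. Thus $\iota$ is an $\LO(K)$-isomorphism $M_1 \to M_2$.

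The most delicate point is the cardinality bound used in the existence argument: Corollary~\ref{cor-immediateext1} controls isomorphism types of simple immediate extensions but not directly the cardinality of an arbitrary immediate extension, since a single extension can contain many elements realizing the same cut over $K$. One way to sidestep the bookkeeping is to carry out the construction inside a sufficiently saturated ambient $\LO(K)$-extension $\mfM$ of $K$, using saturation to realize each required ball intersection within $\mfM$ and Zorn's lemma on the poset of immediate $\TO$-extensions of $K$ contained in $\mfM$ of cardinality below the saturation degree; spherical completeness of a maximal such then follows as above.
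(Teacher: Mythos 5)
Your proposal is correct and follows essentially the same route as the paper: existence by iterating Corollary~\ref{cor-immediateext3} via transfinite recursion/Zorn (the paper compresses this to a one-line Zorn appeal), and uniqueness by building an $\LO(K)$-embedding step-by-step using Lemma~\ref{lem-basicvball}(3) and Corollary~\ref{cor-immediateext3}, then invoking Lemma~\ref{lem-basicvball}(4) to see the embedding is onto. The only difference is that you make explicit the cardinality bound (or the alternative saturated-ambient device) needed to run the recursion, a point the paper leaves implicit.
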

\begin{proof}
If $K$ is not itself spherically complete, then $K$ has a proper immediate $\TO$-extension by Corollary~\ref{cor-immediateext3}. It follows by Zorn's lemma that $K$ has a spherically complete immediate $\TO$-extension. For uniqueness, let $L$ and $M$ be two spherically complete immediate $\TO$-extensions of $K$. We first show that there is an $\LO(K)$-embedding $L \to M$. For this, we assume $K \neq L$, and we let $\ell \in L \setminus K$. Let $\cB$ be the collection of all closed $v$-balls $B$ in $K$ with $\ell \in B^L$. This collection is nested and has empty intersection in $K$ by (3) of Lemma~\ref{lem-basicvball}. Let $a \in\bigcap \cB^M$ and, again using Corollary~\ref{cor-immediateext3}, take an $\LO(K)$-embedding $K\langle \ell\rangle \to M$ sending $\ell$ to $a$. Continuing in this manner, we construct an $\LO(K)$-embedding $L \to M$, and we identify $L$ with an $\LO$-substructure of $M$ via this embedding. Then $M$ is an immediate extension of $L$, so $L = M$ by (4) of Lemma~\ref{lem-basicvball}, as $L$ is spherically complete.
\end{proof}

\noindent
The assumption of power boundedness in Corollary~\ref{cor-immediateext} is necessary. 

\begin{remark}
\label{rem-noimmediateext}
If $T$ is not power bounded, then $K$ has no spherically complete $\TO$-extension. To see this, we use Miller's dichotomy and a theorem of Kuhlmann, Kuhlmann, and Shelah. Suppose toward contradiction that $K$ is itself spherically complete and that $T$ is not power bounded. By~\cite{Mi96}, $K$ admits an $\cL(\emptyset)$-definable exponential function $\exp$. Let $\k\subseteq \cO$ be a lift of $\res(K)$, so $\exp\!|_{\k}$ is an exponential function on $\k$. Using~\cite[Lemma 3.3.32]{ADH17}, we take a subgroup $\mfM \subseteq K^>$ such that $v|_{\mfM}\colon \mfM\to \Gamma$ is a group isomorphism. Using~\cite[Corollary 3.3.42]{ADH17}, we get an ordered valued field isomorphism from $K$ to the ordered Hahn field $\k[[\mfM]]$ which is the identity on $\mfM$ and $\k$. The exponential on $K$ induces an exponential on $\k[[\mfM]]$ which restricts to the exponential $\exp\!|_{\k}$ on $\k$, contradicting the main theorem in~\cite{KKS97}.
\end{remark}

\subsection{Coarsening and specialization} \label{subsec-TOcoarsening}
In this subsection, we set up notation and prove some basic lemmas about coarsening and specialization. For the remainder of this subsection, we assume that $T$ is power bounded with field of exponents $\Lambda$, and we let $\Delta$ be a nontrivial convex $\Lambda$-subspace of $\Gamma$. We set $\dot{\Gamma}\ \coloneqq \ \Gamma/\Delta$, and we let $\dot{v}\colon K^\times \to \dot{\Gamma}$ be the map $a\mapsto va+\Delta \in\dot{\Gamma}$. Then $\dot{v}$ is a Krull valuation on $K$ with valuation ring and maximal ideal
\[
\dot{\cO}\ \coloneqq \ \{y \in K: vy\geq\delta\text{ for some }\delta \in \Delta\},\qquad \dot{\smallo}\ \coloneqq \ \{y \in K: vy>\Delta\}.
\]

\begin{lemma}
\label{lem-CoarseTconvex}
The valuation ring $\dot{\cO}$ is a $T$-convex valuation ring of $K$.
\end{lemma}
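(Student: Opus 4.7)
The plan is to verify the two defining conditions for $T$-convexity of $\dot{\cO}$: that $\dot{\cO}$ is a nonempty convex subset of $K$, and that $F(\dot{\cO})\subseteq\dot{\cO}$ for every $\cL(\emptyset)$-definable continuous $F\colon K\to K$. The first is immediate, since $\dot{\cO}$ is the valuation ring of the Krull valuation $\dot v$ on $K$ (valuation rings on ordered fields whose valuations are compatible with the order are automatically convex), and since $0\in\Delta$ one has $\cO\subseteq\dot{\cO}$, so $\dot{\cO}$ is nonempty. The substantive work is the closure condition.

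For that, I fix an $\cL(\emptyset)$-definable continuous $F\colon K\to K$ and apply power boundedness to the two $\cL(\emptyset)$-definable functions $x\mapsto F(x)$ and $x\mapsto F(-x)$, taking the larger of the two resulting exponents. This yields $\lambda\in\Lambda$, which I may assume satisfies $\lambda\geq 0$, together with some $c_0\in K^{>0}$ such that $|F(x)|\leq|x|^\lambda$ for every $x\in K$ with $|x|\geq c_0$. Next I arrange $c_0\in\cO$: the set of valid thresholds is $\cL(\emptyset)$-definable, upward closed, and cofinal in $K^{>0}$, so by o-minimality it contains a final ray whose left endpoint lies in $\dclL(\emptyset)$. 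Applying the $T$-convexity of $\cO$ to constant functions with values in $\dclL(\emptyset)$ yields $\dclL(\emptyset)\subseteq\cO$, so $c_0$ can be chosen inside $\cO$.

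Finally, I take $a\in\dot{\cO}$ and split into two cases. If $|a|\leq c_0$ then $a\in[-c_0,c_0]\subseteq\cO$ by convexity of $\cO$, so $F(a)\in\cO\subseteq\dot{\cO}$ by the $T$-convexity of $\cO$. Otherwise $|a|>c_0$, whence $v(F(a))\geq\lambda v(a)$; choosing $\delta\in\Delta$ with $v(a)\geq\delta$ (possible since $a\in\dot{\cO}$) and using $\lambda\geq 0$ gives $v(F(a))\geq\lambda\delta$, with $\lambda\delta\in\Delta$ because $\Delta$ is a $\Lambda$-subspace, witnessing $F(a)\in\dot{\cO}$. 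The main obstacle lies in this last case: arranging the threshold $c_0$ inside $\cO$ requires combining o-minimality with the containment $\dclL(\emptyset)\subseteq\cO$, while the closing step $\lambda\delta\in\Delta$ is precisely where the hypothesis that $\Delta$ is a $\Lambda$-subspace, rather than a mere convex subgroup, is needed.
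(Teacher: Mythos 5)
Your argument is correct and follows essentially the same path as the paper's: invoke power boundedness of $F$, arrange the threshold to be $\cL(\emptyset)$-definable (hence of valuation $0$, or at least in $\cO$), split into the case where $a$ lands in $\cO$ versus the case where the power bound applies, and use that $\Delta$ is a $\Lambda$-subspace to conclude $\lambda va$ (or $\lambda\delta$) lies in $\Delta$. The only cosmetic divergence is the case split: the paper splits on $va\geq 0$ versus $va<0$, in the latter case observing that $va\in\Delta$ directly by convexity of $\Delta$ (so $\lambda va\in\Delta$ with no sign condition on $\lambda$ needed), whereas you split on $|a|\leq c_0$ versus $|a|>c_0$ and therefore need to normalize $\lambda\geq 0$ to pass from $va\geq\delta$ to $\lambda va\geq\lambda\delta$. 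Both routes are sound; the paper's is marginally tidier because it bypasses the normalization.
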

\begin{proof}
Let $F\colon K\to K$ be a continuous $\cL(\emptyset)$-definable function and let $a \in \dot{\cO}$. We need to show that $F(a)\in \dot{\cO}$. If $va \geq 0$, then $a \in \cO$, so $F(a) \in \cO\subseteq \dot{\cO}$. Suppose $va <0$, so $va \in \Delta^<$. Take $\lambda \in \Lambda$ and $b \in K^>$ such that $|F(y)|<|y|^\lambda$ for all $y\in K$ with $|y|>b$. As $F$ is $\cL(\emptyset)$-definable, we may assume $\{b\}$ is $\cL(\emptyset)$-definable as well, so $vb = 0$ and $|a|> b$. Then $|F(a)|<|a|^\lambda$, so $vF(a)\geq \lambda va \in \Delta$.
\end{proof}

\noindent
We let $K_\Delta$ denote the $\LO$-structure $(K,\dot{\cO})$, so $K_\Delta \models \TO$ by the above lemma. We refer to $K_\Delta$ as the \textbf{$\Delta$-coarsening of $K$}. The residue field $\res(K_\Delta)= \dot{\cO}/\dot{\smallo}$ is itself a $\TO$-model with value group $\Delta$ and 
$T$-convex valuation ring $\cO_{\res(K_\Delta)} = \{a+\dot{\smallo}:a \in \cO\}$. We refer to $\res(K_\Delta)$ with this valuation ring as the \textbf{$\Delta$-specialization of $K$}. Note that $\cO_{\res(K_\Delta)}/\smallo_{\res(K_\Delta)}$ is naturally $\cL$-isomorphic to $\res(K)$.


\begin{fact}[\cite{ADH17}, Corollary 3.4.6]
\label{fact-coarsespeccomp}
$K$ is spherically complete if and only if $K_\Delta$ and $\res(K_\Delta)$ are both spherically complete.
\end{fact}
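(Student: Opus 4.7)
\medskip\noindent
The plan is to use the equivalent characterization of spherical completeness via pseudo-Cauchy sequences (pc-sequences): a valued field is spherically complete if and only if every pc-sequence has a pseudo-limit. This formulation is more convenient than the nested-ball one for translating between the valuations $v$, $\dot v$, and the valuation on $\res(K_\Delta)$.

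\medskip\noindent
For the forward direction, suppose $K$ is spherically complete. To show $K_\Delta$ is spherically complete, I would take a pc-sequence $(a_\rho)$ in $(K,\dot v)$; after passing to a cofinal subsequence its widths $\dot\gamma_\rho := \dot v(a_{\rho+1} - a_\rho)$ are strictly increasing in $\dot\Gamma$. Setting $\gamma_\rho := v(a_{\rho+1} - a_\rho)$, the inequality $\dot\gamma_{\rho+1} > \dot\gamma_\rho$ translates to $\gamma_{\rho+1} - \gamma_\rho > \Delta$ in $\Gamma$, so $(a_\rho)$ is also pc in $(K,v)$; its pseudo-limit in $K$ is then automatically a pseudo-limit in $(K,\dot v)$. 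To show $\res(K_\Delta)$ is spherically complete, I would lift a pc-sequence $(\bar b_\rho)$ from $\res(K_\Delta)$ to a sequence $(b_\rho)$ in $\dot\cO$ whose widths lie in $\Delta$; then $(b_\rho)$ is pc in $(K,v)$, its pseudo-limit $b$ lies in $\dot\cO$ by the width condition, and the image $\bar b$ is a pseudo-limit of $(\bar b_\rho)$.

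\medskip\noindent
For the reverse direction, suppose both $K_\Delta$ and $\res(K_\Delta)$ are spherically complete, and take a pc-sequence $(a_\rho)$ in $(K,v)$ with strictly increasing widths $\gamma_\rho$. I would split into two cases. If some cofinal subsequence satisfies $\dot\gamma_{\rho+1} > \dot\gamma_\rho$, then the subsequence is pc in $(K,\dot v)$, so spherical completeness of $K_\Delta$ furnishes an $a \in K$ with $\dot v(a - a_\rho) = \dot\gamma_\rho$ eventually; the large-gap condition $\gamma_{\rho+1} - \gamma_\rho > \Delta$ would then force $v(a - a_\rho) = \gamma_\rho$ eventually, making $a$ a pseudo-limit in $(K,v)$. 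Otherwise $\dot\gamma_\rho$ eventually stabilizes, so eventually $\gamma_\rho \in \gamma + \Delta$ for a fixed $\gamma \in \Gamma$; I would fix $c \in K^\times$ with $vc = \gamma$ and, for $\rho$ large past some $\rho_0$, consider $b_\rho := c^{-1}(a_\rho - a_{\rho_0}) \in \dot\cO$. Then $(\bar b_\rho)$ is pc in $\res(K_\Delta)$; lifting any pseudo-limit $\bar b$ to $b \in \dot\cO$, the element $a := cb + a_{\rho_0}$ would be a pseudo-limit of $(a_\rho)$ in $(K,v)$.

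\medskip\noindent
The hard part will be the first subcase of the reverse direction: showing that the pseudo-limit in the coarsening $(K,\dot v)$ is also a pseudo-limit in the finer valuation $(K,v)$, rather than agreeing only up to an error in $\Delta$. This relies on choosing a cofinal subsequence along which the widths grow by more than $\Delta$; within such a subsequence, no drift of $v(a - a_\rho)$ inside a $\Delta$-coset is consistent with the pseudo-Cauchy relations. The remaining pieces—especially the rescaling in the second subcase and the verification that lifted pseudo-limits from $\res(K_\Delta)$ give back pseudo-limits in $(K,v)$—are routine but require care with representatives of $\Delta$-cosets.
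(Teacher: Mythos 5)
The paper does not prove this statement itself; it is quoted as Lemma~3.4.6 of~\cite{ADH17}, so there is no in-paper proof to compare against. Your pseudo-Cauchy argument is correct and is the standard one: the case split in the reverse direction is exhaustive because $\dot\gamma_\rho$ is weakly increasing (a transfinite weakly increasing sequence either eventually stabilizes or has a cofinal strictly increasing subsequence), and the step you flag as hard does go through, since $v(a-a_{\rho+1})\in\gamma_{\rho+1}+\Delta$ together with $\gamma_{\rho+1}-\gamma_\rho>\Delta$ gives $v(a-a_{\rho+1})>\gamma_\rho=v(a_{\rho+1}-a_\rho)$, whence the ultrametric inequality forces $v(a-a_\rho)=\gamma_\rho$.
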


\noindent
Let $M$ be a $\TO$-extension of $K_\Delta$ with $\Gamma_M = \dot{\Gamma}$. Let $\cO_{\res(M)}$ be a $T$-convex valuation ring of $\res(M)$ and suppose that the expansion of $\res(M)$ by $\cO_{\res(M)}$ is a $\TO$-extension of $\res(K_\Delta)$ with $\Gamma_{\res(M)}=\Delta$. Let $\cO_M^*\subseteq M$ be the convex subring
\[
\cO_M^*\ \coloneqq \ \{a \in M: a \in \cO_M\text{ and }\bar{a} \in \cO_{\res(M)}\}.
\]

\begin{lemma}
The convex subring $\cO_M^*$ is a $T$-convex valuation ring of $M$ and $\cO_M^*\cap K = \cO$.
\end{lemma}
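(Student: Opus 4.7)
The plan is to establish three items: that $\cO_M^*$ is a convex subring of $M$, that it is closed under every $\cL(\emptyset)$-definable continuous function $M\to M$, and that $\cO_M^*\cap K=\cO$.

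For the first item, $\cO_M^*$ is a subring of $M$ since the residue map $\cO_M\to\res(M)$ is a ring homomorphism and $\cO_{\res(M)}$ is a subring of $\res(M)$. Convexity will follow from the fact that the residue map is weakly order-preserving on $\cO_M$: if $a_1<a_2<a_3$ in $M$ with $a_1,a_3\in\cO_M^*$, then $a_2\in\cO_M$ by convexity of $\cO_M$, and $\bar{a_1}\leq\bar{a_2}\leq\bar{a_3}$ yields $\bar{a_2}\in\cO_{\res(M)}$ by convexity of $\cO_{\res(M)}$. To see that $\cO_M^*$ is a valuation ring of $M$, I will check that any $y\in M^\times\setminus\cO_M^*$ has $y\inv\in\cO_M^*$: either $y\notin\cO_M$, in which case $y\inv\in\smallo_M\subseteq\cO_M^*$; or $y\in\cO_M^\times$ with $\bar{y}\notin\cO_{\res(M)}$, in which case $\bar{y}\inv\in\cO_{\res(M)}$ by the valuation property of $\cO_{\res(M)}$, hence $y\inv\in\cO_M^*$.

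For the second item, given $a\in\cO_M^*$ and $F\colon M\to M$ an $\cL(\emptyset)$-definable continuous function, $T$-convexity of $\cO_M$ gives $F(a)\in\cO_M$. The key input is that under the natural $T$-model structure on $\res(M)$ from \cite[Remark 2.16]{DL95}, the residue map $\cO_M\to\res(M)$ commutes with every such $F$, i.e., $\overline{F(a)}=F(\bar{a})$ where $F$ on the right is interpreted in the $T$-model $\res(M)$. Since $\bar{a}\in\cO_{\res(M)}$ and $\cO_{\res(M)}$ is $T$-convex in $\res(M)$, we obtain $\overline{F(a)}\in\cO_{\res(M)}$, whence $F(a)\in\cO_M^*$. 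This compatibility between residuation and $\cL(\emptyset)$-definable continuous functions is the only nontrivial ingredient; everything else is routine bookkeeping with convexity and inclusions.

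For the third item, the inclusion $\cO\subseteq\cO_M^*\cap K$ is immediate: any $a\in\cO$ lies in $\dot\cO=\cO_M\cap K$, and its residue $a+\dot\smallo$ lies in $\cO_{\res(K_\Delta)}\subseteq\cO_{\res(M)}$. Conversely, if $a\in\cO_M^*\cap K$, then $a\in\dot\cO$ and $\bar{a}\in\cO_{\res(M)}\cap\res(K_\Delta)$. Since $\res(M)$ is a $\TO$-extension of $\res(K_\Delta)$, this intersection is $\cO_{\res(K_\Delta)}$, so $\bar{a}=b+\dot\smallo$ for some $b\in\cO$, and then $a-b\in\dot\smallo\subseteq\smallo\subseteq\cO$ gives $a\in\cO$.
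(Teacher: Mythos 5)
Your proof is correct and takes essentially the same approach as the paper: the key step, reducing $T$-convexity of $\cO_M^*$ to that of $\cO_{\res(M)}$ via the identity $\overline{F(a)}=F(\bar a)$, is exactly what the paper does (the precise reference for that identity is \cite[Lemma 1.13]{DL95} rather than Remark 2.16, which only sets up the $T$-model structure on the residue field). The extra verifications you supply --- that $\cO_M^*$ is a convex subring and that it is a valuation ring --- are treated as routine and omitted in the paper; note also that the valuation-ring check is redundant once $T$-convexity is in hand, since a $T$-convex convex subring of a $T$-model is automatically a valuation ring.
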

\begin{proof}
Let $a \in \cO_M^*$ and let $F\colon M\to M$ be an $\cL(\emptyset)$-definable continuous function. Since $\bar{a} \in \cO_{\res(M)}$ and $\cO_{\res(M)}$ is $T$-convex, we have $\overbar{F(a)} = F(\bar{a}) \in \cO_{\res(M)}$ by~\cite[Lemma 1.13]{DL95}. Thus, $F(a) \in \cO_M^*$, so $\cO_M^*$ is $T$-convex. The equality $\cO_M^*\cap K = \cO$ follows from the equivalence
\[
y \in \cO \ \Longleftrightarrow \ y\in \dot{\cO}\text{ and }y+\dot{\smallo} \in \cO_{\res(K_\Delta)}
\]
for $y \in K$.
\end{proof}

\noindent
Let $M^*$ be the $\TO$-model with underlying $T$-model $M$ and $T$-convex valuation ring $\cO_{M^*} = \cO_M^*$, as defined above. Then $M^*$ is a $\TO$-extension of $K$, and we have $M^*_\Delta = M$ (as $\TO$-models).

\begin{lemma}
$\Gamma_{M^*} = \Gamma$ and $\res(M^*)$ is naturally $\cL$-isomorphic to $\cO_{\res(M)}/\smallo_{\res(M)}$.
\end{lemma}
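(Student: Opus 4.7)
The approach is to recognize $\cO_{M^*}$ as the composition of the valuation rings $\cO_M$ on $M$ and $\cO_{\res(M)}$ on $\res(M)$, and then invoke standard composition-of-valuations facts (specifically, the short exact sequence on value groups and the iterated-residue isomorphism). Since the paper prefers to stay self-contained, I would reprove what is needed in a couple of short steps.

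For part (1), I would first verify that $\cO_M$ is the coarsening of $\cO_{M^*}$ corresponding to a convex subgroup $H\subseteq \Gamma_{M^*}$, and that $H=\Delta$. Indeed, $\cO_{M^*}\subseteq \cO_M$ by construction, and for $a\in \cO_M$ the definition of $\cO_{M^*}$ says $a\in \cO_{M^*}$ iff $\bar a\in \cO_{\res(M)}$; hence the valuation on $\cO_M/\smallo_M=\res(M)$ induced by $\cO_{M^*}$ is precisely the valuation associated to $\cO_{\res(M)}$, whose value group is $\Gamma_{\res(M)}=\Delta$. This yields a short exact sequence of ordered abelian groups
\[
0\ \longrightarrow\ \Delta\ \longrightarrow\ \Gamma_{M^*}\ \longrightarrow\ \Gamma_M=\dot\Gamma\ \longrightarrow\ 0.
\]
Since $M^*$ extends $K$ as a $\TO$-model (previous lemma), we have $\Gamma\hookrightarrow \Gamma_{M^*}$, with $\Delta\subseteq\Gamma$ and the composition $\Gamma\hookrightarrow \Gamma_{M^*}\twoheadrightarrow\dot\Gamma$ being the natural quotient (which is surjective). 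Given $\gamma\in \Gamma_{M^*}$, pick $b\in K^\times$ with $\dot v(b)$ equal to the image of $\gamma$ in $\dot\Gamma$; then $\gamma-v(b)\in \Delta\subseteq\Gamma$, so $\gamma\in\Gamma$, giving $\Gamma_{M^*}=\Gamma$.

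For part (2), I would define
\[
\psi\colon \cO_{M^*}\ \longrightarrow\ \cO_{\res(M)}/\smallo_{\res(M)},\qquad a\ \longmapsto\ \bar a+\smallo_{\res(M)}.
\]
This is a well-defined surjective ring homomorphism: well-definedness follows from the condition $\bar a\in \cO_{\res(M)}$ built into the definition of $\cO_{M^*}$, and surjectivity is immediate since any representative of a class in $\cO_{\res(M)}/\smallo_{\res(M)}$ lifts to an element of $\cO_M$, which automatically lies in $\cO_{M^*}$. For the kernel, one splits into cases: if $a\in\smallo_M$ then $v(a)>0$ in $\dot\Gamma$, which forces $v^*(a)>\Delta>0$ in $\Gamma_{M^*}$, so $a\in\smallo_{M^*}$; if $a\in\cO_M^\times$ then $v^*(a)$ lies in $\Delta$ and coincides with $v_{\res}(\bar a)$, so $\psi(a)=0$ iff $v_{\res}(\bar a)>0$ iff $a\in\smallo_{M^*}$. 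Hence $\ker\psi=\smallo_{M^*}$, yielding a ring isomorphism $\res(M^*)\cong \cO_{\res(M)}/\smallo_{\res(M)}$. That this is an $\cL$-isomorphism is automatic from the construction in~\cite[Remark 2.16]{DL95}: both sides inherit their $\cL$-structure from the same ambient $T$-convex valuation ring via the obvious quotient map.

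The main obstacle is purely bookkeeping: keeping the three valuations ($v^*$, $v$, $v_{\res}$) and their residue maps straight, and confirming that the coarsening of $\cO_{M^*}$ inside $M$ really is $\cO_M$ with associated convex subgroup $\Delta$. Once this identification is nailed down, both claims are immediate.
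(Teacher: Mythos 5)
Your proof is correct, and for part (2) it is essentially identical to the paper's (same map $\psi$, same surjectivity and kernel computation, just with a slightly more detailed case split). For part (1), you package the argument via the standard short exact sequence $0\to\Delta\to\Gamma_{M^*}\to\dot\Gamma\to 0$ for composite valuations, whereas the paper argues directly at the element level: given $a\in (M^*)^\times$, it first finds $f\in K^\times$ with $af^{-1}\in\cO_M^\times$ (using $\Gamma_M=\dot\Gamma$), then lifts the residue adjustment to $g\in\dot{\cO}^\times$ (using $\Gamma_{\res(M)}=\Delta$), and sets $b=fg$. These are the same two-step descent, just phrased at different levels of abstraction; your version makes the coarsening structure explicit (and requires you to actually verify that the convex subgroup $H=v^*(\cO_M^\times)$ equals $\Delta$ inside $\Gamma_{M^*}$, which you gesture at but don't fully spell out — though the identification is routine via the residue map $\cO_M^\times/\cO_{M^*}^\times\cong\res(M)^\times/\cO_{\res(M)}^\times$), while the paper's version is self-contained and avoids invoking the SES machinery. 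Both are fine.
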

\begin{proof}
As $M^*$ is a $\TO$-extension of $K$, we have $\Gamma\subseteq \Gamma_{M^*}$. For the other inclusion, let $a \in (M^*)^\times$. We need to find $b \in K^\times$ with $ab\inv \in \cO_{M^*}^\times$. First, take $f \in K_\Delta^\times$ with $af\inv \in \cO_M^\times$ and set $u \coloneqq \overbar{af\inv} \in \res(M)^\times$. Next, take $g \in \dot{\cO}^\times$ with $u\bar{g}\inv \in \cO_{\res(M)}^\times$. Then for $b \coloneqq fg$, we have $ab\inv \in \cO_{M^*}^\times$, as desired. As for the residue field, note that $\{\bar{a}:a \in \cO_{M^*}\} = \cO_{\res(M)}$, so we have a surjection
\[
a\mapsto \bar{a} + \smallo_{\res(M)}\colon \cO_{M^*}\to \cO_{\res(M)}/\smallo_{\res(M)}
\]
with kernel $\smallo_{M^*}$. This induces the desired isomorphism $\res(M^*)\to \cO_{\res(M)}/\smallo_{\res(M)}$.
\end{proof}

\noindent
By the above lemma, we see that $M^*$ is an immediate extension of $K$ if and only if $\res(M)$ is an immediate extension of $\res(K_\Delta)$. We summarize the discussion above with a diagram:
\[
\begin{tikzcd}
K_\Delta \arrow[r,"\LO" '] \arrow[d]& M \arrow[d]&\hspace{-1.7cm}=M^*_\Delta \\
\res(K_\Delta)\arrow[r,"\LO" '] \arrow[d]& \res(M)\arrow[d]\\
\res(K) \arrow[r,"\cL" '] &\res(M^*)
\end{tikzcd}
\]
Horizontal arrows are all embeddings in the indicated language. Downward arrows are projections\footnote{Often called \emph{places} in the literature.} and are only defined on the $T$-convex valuation ring of their source. Every square commutes.

\section{$T$-derivations}\label{sec-Tderiv}
\noindent
In this section, we fix a map $\der\colon K \to K$. For $a \in K$, we use $a'$ or $\der a$ in place of $\der(a)$, and we use $a^{(r)}$ in place of $\der^r(a)$. We define the \textbf{jets of $a$} as follows:
\[
\jet^r(a)\ \coloneqq \ (a, a',\ldots, a^{(r)}),\qquad \jet^\infty(a)\ \coloneqq\ (a^{(n)})_{n \in \N}.
\]
Occasionally, we omit the parentheses and write $\jet^r a$ or $\jet^\infty a$. To make some statements cleaner, we let $\jet^{-1}(a) \coloneqq 0 \in K^0$. Given a tuple $b = (b_1, \ldots, b_n) \in K^n$, we use $\der b$ or $b'$ to denote the tuple $(b_1', \ldots, b_n')$. We also let $\jet^r(b) \coloneqq (\jet^rb_1,\ldots,\jet^rb_n) \in K^{n(1+r)}$. Given a set $B \subseteq K^n$, we let
\[
\der B\ \coloneqq \ \{b':b \in B\},\qquad \jet^r(B)\ \coloneqq \ \big\{\jet^r(b):b \in B\big\}.
\]

\medskip\noindent
Given an $\cL(\emptyset)$-definable $\cC^1$-function $F\colon U \to K$ with $U \subseteq K^n$ open, we say that $\der$ is \textbf{compatible with $F$} if
\[
F(u)'\ =\ \nabla F(u) \cdot u'
\]
for each $u\in U$. We say that $\der$ is a \textbf{$T$-derivation on $K$} if $\der$ is compatible with every $\cL(\emptyset)$-definable $\cC^1$-function with open domain.

\medskip\noindent
The basic properties of $T$-derivations were systematically studied in~\cite{FK19}. Compatibility with the functions $(x,y) \mapsto x+y$ and $(x,y) \mapsto xy$ gives that any $T$-derivation on $K$ is a derivation on $K$. Let $\Ld \coloneqq \cL \cup \{ \der\}$, and let $\Td$ be the $\Ld$-theory which extends $T$ by axioms asserting that $\der$ is a $T$-derivation. For the rest of this section, let $K = (K,\der) \models \Td$. We let $C \coloneqq \ker(\der)$ denote the constant field of $K$. By~\cite[Lemma 2.3]{FK19}, the constant field $C$ is the underlying set of an elementary $\cL$-substructure of $K$. We recall three facts from~\cite{FK19} for later use:

\begin{fact}[\cite{FK19}, Lemma 2.12]
\label{fact-basicTderivation2}
Let $U\subseteq K^n$ be open and let $F\colon U\to K$ be an $\cL(K)$-definable $\cC^1$-function. Then there is a (necessarily unique) $\cL(K)$-definable function $F^{[\der]}\colon U \to K$ such that
\[
F(u)'\ =\ F^{[\der]}(u)+\nabla F(u)\cdot u'
\]
for all $u \in U$.
\end{fact}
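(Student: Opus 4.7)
The plan is to reduce to the case of an $\cL(\emptyset)$-definable function, where the compatibility axiom for $\der$ applies directly. Since $F$ is $\cL(K)$-definable, I first fix a parameter tuple $a = (a_1, \ldots, a_k) \in K^k$ and produce an $\cL(\emptyset)$-definable $\cC^1$-function $G \colon V \to K$, with $V \subseteq K^{n+k}$ open and $U \times \{a\} \subseteq V$, such that $F(y) = G(y, a)$ for every $y \in U$. Existence of such a $G$ follows from o-minimal $\cC^1$-cell decomposition applied to any $\cL(\emptyset)$-definable Skolem lift $G_0$ of $F$: at each $(u, a)$ with $u \in U$, the graph $\Graph(F)$ is already a $\cC^1$-submanifold of dimension $n$, which forces the cell through $(u, a, F(u))$ to project onto an $(n+k)$-dimensional open subset of $K^{n+k}$ on which $G_0$ is $\cC^1$; take $V$ to be the union of these open projections and $G := G_0|_V$.

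With $G$ in hand, apply compatibility of $\der$ with $G$ at the point $(u, a) \in V$. Writing the Jacobian $\Jac_G$ as the concatenation $[\,\Jac_{G,y} \mid \Jac_{G,z}\,]$ of its blocks of partials in the first $n$ and last $k$ coordinates respectively, the compatibility axiom gives
\[
F(u)' \;=\; G(u, a)' \;=\; \Jac_G(u, a) \begin{pmatrix} u' \\ a' \end{pmatrix} \;=\; \Jac_{G,y}(u, a)\, u' \;+\; \Jac_{G,z}(u, a)\, a'.
\]
Since $G(\cdot, a) = F$ on $U$, the first block satisfies $\Jac_{G,y}(u, a) = \Jac_F(u)$. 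Setting $F^{[\der]}(u) := \Jac_{G,z}(u, a)\, a'$ then produces a function $F^{[\der]} \colon U \to K$ which is $\cL(K)$-definable (its only parameters are the entries of $a$ and $a'$, both tuples from $K$) and satisfies the required identity.

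Uniqueness is immediate from the form of the identity: if $H_1, H_2 \colon U \to K$ both satisfy it, then $H_i(u) = F(u)' - \Jac_F(u)\, u'$ for all $u \in U$ and both $i$, so $H_1 = H_2$ as functions on $U$. The only substantive obstacle is the opening cell-decomposition step: producing a $\cC^1$-extension of $F$ to an open neighborhood of $U \times \{a\}$ in $K^{n+k}$. This is routine for o-minimal theories, but requires care because $F$ is only assumed $\cC^1$ in its $y$-variables and we need $\cC^1$-regularity jointly in $(y, z)$ along the hyperplane $z = a$; the dimension argument above (the graph of $F$ sits inside a top-dimensional $\cC^1$-cell of the decomposition because its tangent directions already span an $n$-dimensional subspace at each point of $U \times \{a\}$) is what makes this work.
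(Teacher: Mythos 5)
The overall strategy — Skolemize $F$ as $F(y) = G(y,a)$ for an $\cL(\emptyset)$-definable $G$, apply compatibility of $\der$ with $G$, split the Jacobian into $y$- and $z$-blocks, and set $F^{[\der]}(u) := \Jac_{G,z}(u,a)\,a'$ — is the right idea, and the uniqueness argument and the chain-rule computation that close the proof are fine. The gap is precisely in the step you flag as the ``only substantive obstacle,'' and the justification you give for it is not correct.

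Concretely, it is false that for an arbitrary $\cL(\emptyset)$-definable Skolem lift $G_0$ of $F$, the cell of a $\cC^1$-cell decomposition through $(u,a)$ must be open (equivalently, project onto an $(n+k)$-dimensional open set). Take $T=\RCF$, $n=k=1$, $G_0(y,z)=|z|\,y$, and $a=0$, so $F\equiv 0$ on $U=K$, which is certainly $\cC^1$. Any $\cC^1$-cell decomposition of $K^2$ compatible with $G_0$ must isolate the line $\{z=0\}$ as a union of $1$-dimensional cells, because $G_0$ is not differentiable in $z$ there; hence every $(u,0)$ with $u\in U$ lies in a cell of dimension $1<n+k$, and $G_0$ has no $\cC^1$-extension to any open neighborhood of $U\times\{0\}$. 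The appeal to ``$\Graph(F)$ is a $\cC^1$-submanifold of dimension $n$'' doesn't force anything: an $n$-dimensional $\cC^1$-submanifold can lie entirely inside lower-dimensional cells of a decomposition compatible with a function $G_0$ that is non-smooth transversally to it, exactly as in this example. To make the argument go through one must be allowed to \emph{change} the lift — e.g.\ replace $|z|y$ by $zy$, $0$, or $-zy$ according to the sign of $a$ — and this choice has to be made uniformly definably before the cell decomposition is invoked, which requires a genuinely different argument than ``decompose a fixed $G_0$.'' As written, your proof of the existence of the joint $\cC^1$-extension $G$ does not hold, and with it the construction of $F^{[\der]}$. (Note also that this statement is imported from \cite{FK19} as a Fact, so the paper itself gives no proof to compare against; the judgment here is only on the blind attempt.)
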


\begin{fact}[\cite{FK19}, Lemma 2.13]
\label{fact-transext}
Let $M$ be a $T$-extension of $K$, let $A$ be a basis for $M$ over $K$, and let $s\colon A\to M$ be a map. There is a unique extension of $\der$ to a $T$-derivation on $M$ such that $a'= s(a)$ for all $a \in A$.
\end{fact}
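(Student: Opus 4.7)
The plan is to reduce to the case where $A = \{a\}$ is a single element. In general, I transfinitely enumerate $A = (a_\xi)_{\xi < \kappa}$ and extend $\der$ stepwise to $K_\xi := K\langle a_\eta : \eta < \xi\rangle$ using the single-element result at successor stages and unions at limits; uniqueness at each stage propagates to uniqueness on $M$. So it suffices to treat $M = K\langle a\rangle$ with $a$ a $\dclL$-transcendental over $K$.

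Since $T$ has definable Skolem functions, every $b \in M$ has the form $F(a)$ for some $\cL(K)$-definable function $F$, and by o-minimal monotonicity $F$ is $\cC^1$ on some open interval around $a$. Fact~\ref{fact-basicTderivation2} then forces any extension $\der_M$ of $\der$ satisfying $\der_M a = s(a)$ to obey
\[
\der_M F(a)\ =\ F^{[\der]}(a) + F'(a)\, s(a),
\]
which gives uniqueness. For existence, I take this as the definition. Well-definedness: if $F_1(a) = F_2(a)$, then the $\cL(K)$-definable set on which $F_1 = F_2$ contains $a$, and by $\dclL$-transcendence of $a$ and o-minimality it contains an open interval around $a$. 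On that interval $F_1' = F_2'$, and applying the uniqueness clause of Fact~\ref{fact-basicTderivation2} to $F_1 - F_2 \equiv 0$ yields $F_1^{[\der]} = F_2^{[\der]}$ there, so the two defining formulas agree at $y = a$. Additivity and the Leibniz rule follow by applying Fact~\ref{fact-basicTderivation2} to the sum and product functions.

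The main obstacle is verifying compatibility with every $\cL(\emptyset)$-definable $\cC^1$-function $G \colon U \to K$ with $U \subseteq K^n$ open, at $u = (F_1(a), \ldots, F_n(a)) \in U^M$. Setting $H := G \circ (F_1, \ldots, F_n)$, I compute $\der_M G(u) = \der_M H(a) = H^{[\der]}(a) + H'(a)\, s(a)$ by definition. Using that $\der$ on $K$ is already compatible with $G$, applying Fact~\ref{fact-basicTderivation2} to each $F_i$, and invoking the ordinary chain rule, I derive the identity
\[
H^{[\der]}(y)\ =\ \sum_{i=1}^{n}\frac{\partial G}{\partial y_i}\bigl(F_1(y), \ldots, F_n(y)\bigr)\, F_i^{[\der]}(y)
\]
as an equality of $\cL(K)$-definable functions on a suitable open subset of $K$. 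By $\cL$-elementarity the identity persists at $y = a$, and combining with the ordinary chain rule for $H'(a)$ then yields $\der_M G(u) = \Jac_G(u)\, \der_M u$, completing the verification.
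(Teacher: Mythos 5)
Your proof is correct. Note that the paper states this as a Fact quoted from~\cite{FK19} (Lemma 2.13) without including its own proof, so the natural basis of comparison is the source paper, where the argument runs along exactly the lines you take: reduce to simple extensions $K\langle a\rangle$, use $F^{[\der]}$ from Fact~\ref{fact-basicTderivation2} to write down the value $\der_M F(a) = F^{[\der]}(a) + F'(a)s(a)$ forced by compatibility, check well-definedness via o-minimality and the transcendence of $a$, and then verify compatibility with an arbitrary $\cL(\emptyset)$-definable $\cC^1$-function by expressing $H^{[\der]}$ for $H = G\circ(F_1,\ldots,F_n)$ through the chain rule and transferring the resulting $\cL(K)$-definable identity to $y = a$ by elementarity. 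One small remark: your separate appeal to additivity and the Leibniz rule is redundant, since both are instances of the compatibility check you carry out (take $G(y_1,y_2) = y_1 + y_2$ and $G(y_1,y_2) = y_1 y_2$); and the reason every $b \in K\langle a\rangle$ has the form $F(a)$ is the definition of $\dclL$ rather than definable Skolem functions per se (Skolem functions are what make $K\langle a\rangle$ an elementary substructure). Neither affects the correctness of the argument.
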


\subsection{Affine conjugation}
\noindent
In this subsection, fix $r\geq 0$ and let $F\colon K^{1+r}\to K$ be an $\cL(K)$-definable function. For $k =0,\ldots, r$, we identify each variable $Y_k$ with the $k^{\text{\tiny th}}$ coordinate function $K^{1+r}\to K$. We let $Y= (Y_0,\ldots,Y_r)$, so $Y\colon K^{1+r}\to K^{1+r}$ is the identity map.

\begin{definition}
$F$ is said to be in \textbf{implicit form} if
\[
F\ =\ \fm_F\big(Y_r- I_F(Y_0,\ldots,Y_{r-1})\big)
\]
for some $\fm_F \in K^\times$ and some $\cL(K)$-definable function $I_F\colon K^r\to K$.
\end{definition}

\noindent
If $F$ is in implicit form, then $F(a,b) = 0$ if and only if $b = I_F(a)$ for $a\in K^r$ and $b \in K$. Thus, $I_F$ is an \emph{implicit function} for $F$. This is the source of the name ``implicit form'' and the notation $I_F$. By our convention for nullary functions, the unary functions in implicit form are exactly the functions of the form $\fm(Y_0- d)$ where $\fm \in K^\times$ and $d \in K$. Often, we omit the variables $Y_0,\ldots,Y_{r-1}$ and just write $F = \fm_F(Y_r-I_F)$ for $F$ in implicit form. 

\medskip\noindent
We may associate to $F$ the unary $\Ld(K)$-definable function $y \mapsto F(\jet^r y)$. For $k\leq r$ and $y \in K$, we have $Y_k(\jet^ry)= y^{(k)}$. As is the case with differential polynomials, these functions $y \mapsto F(\jet^r y)$ can be additively and multiplicatively conjugated (an operation we call \emph{affine conjugation}). They can also be compositionally conjugated, as we will see in Subsection~\ref{subsec-compconj}.

\medskip\noindent
Fix $a \in K$ and $d \in K^\times$. We let $Y^\der_{+a,\times d} = \big((Y_0)^\der_{+a,\times d},\ldots,(Y_r)^\der_{+a,\times d}\big)\colon K^{1+r}\to K^{1+r}$ be the map with coordinate functions
\[
(Y_k)^\der_{+a,\times d}\ \coloneqq \ a^{(k)}+\sum_{i=0}^k \binom{k}{i}d^{(k-i)}Y_i,\qquad k = 0,\ldots,r.
\]
Then $Y^\der_{+a,\times d}$ is a bijective $K$-affine map, and $Y^\der_{+a,\times d}(\jet^r y) = \jet^r(dy+a)$ for $y \in K$. We let $F^\der_{+a,\times d} \coloneqq F\circ Y^\der_{+a,\times d}$, so $F^\der_{+a,\times d}$ is $\cL(K)$-definable and
\[
F^\der_{+a,\times d}(\jet^ry)\ =\ F\big(\jet^r(dy+a)\big)
\]
for each $y \in K$. When $\der$ is clear from context, we drop the superscript and just write $F_{+a,\times d}$. For notational simplicity, we use $F_{+a}$ in place of $F_{+a,\times 1}$ and $F_{\times d}$ in place of $F_{+0,\times d}$. We also set $F_{-a,\times d} \coloneqq F_{+(-a),\times d}$. A straightforward computation gives us the following:

\begin{lemma}
\label{lem-addmultconj}
Suppose $F$ is in implicit form. Then $F_{+a,\times d}$ is also in implicit form with
\[
\fm_{F_{+a,\times d}} \ =\ d\fm_F,\qquad I_{F_{+a,\times d}}\ =\ d\inv\Big((I_F)_{+a,\times d} - a^{(r)}-\sum_{i=0}^{r-1} \binom{r}{i}d^{(r-i)}Y_i\Big).
\]
\end{lemma}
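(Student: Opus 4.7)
The proof will be a direct computation: I simply expand the definition of $F_{+a,\times d}$ using the fact that $F$ is in implicit form, and then identify the pieces.

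First I would write
\[
F_{+a,\times d}\ =\ F\circ Y^\der_{+a,\times d}\ =\ \fm_F\big((Y_r)^\der_{+a,\times d}\ -\ I_F\circ\pi_r\!\circ Y^\der_{+a,\times d}\big),
\]
noting that since $I_F$ depends only on the first $r$ coordinates and $(Y_k)^\der_{+a,\times d}$ for $k<r$ is exactly the $k$-th coordinate of the analogous affine conjugation map on $K^r$, the composition $I_F\circ\pi_r\!\circ Y^\der_{+a,\times d}$ is precisely $(I_F)_{+a,\times d}$ (viewed as a function of $Y_0,\ldots,Y_{r-1}$).

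Next I would split off the leading term in $(Y_r)^\der_{+a,\times d}$. By definition,
\[
(Y_r)^\der_{+a,\times d}\ =\ dY_r\ +\ a^{(r)}\ +\ \sum_{i=0}^{r-1}\binom{r}{i}d^{(r-i)}Y_i,
\]
since the $i=r$ term in the defining sum contributes $\binom{r}{r}d\,Y_r=dY_r$. Substituting this and factoring $d$ (which is nonzero) out of the bracket yields
\[
F_{+a,\times d}\ =\ d\fm_F\Big(Y_r\ -\ d^{-1}\Big((I_F)_{+a,\times d}\ -\ a^{(r)}\ -\ \sum_{i=0}^{r-1}\binom{r}{i}d^{(r-i)}Y_i\Big)\Big),
\]
and reading off the coefficient and the ``implicit function'' gives exactly the asserted formulas for $\fm_{F_{+a,\times d}}$ and $I_{F_{+a,\times d}}$.

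There is no real obstacle — the only subtlety is bookkeeping the domains: one must recognize that $I_F$ takes $r$ arguments (so its conjugation uses jets of order $r-1$) while $F$ takes $r+1$ arguments, which is why the $i=r$ term of $(Y_r)^\der_{+a,\times d}$ is the only one that is not absorbed into $(I_F)_{+a,\times d}$. Once this is made explicit, the rest is a one-line factorization.
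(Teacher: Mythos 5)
Your proof is correct and follows essentially the same route as the paper: expand $F_{+a,\times d}$ using the implicit form of $F$, identify $I_F\circ\pi_r\circ Y^\der_{+a,\times d}$ with $(I_F)_{+a,\times d}$, split off the $i=r$ term of $(Y_r)^\der_{+a,\times d}$, and factor out $d$. The only difference is that you spell out explicitly why the composition through $\pi_r$ is the $r$-ary affine conjugate of $I_F$, a step the paper leaves implicit.
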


\noindent
Given an $\cL(K)$-definable set $A\subseteq K^{1+r}$, we set
\[
A^\der_{+a,\times d}\ \coloneqq \ \big\{u \in K^{1+r}:Y^\der_{+a,\times d}(u)\in A\big\},
\]
so $A^\der_{+a,\times d}$ is $\cL(K)$-definable, $\dimL(A^\der_{+a,\times d}) = \dimL(A)$, and $\jet^r(y) \in A^\der_{+a,\times d} \Longleftrightarrow \jet^r(dy+a) \in A$ for $y \in K$. Again, we drop the superscript if $\der$ is clear from context. 
\subsection{Compositional conjugation}\label{subsec-compconj}
In this subsection, $r\geq 0$ is again fixed and $F\colon K^{1+r}\to K$ is an $\cL(K)$-definable function. We continue to identify $Y= (Y_0,\ldots,Y_r)$ with the identity map and each $Y_k$ with the $k^{\text{\tiny th}}$ coordinate function.

\medskip\noindent
Fix $\phi \in K^\times$. Then $\derdelta \coloneqq \phi\inv\der$ is also a $T$-derivation on $K$, since for each $\cL(\0)$-definable $\cC^1$-function $G\colon U \to K$ with $U \subseteq K^n$ open and for each $u \in U$, we have
\[
\derdelta G(u)\ =\ \phi\inv\der G(u)\ =\ \phi\inv\big(\nabla G(u)\cdot \der u\big)\ =\ \nabla G(u) \cdot \derdelta u.
\]
We let $K^\phi = (K,\derdelta)$ be the expansion of the $\cL$-structure $K$ by the $T$-derivation $\derdelta$, and we refer to $K^\phi$ as the \textbf{compositional conjugate of $K$ by $\phi$}. For $\psi \in K^\times$, we have $(K^\phi)^\psi = K^{\phi\psi}$. 

\medskip\noindent
Let $n \geq 0$. Subsection 5.7 in~\cite{ADH17} gives for each $k\leq n$ an element $\xi^n_k(\phi)\in \Q[\phi,\der\phi,\ldots,\der^n\phi]$ such that
\[
\der^ny\ =\ \xi^n_0(\phi)y+ \xi^n_1(\phi)\derdelta y+\cdots+\xi^n_n(\phi)\derdelta^ny.
\]
In~\cite{ADH17}, $\xi^n_k(\phi)$ is instead called $F^n_k(\phi)$; we use different notation here and reserve $F$ for definable functions. The values of $\xi^n_k(\phi)$ are given by the recurrence relation:
\[
\xi^n_n(\phi) \ =\ \phi^n,\qquad \xi^n_0(\phi) \ =\ 0\ \text{ for }n>0,\qquad \xi^{n+1}_k(\phi)\ =\ \der\xi^n_k(\phi)+ \phi\xi^n_{k-1}(\phi)\ \text{ for }0<k\leq n.
\]
Let $Y^\phi_\der$ be the $K$-linear map $K^{1+r}\to K^{1+r}$ with matrix
\[
\begin{pmatrix}
\xi_0^0(\phi) & 0& 0 & \cdots & 0\\
\xi_0^1(\phi) & \xi_1^1(\phi) & 0 & \cdots & 0\\
\xi_0^2(\phi) & \xi_1^2(\phi) & \xi_2^2(\phi) & \cdots &0\\
\vdots & \vdots & \vdots & \ddots & \vdots\\
\xi_0^r(\phi) & \xi_1^r(\phi) & \xi_2^r(\phi) & \cdots & \xi_r^r(\phi)
\end{pmatrix}.
\]
Then $Y^\phi_\der$ is bijective and $Y^\phi_\der(\deltajet^r y) =\jet^r(y)$ for each $y \in K$.

\begin{lemma}\label{lem-prod}
$Y^1_\der = Y$ and $Y^\phi_\der \circ Y^\psi_\derdelta = Y^{\phi\psi}_\der$ for $\psi \in K^\times$. Given $a \in K$ and $d \in K^\times$, we have $Y^\der_{+a,\times d}\circ Y_\der^\phi =Y_\der^\phi\circ Y^\derdelta_{+a,\times d}$.
\end{lemma}
\begin{proof}
We will prove the last identity; the other identities can be established using a similar argument. Fix $a \in K$ and $d \in K^\times$. Since $Y^\der_{+a,\times d}\circ Y_\der^\phi$ and $Y_\der^\phi\circ Y^\derdelta_{+a,\times d}$ are both $K$-affine maps, the identity $Y^\der_{+a,\times d}\circ Y_\der^\phi =Y_\der^\phi\circ Y^\derdelta_{+a,\times d}$ holds on an affine subspace of $K^{1+r}$. Thus, it suffices to show that this identity holds on a dense subset of $K^{1+r}$. Let $U \subset K^{1+r}$ be $\cL(K)$-definable and open. By applying~\cite[Lemma 4.2]{FK19} to $K^\phi$, we find a $\Td$-extension $M = K\langle \deltajet^r b\rangle$ of $K^\phi$ with $\deltajet^r(b)\in U^M$. We have
\[
Y^\der_{+a,\times d}\big(Y_\der^\phi(\deltajet^r b)\big)\ =\ Y^\der_{+a,\times d}(\jet^r b)\ =\ \jet^r(db+a)\ =\ Y_\der^\phi\big(\deltajet^r(db+a)\big) \ =\ Y_\der^\phi\big(Y^\derdelta_{+a,\times d}(\deltajet^r b)\big).
\]
Since $Y^\der_{+a,\times d}\circ Y_\der^\phi$ and $Y_\der^\phi\circ Y^\derdelta_{+a,\times d}$ are $\cL(K)$-definable and $K$ is an elementary $\cL$-substructure of $M$, there is a tuple $u \in U$ with $Y^\der_{+a,\times d}\big(Y_\der^\phi(u)\big)= Y_\der^\phi\big(Y^\derdelta_{+a,\times d}(u)\big)$.
\end{proof}

\noindent
We set $F^\phi_\der \coloneqq F\circ Y^\phi_\der$, so $F^\phi_\der$ is $\cL(K)$-definable and $F^\phi_\der(\deltajet^ry) = F(\jet^ry)$ for all $y \in K$. Lemma~\ref{lem-prod} tells us that $F^1_\der = F$ and that
\[
(F^\phi_\der)^\psi_\derdelta \ =\ F^{\phi\psi}_\der\ \text{ for }\psi \in K^\times,\qquad (F^\der_{+a,\times d})_\der^\phi\ =\ (F_\der^\phi)^\derdelta_{+a,\times d}\ \text{ for $a \in K$ and $d \in K^\times$}.
\]
When $\der$ is clear from context, we write $F^\phi$ in place of $F^\phi_\der$, and we write $F^\phi_{+a,\times d}$ in place of $(F^\der_{+a,\times d})_\der^\phi$. Then $F^\phi_{+a,\times d}(\deltajet^r y)=F\big(\jet^r(dy+a)\big)$ for $y$ in $K$.
As with affine conjugation, we have the following:

\begin{lemma}
\label{lem-compconj}
Suppose $F$ is in implicit form. Then $F^\phi$ is also in implicit form with
\[
\fm_{F^\phi} \ =\ \phi^r\fm_F,\qquad I_{F^\phi}\ =\ \phi^{-r}\Big(I_F^\phi - \sum_{i=0}^{r-1} \xi^r_i(\phi)Y_i\Big).
\]
\end{lemma}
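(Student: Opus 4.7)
The plan is a direct computation mirroring the proof of Lemma~\ref{lem-addmultconj}. Starting from $F = \fm_F(Y_r - I_F)$ and using that $Y^\phi_\der$ is a $K$-linear map, composition distributes to give
$$F^\phi\ =\ F\circ Y^\phi_\der\ =\ \fm_F\big((Y_r)^\phi_\der - I_F\circ Y^\phi_\der\big),$$
so the task reduces to identifying each of the two summands inside the parentheses.

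First, I would read $(Y_r)^\phi_\der$ off the matrix of $Y^\phi_\der$: the $r^{\text{th}}$ row gives $(Y_r)^\phi_\der = \sum_{i=0}^r \xi^r_i(\phi) Y_i$, and by the recurrence in~(\ref{eqn-xi}) we have $\xi^r_r(\phi)=\phi^r$, so
$$(Y_r)^\phi_\der\ =\ \phi^r Y_r\,+\,\sum_{i=0}^{r-1}\xi^r_i(\phi)Y_i.$$
Second, I would verify that $I_F\circ Y^\phi_\der$ (regarded as a function of $Y_0,\ldots,Y_{r-1}$) agrees with the compositional conjugate $I_F^\phi$ of the $\cL(K)$-definable function $I_F\colon K^r\to K$. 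This is immediate from the lower-triangular form of the matrix: for $k\leq r-1$ the coordinate $(Y_k)^\phi_\der$ depends only on $Y_0,\ldots,Y_{k}$, and the recurrence~(\ref{eqn-xi}) defining the coefficients $\xi^k_i(\phi)$ makes no reference to the ambient dimension, so the top-left $r\times r$ block of the matrix of $Y^\phi_\der$ on $K^{1+r}$ coincides with the matrix of the analogous conjugation map on $K^r$ used to define $I_F^\phi$.

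Combining the two observations,
$$F^\phi\ =\ \fm_F\Big(\phi^r Y_r + \sum_{i=0}^{r-1}\xi^r_i(\phi)Y_i - I_F^\phi\Big)\ =\ \phi^r\fm_F\Big(Y_r - \phi^{-r}\Big(I_F^\phi - \sum_{i=0}^{r-1}\xi^r_i(\phi)Y_i\Big)\Big).$$
Since $\phi,\fm_F\in K^\times$ we have $\phi^r\fm_F\in K^\times$, and the expression inside the parentheses is $\cL(K)$-definable, so this exhibits $F^\phi$ in implicit form with $\fm_{F^\phi}=\phi^r\fm_F$ and $I_{F^\phi}=\phi^{-r}\big(I_F^\phi - \sum_{i=0}^{r-1}\xi^r_i(\phi)Y_i\big)$, as desired.

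There is no real obstacle here: the lemma is essentially bookkeeping once one expands the definitions of $F^\phi$ and $Y^\phi_\der$. The only point I would flag explicitly is the dimension-compatibility of the notation $I_F^\phi$ (the second step above), since this is where one has to pause and notice that the lower-triangular structure of $Y^\phi_\der$ makes the conjugate of $I_F$ the same object whether one thinks of $I_F$ as living on $K^r$ or as a function on $K^{1+r}$ that ignores $Y_r$.
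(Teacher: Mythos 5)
Your proof is correct and follows essentially the same route as the paper: expand $F^\phi = \fm_F\big((Y_r)^\phi_\der - I_F^\phi\big)$, substitute $(Y_r)^\phi_\der = \phi^r Y_r + \sum_{i=0}^{r-1}\xi^r_i(\phi)Y_i$, and read off $\fm_{F^\phi}$ and $I_{F^\phi}$. The only difference is that you pause to justify why $I_F\circ Y^\phi_\der$ is the same as the conjugate $I_F^\phi$ formed on $K^r$, a point the paper's proof treats as notational and passes over silently; your observation about the lower-triangular block structure is the right reason and makes the argument more self-contained.
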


\noindent
Given an $\cL(K)$-definable set $A\subseteq K^{1+r}$, we set
\[
A_\der^\phi\ \coloneqq \ \big\{u \in K^{1+r}:Y^\phi_\der(u)\in A\big\},
\]
so $A_\der^\phi$ is $\cL(K)$-definable, $\dimL(A_\der^\phi) = \dimL(A)$, and $\deltajet^r(y) \in A_\der^\phi \Longleftrightarrow \jet^r(y) \in A$ for $y \in K$. As with definable functions, we drop the subscript and write $A^\phi$ when $\der$ is clear from context.

\subsection{Thin sets}\label{subsec-thin}
A subset $Z\subseteq K^n$ is said to be \textbf{thin} if $\jet^r(Z) \subseteq A$ for some $r$ and some $\cL(K)$-definable set $A \subseteq K^{n(1+r)}$ with $\dimL(A) < n(1+r)$. We note that if $Z \subseteq K$ is $\Ld(K)$-definable, then $Z$ is \emph{not} thin if and only if there is an elementary $\Td$-extension of $M$ of $K$ and an element $a \in Z^M$ such that $\mathscr{J}_\der^\infty(a)$ is $\cL(K)$-independent. The union of any two thin sets is thin, any subset of a thin set is thin, and the singleton $\{a\}$ is thin for $a \in K^n$. The constant field $C$ of $K$ is thin, since
\[
\jet^1(C)\ =\ \big\{(c,0):c \in C\big\}\ \subseteq\ K\times \{0\},
\]
and $K\times\{0\}$ is a 1-dimensional subset of $K^2$. Thus, $K$ is thin if $\der$ is trivial. Here is a converse:

\begin{proposition}
\label{prop-nondefinable}
Suppose that $\der$ is nontrivial. If $U \subseteq K^n$ is open, then $U$ is not thin.
\end{proposition}

\noindent
The case where $U = K$ plays a key part in the proof of Lemma~\ref{lem-nontrivialbound} below, but we prove Proposition~\ref{prop-nondefinable} for arbitrary open $U$ with an eye toward future applications. First, we need a short lemma.

\begin{lemma}
\label{lem-vanishonC}
Let $A \subseteq K^n$ be $\cL(K)$-definable and let $A_1,\ldots,A_n \subseteq K$ be infinite sets. If $A_1 \times \cdots \times A_n \subseteq A$, then $\dimL(A) = n$.
\end{lemma}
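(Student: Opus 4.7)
The plan is to proceed by induction on $n$, using two standard facts from o-minimal dimension theory: first, that an infinite $\cL(K)$-definable subset of $K$ has dimension $1$ (by o-minimality, any such set contains an open interval); second, the fiber-dimension formula, which gives
\[
\dimL(A)\ \geq\ \dimL\big\{y \in \pi_1(A): \dimL(A_y) = d\big\} + d
\]
for any $d$ such that the set on the right is nonempty.

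For the base case $n = 1$, the set $A_1 \subseteq A \subseteq K$ is an infinite $\cL(K)$-definable subset of $K$, so $\dimL(A) = 1$ by o-minimality. For the inductive step, fix $n \geq 2$ and assume the statement for $n-1$. Given $A_1 \times \cdots \times A_n \subseteq A$ with each $A_i$ infinite, I would fix $a \in A_1$ and consider the fiber $A_a = \{y \in K^{n-1}: (a,y) \in A\}$, which is $\cL(K \cup \{a\})$-definable and contains $A_2 \times \cdots \times A_n$. By the induction hypothesis, $\dimL(A_a) = n-1$. Now let
\[
B\ :=\ \big\{a \in \pi_1(A): \dimL(A_a) = n-1\big\},
\]
which is $\cL(K)$-definable because dimension of fibers is definable in o-minimal structures. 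By what we just showed, $A_1 \subseteq B$, so $B$ is an infinite $\cL(K)$-definable subset of $K$, and therefore $\dimL(B) = 1$ by the base case. Applying the fiber-dimension formula with $d = n-1$ gives
\[
\dimL(A)\ \geq\ \dimL(B) + (n-1)\ =\ n,
\]
and since $A \subseteq K^n$, equality holds.

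The only substantive ingredient is the o-minimal fiber-dimension formula together with the fact that the fiber-dimension function $a \mapsto \dimL(A_a)$ is itself definable; both are classical and stated, for instance, in van den Dries's treatment of o-minimality. No genuine obstacle is expected; the induction is formal once those facts are granted.
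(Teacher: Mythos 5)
Your proof is correct and uses essentially the same approach as the paper: an induction on $n$ combined with the definability of the fiber-dimension function and the additivity of dimension over a definable family, both from~\cite[Proposition 1.5]{vdD98}. The only difference is cosmetic: you project onto the first coordinate and apply the induction hypothesis to the $(n-1)$-dimensional fibers, then the $n=1$ case to the base set $B$, while the paper projects onto the first $n$ coordinates of $K^{n+1}$, applies the $n=1$ case to the $1$-dimensional fibers, and then the induction hypothesis to the $n$-dimensional base $A^*$; these are dual readings of the same argument.
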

\begin{proof}
We proceed by induction on $n$, with the $n = 1$ case being clear. Let $n >1$, let $A \subseteq K^n$ be $\cL(K)$-definable, and let $A_1,\ldots,A_n \subseteq K$ be infinite sets with $A_1 \times \cdots \times A_n \subseteq A$. By~\cite[Proposition 4.1.5]{vdD98}, the set
\[
A^*\ \coloneqq \ \big\{y \in \pi_1(A): \dimL(A_y) = n-1\big\}
\]
is $\cL(K)$-definable and $\dimL(A) = \dimL(A^*)+(n-1)$, so we need to show that $\dimL(A^*) = 1$. If $y \in A_1$, then $A_2 \times \cdots \times A_n \subseteq A_y$, so $y \in A^*$ by our induction hypothesis. Thus, $A_1 \subseteq A^*$, so $\dimL(A^*) = 1$ by the $n = 1$ case.
\end{proof}

\begin{proof}[Proof of Proposition~\ref{prop-nondefinable}]
We begin with the case $n = 1$. Let $I \subseteq K$ be an open interval, let $A \subseteq K^{1+r}$ be $\cL(K)$-definable, and suppose $\jet^r(y)\in A$ for each $y \in I$. We will show that $\dimL(A) = 1+r$. Take $a\in K$ and $d \in K^>$ with $I = (a-d,a+d)$. By replacing $A$ with $A_{+a,\times d}$, we arrange that $I = (-1,1)$. Since $\der$ is nontrivial, we have $x \in K$ with $x' \neq 0$. By inverting $x$ if need be, we arrange that $-1<x<1$, so 
\[
c_0+c_1x+\frac{1}{2}c_2x^2+\cdots+\frac{1}{r!}c_rx^r \in I
\]
for any constants $c_0,\ldots,c_r \in [0,1/3]_C$. By replacing $K$ and $A$ with $K^\phi$ and $A^\phi$ for $\phi \coloneqq x'$, we arrange that $x' = 1$. Let $P$ be the matrix
\[
\begin{pmatrix}
1& x& \frac{1}{2}x^2 & \cdots &\frac{1}{r!}x^r\\
0 & 1& x & \cdots & \frac{1}{(r-1)!}x^{r-1}\\
0 & 0 & 1 & \cdots & \frac{1}{(r-2)!}x^{r-2}\\
\vdots & \vdots & \vdots & \ddots & \vdots\\
0 & 0&0& \cdots & 1
\end{pmatrix}.
\]
Then $P$ is invertible and for each $c = (c_0,\ldots,c_r) \in [0,1/3]_C^{1+r}$, we have
\[
Pc\ =\ \begin{pmatrix}
c_0+c_1x+\frac{1}{2}c_2x^2+\cdots+\frac{1}{r!}c_rx^r\\
c_1+c_2x+\cdots+\frac{1}{(r-1)!}c_rx^{r-1}\\
c_2+\cdots+\frac{1}{(r-2)!}c_rx^{r-2}\\
\vdots\\
c_r
\end{pmatrix}\ =\ \jet^r\Big(c_0+c_1x+\frac{1}{2}c_2x^2+\cdots+\frac{1}{r!}c_rx^r\Big)\ \in \ A.
\]
Thus, $[0,1/3]_C^{1+r}\subseteq P\inv A \coloneqq \{P\inv y :y \in A\}$. Lemma~\ref{lem-vanishonC} (with $P\inv A$ in place of $A$ and $[0,1/3]_C$ in place of each $A_i$) gives
\[
\dimL(A)\ =\ \dimL(P\inv A) \ =\ 1+r.
\]

We now handle the general case by induction on $n$. Let $n>1$, let $U \subseteq K^n$ be open, let $A \subseteq K^{n(1+r)}$ be $\cL(K)$-definable, and suppose $\jet^r(U)\subseteq A$. We need to show that $\dimL(A) = n(1+r)$. By~\cite[Proposition 4.1.5]{vdD98}, the set
\[
A^*\ \coloneqq \ \big\{y \in \pi_{1+r}(A): \dimL(A_y) = (n-1)(1+r)\big\}
\]
is $\cL(K)$-definable and $\dimL(A)\geq \dimL(A^*) + (n-1)(1+r)$, so it suffices to show that $\dimL(A^*) = 1+r$. For each $y \in \pi_1(U)$ and each $z \in U_y$, we have $\jet^r(z) \in A_{\mathscr{J}_\der^r(y)}$, so $\dimL(A_{\mathscr{J}_\der^r(y)}) = (n-1)(1+r)$ by our induction hypothesis. Thus, $\jet^r(y) \in A^*$ for each $y \in \pi_1(U)$, so $\dimL(A^*) = 1+r$ by the $n = 1$ case.
\end{proof}

\section{$T$-convex $T$-differential fields}\label{sec-TdO}
\noindent
Let $\LdO \coloneqq \LO\cup \Ld= \cL\cup\{\cO,\der\}$. As defined in the introduction, a \textbf{$T$-convex $T$-differential field} is an $\LdO$-structure $K = (K,\cO,\der)$ such that
\begin{enumerate}
\item $(K,\cO) \models \TO$;
\item $(K,\der) \models \Td$;
\item $\der$ is continuous with respect to the valuation topology.
\end{enumerate}
Let $\TdO$ be the $\LdO$-theory of $T$-convex $T$-differential fields. For the remainder of this article, let $K = (K,\cO,\der) \models \TdO$. Since $\cO$ is a proper $T$-convex subring of $K$, the valuation topology and the order topology coincide by~\cite[Lemma 2.4.1]{ADH17}. If $\der\smallo \subseteq \smallo$, then $K$ is said to have \textbf{small derivation}. Small derivation implies continuity of the derivation. The following fact demonstrates how $\der\cO$ is controlled by $\der\smallo$.

\begin{fact}[\cite{ADH17}, Lemma 4.4.2]
\label{fact-bigtobig}
If $K$ has small derivation, then $\der \cO \subseteq \cO$. Consequently, $\der\smallo \subseteq\phi \smallo \Longrightarrow \der\cO \subseteq \phi \cO$ for each $\phi \in K^\times$.
\end{fact}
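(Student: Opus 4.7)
My plan proceeds in two parts, with the second (``consequently'') reducing to the first via compositional conjugation. Given $\phi\in K^\times$ with $\der\smallo\subseteq\phi\smallo$, consider the compositional conjugate $K^\phi=(K,\cO,\phi^{-1}\der)$ from Subsection~\ref{subsec-compconj}. The derivation $\phi^{-1}\der$ is a continuous $T$-derivation on $K$ (scalar multiplication by $\phi^{-1}$ is a homeomorphism of $K$ in its valuation topology), so $K^\phi\models\TdO$. Moreover $\phi^{-1}\der\smallo\subseteq\phi^{-1}\phi\smallo=\smallo$, i.e.\ $K^\phi$ has small derivation. Applying the first assertion to $K^\phi$ yields $\phi^{-1}\der\cO\subseteq\cO$, equivalently $\der\cO\subseteq\phi\cO$.

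For the first assertion, let $a\in\cO$ with the aim of showing $\der a\in\cO$. If $a\in\smallo$, then $\der a\in\smallo\subseteq\cO$ by hypothesis, so assume $a\asymp 1$. By Theorem 2.12 of~\cite{DL95}, choose a lift $\k\subseteq\cO$ of $\res(K)$ together with $c\in\k$ satisfying $\bar c=\bar a$, so $a-c\in\smallo$. Then
\[
\der a\ =\ \der c\,+\,\der(a-c),
\]
and $\der(a-c)\in\smallo\subseteq\cO$, reducing the problem to showing $\der c\in\cO$ for each $c\in\k$.

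For this residual step I would combine continuity of $\der$ with the Leibniz rule applied to multiplicative identities. Namely, from $c\cdot c^{-1}=1$ for $c\in\k^\times$ one obtains $\der(c^{-1})=-\der c/c^2$, so $\der c\asymp\der(c^{-1})$ when $c\asymp 1$. Pairing this observation with the $T$-convexity of $\cO$ (stability under every $\cL(\emptyset)$-definable continuous function $F\colon K\to K$) and the identity $\der F(c)=F'(c)\,\der c$ characteristic of $T$-derivations, applied to a suitable bounded $\cL(\emptyset)$-definable $\cC^1$-function $F$ with $F'(c)\asymp 1$, should rule out $\der c\succ 1$.

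The principal obstacle is this last step --- establishing $\der\k\subseteq\cO$. In the general valued differential field setting of~\cite[Lemma 4.4.2]{ADH17}, the analogous claim is handled by a direct ultrametric computation using the Leibniz rule, and in the present $T$-convex $T$-differential field setting the same argument should go through, with $\cL(\emptyset)$-definable continuous functions supplying the boundedness that polynomials and rational functions provide in the abstract valued differential field setting.
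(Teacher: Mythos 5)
The paper does not prove this statement --- it is quoted verbatim as a Fact cited from~\cite[4.4.2]{ADH17}, so there is no in-paper proof to compare against. I will therefore assess your argument on its own terms and against the standard argument in~\cite{ADH17}.

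Your reduction of the ``consequently'' clause to the first assertion via compositional conjugation is correct: $K^\phi$ is a $\TdO$-model with small derivation, and $\phi^{-1}\der\cO\subseteq\cO$ is exactly $\der\cO\subseteq\phi\cO$. The problem is in the first assertion, where there is a genuine gap at the decisive step. Reducing to $a\asymp 1$ and then to $\der c\in\cO$ for $c$ in a lift $\k$ of the residue field is harmless but buys nothing: the derivation has no reason to respect $\k$, so elements of $\k$ are not better behaved than arbitrary units. More seriously, the mechanism you propose to close the argument does not close it. The observation $\der(c^{-1})=-\der c/c^2$ gives $\der c\asymp\der(c^{-1})$ when $c\asymp 1$, but this is perfectly compatible with $\der c\succ 1$; and applying $\der F(c)=F'(c)\der c$ to a bounded $\cL(\emptyset)$-definable $F$ with $F'(c)\asymp 1$ only tells you $\der F(c)\asymp\der c$ --- boundedness of $F$ controls $F(c)$, not $\der F(c)$, which is precisely the quantity in question. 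Nothing in the sketch produces a contradiction from $\der c\succ 1$.

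The argument that actually works (and is the one in~\cite[4.4.2]{ADH17}) needs neither $T$-convexity, nor a lift, nor definable functions --- it is a pure valued-differential-field computation. We may assume $\cO\ne K$. Let $a\in\cO$ and pick any $\epsilon\in\smallo^{\ne 0}$. Then $a\epsilon\in\smallo$, so by smallness $\der(a\epsilon)=\der a\cdot\epsilon+a\der\epsilon\in\smallo$, and $a\der\epsilon\in\smallo$ since $a\preceq 1$ and $\der\epsilon\in\smallo$; hence $\der a\cdot\epsilon\in\smallo$ for \emph{every} $\epsilon\in\smallo^{\ne 0}$. If $\der a\succ 1$, set $\gamma:=-v(\der a)>0$; since $K$ is real closed, $\Gamma$ is divisible and we may choose $\epsilon$ with $v\epsilon=\gamma/2$, giving $v(\der a\cdot\epsilon)=-\gamma/2<0$, contradicting $\der a\cdot\epsilon\in\smallo$. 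So $\der a\in\cO$. You gesture at this ``direct ultrametric computation'' in your final paragraph, but you do not carry it out, and the definable-function route you do sketch in its place is a dead end.
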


\noindent
Suppose $K$ has small derivation. By the above fact, $\der$ induces a map $\bar{a} \mapsto \overbar{\der a}\colon \res(K)\to \res(K)$. We denote this map also by $\der$, and we call it the \textbf{induced derivation on $\res(K)$}. This induced derivation is even a $T$-derivation. To see this, let $F$ be an $n$-ary $\cL(\emptyset)$-definable $\cC^1$-function with open domain and let $\varphi$ be the $\cL(\emptyset)$-formula defining the domain of $F$. Set
\[
U\ \coloneqq \ \varphi(K)\ \subseteq\ K^n,\qquad V\ \coloneqq \ \varphi(\res K)\ \subseteq\ \res(K)^n,
\]
so $V \subseteq \overbar{U}$. Let $F$ denote both its interpretation as a function $U \to K$ and its interpretation as a function $V \to \res(K)$ and let $u \in U$ with $\bar{u}\in V$. By~\cite[Lemma 1.13]{DL95}, we have $\overbar{F(u)} = F(\bar{u})$, so 
\[
\der F(\bar{u})\ =\ \der\overbar{F(u)}\ =\ \overbar{\der F(u)}\ =\ \overbar{\nabla F(u)\cdot\der u}\ =\ \nabla F(\bar{u})\cdot \der \bar{u}.
\]
Accordingly, we view $\res(K)$ as a model of $\Td$. Note that the induced derivation on $\res(K)$ is trivial if and only if $\der\cO \subseteq \smallo$.

\medskip\noindent
Let $\Delta$ be a nontrivial convex $\Lambda$-subspace of $\Gamma$. Recall the $\TO$-models $K_{\Delta}$ and $\res(K_{\Delta})$ associated to $K$ and $\Delta$ from Subsection~\ref{subsec-TOcoarsening}. We may view $K_{\Delta}$ as a $\TdO$-model with the same derivation as $K$; the valuation topology induced by $\dot{\cO}$ is either discrete or the same as the topology induced by $\cO$, so $\der$ is still continuous in $K_{\Delta}$. Suppose $K$ has small derivation. Then $K_{\Delta}$ does as well by~\cite[Corollary 4.4.4]{ADH17}, so we may consider the induced derivation on $\res(K_{\Delta})$. This induced derivation is a $T$-derivation by the remarks above, and it is also small, hence continuous. Thus, we may view $\res(K_{\Delta})$ as a $\TdO$-model as well. 

\subsection{Examples}
As mentioned in the introduction, the following example fits comfortably within the framework of~\cite{ADH18B}:

\begin{example}[Real closed valued differential fields]
\label{ex-rcvf}
Let $R$ be a real closed field and let $\cO$ be a proper convex valuation ring of $R$. Then $\cO$ is $\RCF$-convex by~\cite[Proposition 4.2]{DL95}. Let $\der$ be a derivation on $R$ which is continuous with respect to the valuation topology. By~\cite[Proposition 2.8]{FK19}, $\der$ is an $\RCF$-derivation, so $(R,\cO,\der) \models \RCF^{\cO,\der}$. Conversely, every model of $\RCF^{\cO,\der}$ is a real closed nontrivially valued field with a continuous derivation.
\end{example}

\noindent
Hardy fields, which are deeply connected to o-minimality, provide a rich source of new examples:

\begin{example}[$\cR$-Hardy fields]
\label{ex-hardy}
Let $\cR$ be an arbitrary o-minimal expansion of the real field in an appropriate language $\cL_{\cR}$. By extending $\cL_{\cR}$, we may assume that $\cL_{\cR}$ contains a constant symbol for each $r \in \R$ and that $T_{\cR} \coloneqq \Th(\cR)$ has quantifier elimination and a universal axiomatization. Following~\cite{DMM94}, we define an \textbf{$\cR$-Hardy field} to be a Hardy field $\cH$ which is closed under all function symbols in $\cL_{\cR}$. That is, $\cH$ is an ordered field of germs at $+\infty$ of unary functions $f\colon \R \to \R$ such that:
\begin{enumerate}
\item If the germ of $f$ belongs to $\cH$, then so does the germ of $f'$;
\item If $F$ is an $n$-ary function symbol in $\cL_{\cR}$ and the germs of $f_1,\ldots,f_n$ belong to $\cH$, then the germ of the composite function $x\mapsto F\big(f_1(x),\ldots,f_n(x)\big)$ belongs to $\cH$.
\end{enumerate}
Let $\cH$ be an $\cR$-Hardy field. Then $\cH$ admits a natural expansion to a model of $T_{\cR}$~\cite[Lemma 5.8]{DMM94}. Let $\der$ denote the natural derivation on $\cH$ which sends the germ of $f$ to the germ of $f'$, and let $\cO$ be the convex hull of $\R$ in $\cH$, where each $r \in \R$ is identified with the germ of the constant function $x\mapsto r$. Then $\cO$ is $T_{\cR}$-convex by Fact~\ref{fact-hull}, and the chain rule from elementary calculus tells us that $\der$ is a $T_{\cR}$-derivation. As the derivation on $\cH$ is small, it is continuous, so $\cH\models \TdO_{\cR}$ (assuming $\cO \neq \cH$).
\end{example}

\noindent
For a more algebraic example, we consider field of Puiseux series over $\R$:

\begin{example}[Puiseux series]\label{ex-PuiseuxTdO}
Let $\bigcup_{n>0} \R((t^{1/n}))$ be the field of Puiseux series over $\R$, ordered so that $0 < t<\R^>$. This field admits a canonical expansion to a model of $T_{\an}$, where each restricted analytic function is defined via Taylor expansion. The usual valuation ring on this field, the convex hull of $\R$, is $T_{\an}$-convex by Fact~\ref{fact-hull}. We define a derivation $\der$ on this field by setting
\[
\der\sum_{k = k_0}^\infty r_k t^{\frac{k}{n}}\ \coloneqq \ \sum_{k = k_0}^\infty \frac{kr_k}{n} t^{\frac{k}{n}-1}.
\]
Then $\der$ is easily seen to be compatible with all restricted analytic functions, so it is even a $T_{\an}$-derivation; see~\cite[Lemma 2.9]{FK19} for why this is sufficient. Since $\der$ is small, it is continuous, so the field of Puiseux series is a model of $\TdO_{\an}$.
\end{example}

\noindent
The Puiseux series sit inside of the much richer field of \emph{logarithmic-exponential transseries}, which serves as another motivating example:

\begin{example}[Transseries]\label{ex-transseriesTdO}
Let $\T$ be the ordered valued differential field of logarithmic-exponential transseries with valuation ring $\cO$ and derivation $\der$; see Appendix A of~\cite{ADH17} for a detailed definition. By~\cite[Corollary 2.8]{DMM97}, $\T$ admits a canonical expansion to a model of $T_{\an}$, which we denote by $\T_{\an}$. Since $\cO$ is the convex hull of the $T_{\an}$-model $\R_{\an}$, it is $T_{\an}$-convex by Fact~\ref{fact-hull}. By~\cite[Corollary 3.3]{DMM01}, the derivation $\der$ is compatible with all restricted analytic functions, so $\der$ is a $T_{\an}$-derivation by~\cite[Lemma 2.9]{FK19}. Since $\der$ is small, it is continuous, so $\T_{\an}\models \TdO_{\an}$.
\end{example}

\noindent
Examples~\ref{ex-hardy},~\ref{ex-PuiseuxTdO}, and~\ref{ex-transseriesTdO} above are all \emph{$H$-fields} (with constant field $\R$), as introduced in~\cite{AD02}. Models of $\TdO$ which are also $H$-fields are studied in more depth in~\cite{Ka22}. For an example which is not an $H$-field, consider the following:

\begin{example}[Ordered Hahn differential fields]
\label{ex-hahnTdO}
Let $\k = (\k,\der_{\k}) \models \Td_{\an}$ and let $\mfM$ be a divisible ordered abelian group, written multiplicatively. Consider the ordered Hahn field $\k[[\mfM]]$. We identify $\k$ with a subfield of $\k[[\mfM]]$ via the embedding $a \mapsto a\cdot 1_{\mfM}$. As with the field of Puiseux series, we expand $\k[[\mfM]]$ to a model of $T_{\an}$ using Taylor expansion; this was done when $\k = \R$ in~\cite{DMM94}, and the reader may consult~\cite[Proposition 2.13]{Ka21} for full details. We extend $\der_{\k}$ to a derivation $\der$ on $\k[[\mfM]]_{\an}$ by setting 
\[
\der\big(\sum_\fm f_\fm\fm\big)\ \coloneqq \ \sum_\fm\der_{\k}(f_\fm)\fm
\]
for $\sum_\fm f_\fm\fm \in \k[[\mfM]]_{\an}$. Using that $\der$ is strongly additive, one can verify that $\der$ is compatible with each restricted analytic function; see~\cite[Proposition 3.14]{Ka21} for full details. It follows by~\cite[Lemma 2.9]{FK19} that $\der$ is a $T_{\an}$-derivation. The constant field of $\k[[\mfM]]_{\an}$ is the subfield $C_{\k}[[\mfM]]$, where $C_{\k}$ is the constant field of $\k$. Let $\cO$ be the convex hull of $\k$ in $\k[[\mfM]]_{\an}$, so $\cO$ is $T_{\an}$-convex by Fact~\ref{fact-hull}. With respect to $\cO$, the derivation $\der$ is \textbf{monotone}, that is, $\der f\preceq f$ for all $f\in \k[[\mfM]]_{\an}$. In particular, $\der$ is small, so $\k[[\mfM]]_{\an}\models \TdO_{\an}$ (assuming $\mfM$ is nontrivial). This model is spherically complete and has \emph{many constants}: for each $f \in \k[[\mfM]]_{\an}$, there is a constant $c$ with $f\asymp c$.
\end{example}

\noindent
In~\cite{Sc00}, Scanlon proved an Ax-Kochen-Er\v{s}ov (AKE) result for Hahn differential fields $\k[[\mfM]]$ where $\k$ is an (unordered) differential field of characteristic zero and where the derivation on $\k$ is extended to $\k[[\mfM]]$ as in the example above. As a consequence, he showed that if $\mfM$ is divisible and $\k$ is differentially closed, then the valued differential field $\k[[\mfM]]$ is model complete. Scanlon's AKE result can also be used to show that if $\mfM$ is divisible and $\k$ is a closed ordered differential field (as introduced by Singer~\cite{Si78}), then the \emph{ordered} valued differential field $\k[[\mfM]]$ is model complete\footnote{In~\cite{Sc00}, it is assumed that $\k$ is closed under $n$-th roots for all $n>0$. This condition is not satisfied for even $n$ when $\k$ is real closed. However, this assumption can be removed; see~\cite[Section 6]{Sc03} and also~\cite[Theorem 8.0.3]{ADH17}. Thanks to Gabriel Ng for pointing this out.}. In~\cite{FK19}, it is shown that $\Td_{\an}$ has a model completion. This raises the question: if $\k$ is a model of this model completion and $\k[[\mfM]]_{\an}\models \TdO_{\an}$ is as above, then is $\k[[\mfM]]_{\an}$ model complete? As with the uniqueness questions discussed at the end of this article, answering this question likely requires some analog of \emph{differential henselianity}. For a generalization of Scanlon's AKE result, see~\cite{Ha18}.

\subsection{Strict extensions}
Let $M$ be a $\TdO$-extension of $K$. Recall from the introduction that $M$ is a \textbf{strict} extension of $K$ if
\[
\der \smallo \subseteq \phi\smallo\ \Longrightarrow\ \der_M\smallo_M \subseteq \phi\smallo_M\qquad\der \cO \subseteq \phi\smallo\ \Longrightarrow\ \der_M\cO_M \subseteq \phi\smallo_M
\]
for each $\phi \in K^\times$.

\medskip\noindent
We are interested in when $K$ has a spherically complete immediate strict $\TdO$-extension. If $\der$ is trivial and $T$ is power bounded, then $K$ has a spherically complete immediate $\TO$-extension $M$ by Corollary~\ref{cor-immediateext}. Viewed as a $\TdO$-model with trivial derivation, $M$ is a spherically complete immediate strict $\TdO$-extension of $K$. If $T$ is not power bounded, then $K$ has no spherically complete $\TO$-extension at all by Remark~\ref{rem-noimmediateext}. Accordingly, we make the following assumption:

\begin{assumption}\label{assp-standing}
For the remainder of this article, $T$ is power bounded with field of exponents $\Lambda$ and the derivation on $K$ is nontrivial.
\end{assumption}

\noindent
Below we list some basic but important facts about strict extensions.
\begin{enumerate}
\item $M$ is a strict $\TdO$-extension of $K$ if and only if $M^\phi$ is a strict $\TdO$-extension of $K^\phi$ for $\phi \in K^\times$.
\item If $M$ is a $\TdO$-extension of $K$ and $M$ is contained in a strict $\TdO$-extension of $K$, then $M$ is itself a strict extension of $K$.
\item If $M$ is a strict $\TdO$-extension of $L$ and $L$ is a strict $\TdO$-extension of $K$, then $M$ is a strict extension of $K$.
\item If $M$ is an elementary $\TdO$-extension of $K$, then $M$ is a strict extension of $K$.
\end{enumerate}

\medskip\noindent
In general, there may be $\TdO$-extensions $M$ of $K$ which are not strict but which nevertheless satisfy the condition $\der \smallo \subseteq \phi\smallo \Longrightarrow \der_M\smallo_M$ for all $\phi \in K^\times$. However, if $M$ is an \emph{immediate} extension of $K$, then this weaker condition implies strictness:

\begin{fact}[\cite{ADH18B}, Lemma 1.5]
\label{fact-onlychecksmall}
Suppose that $M$ is an immediate $\TO$-extension of $K$ and let $\der_M$ be a $T$-derivation on $M$ which extends $\der$. If
\[
\der \smallo \subseteq \phi\smallo\ \Longrightarrow \ \der_M \smallo_M \subseteq \phi \smallo_M
\]
for each $\phi\in K^\times$, then $M$ is a strict $\TdO$-extension of $K$.
\end{fact}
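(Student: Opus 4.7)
The plan is to verify the two implications in the definition of strict extension separately, having first checked that $M$ really is a $\TdO$-extension (that is, that $\der_M$ is continuous with respect to the valuation topology on $M$). Both steps are short; the second implication reduces to the first via the immediateness hypothesis.

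For continuity, I invoke the characterization from \cite[Lemma 4.4.7]{ADH17} recalled near the start of the introduction: since $K\models\TdO$, there is some $\phi\in K^\times$ with $\der\smallo\subseteq\phi\smallo$. The hypothesis then delivers $\der_M\smallo_M\subseteq\phi\smallo_M$, which by the same criterion applied in $M$ makes $\der_M$ continuous. Hence $M\models\TdO$, and the first of the two implications defining strictness is identical to the given hypothesis.

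For the second implication, assume $\der\cO\subseteq\phi\smallo$ for some $\phi\in K^\times$; I want $\der_M\cO_M\subseteq\phi\smallo_M$. Since $\smallo\subseteq\cO$, this hypothesis entails $\der\smallo\subseteq\phi\smallo$, so by the given hypothesis $\der_M\smallo_M\subseteq\phi\smallo_M$. Now I exploit immediateness to write an arbitrary $a\in\cO_M$ as $a=b+\epsilon$ with $b\in\cO$ and $\epsilon\in\smallo_M$: if $a\in\smallo_M$, take $b=0$ and $\epsilon=a$; otherwise $v(a)=0$ and $\bar a\in\res(M)^\times=\res(K)^\times$, so there is $b\in\cO^\times$ with $\bar b=\bar a$, and we set $\epsilon:=a-b\in\smallo_M$. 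Since $\der_M$ extends $\der$,
\[
\der_M a \;=\; \der b + \der_M\epsilon \;\in\; \phi\smallo + \phi\smallo_M \;\subseteq\; \phi\smallo_M,
\]
as required.

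There is no real obstacle here: the substantive ingredients are the continuity criterion \cite[Lemma 4.4.7]{ADH17} and the immediateness of $M$ over $K$, which lets one split each element of $\cO_M$ as a sum of an element of $\cO$ and an element of $\smallo_M$. The strictness condition on $\cO$ is therefore forced by the one on $\smallo$ as soon as $\der_M$ extends $\der$ and $M$ is immediate over $K$.
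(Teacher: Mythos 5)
The paper states this as a Fact, citing \cite{ADH18B} without supplying a proof, so there is no in-paper argument to compare against; your argument is correct. The decomposition $\cO_M = \cO + \smallo_M$ --- available because $M$ is immediate over $K$, so each unit of $\cO_M$ differs from a unit of $\cO$ by an element of $\smallo_M$ --- is exactly what reduces the second strictness implication to the first, and your opening observation that continuity of $\der_M$ must be verified (so that $M$ is genuinely a $\TdO$-model, not merely a $\TO$-model equipped with a $T$-derivation) correctly addresses a point left implicit in the statement.
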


\noindent
Recall from the introduction the sets
\[
\Gamma(\der)\ \coloneqq \ \{v\phi:\der\smallo\subseteq \phi\smallo\},\qquad S(\der)\ \coloneqq \ \big\{\gamma \in \Gamma:\Gamma(\der) + \gamma = \Gamma(\der)\big\}.
\]
We sometimes write $\Gamma_K(\der)$ and $S_K(\der)$ if $K$ is not clear from context. Note that $\Gamma(\der)< v(\der\smallo)$ is a downward closed subset of $\Gamma$ and that $S(\der)$, the \emph{stabilizer} of $\Gamma(\der)$, is a convex subgroup of $\Gamma$. For $\phi \in K^\times$, we have
\[
\Gamma(\phi\inv\der) \ =\ \Gamma(\der)-v\phi,\qquad S(\phi\inv\der) \ =\ S(\der),
\]
so $S(\der)$ is invariant under compositional conjugation. If $M$ is a strict $\TdO$-extension of $K$ with $\Gamma_M = \Gamma$, then $\Gamma_M(\der) = \Gamma(\der)$ and $S_M(\der) = S(\der)$. Using that $K$ is real closed, we can show that $\Gamma(\der)$ is closed in $\Gamma$ (with respect to the order topology).

\begin{lemma}
\label{lem-supisactive}
Let $\alpha \in \Gamma$. If $\alpha - \epsilon \in \Gamma(\der)$ for each $\epsilon\in \Gamma^>$, then $\alpha \in \Gamma(\der)$. 
\end{lemma}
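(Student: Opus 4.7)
The plan is to verify the containment $\der\smallo \subseteq \phi\smallo$ for any $\phi \in K^\times$ with $v\phi = \alpha$ by a contradiction argument using real closedness to take square roots.

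First I would establish the weaker bound $v(\der x) \geq \alpha$ for every $x \in \smallo$. For each $\epsilon \in \Gamma^>$, the assumption gives some $\phi_\epsilon \in K^\times$ with $v\phi_\epsilon = \alpha - \epsilon$ and $\der\smallo \subseteq \phi_\epsilon\smallo$, so $v(\der x) > \alpha - \epsilon$. Since this holds for every positive $\epsilon$, if one had $v(\der x) < \alpha$ then setting $\epsilon := \alpha - v(\der x) \in \Gamma^>$ would give $v(\der x) > v(\der x)$, a contradiction; hence $v(\der x) \geq \alpha$.

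Next I would fix any $\phi \in K^\times$ with $v\phi = \alpha$ and show $\der\smallo \subseteq \phi\smallo$, i.e.\ that the inequality in the previous step is always strict. Suppose toward a contradiction that some $x \in \smallo$ satisfies $v(\der x) = \alpha$ exactly. Replacing $x$ by $-x$ if necessary, arrange $x > 0$. Here the assumption that $K$ is real closed enters: the value group $\Gamma$ is divisible (any positive element of $K$ has an $n$th root), so $v(x)/2 \in \Gamma$, and $z := \sqrt{x} \in K^>$ lies in $\smallo$ with $v(z) = v(x)/2 > 0$.

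The Leibniz rule then gives $\der x = 2z\,\der z$, so
\[
v(\der z)\ =\ v(\der x) - v(z)\ =\ \alpha - v(x)/2\ <\ \alpha.
\]
But $z \in \smallo$, so the first step forces $v(\der z) \geq \alpha$, a contradiction. Therefore no such $x$ exists, $\der\smallo \subseteq \phi\smallo$, and $\alpha = v\phi \in \Gamma(\der)$. The only subtle step is the passage from the weak bound $v(\der x) \geq \alpha$ to the strict bound, which is precisely where real closedness (via square roots) is used — without it, a discrete value jump at $\alpha$ could not be ruled out.
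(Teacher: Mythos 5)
Your proof is correct and uses the same key idea as the paper: taking a square root of a small element $a$ with $v(a')\leq\alpha$ and applying the chain/Leibniz rule to get $v\bigl((a^{1/2})'\bigr)=v(a')-\tfrac12 va<\alpha$, contradicting the hypothesis at $\epsilon=\tfrac12 va$. The paper packages this as a one-step contrapositive while you split off a preliminary bound $v(\der x)\geq\alpha$, but the mathematical content is the same.
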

\begin{proof}
Suppose $\alpha \not\in \Gamma(\der)$. Take $a \in \smallo$ with $a>0$ and $v(a') \leq \alpha$. Then
\[
v\big((a^{1/2})'\big)\ =\ v(a')-\frac{1}{2}va\ \leq\ \alpha-\frac{1}{2}va,
\]
so $\alpha - \frac{1}{2}va \not\in \Gamma(\der)$.
\end{proof}

\section{Behavior of definable functions} \label{sec-behaviour}
\noindent
In this section, let $F\colon K^{1+r} \to K$ be an $\cL(K)$-definable function in implicit form. We set $vF \coloneqq v(\fm_F) \in \Gamma$, and we call $vF$ the \textbf{valuation of $F$}. Lemmas~\ref{lem-addmultconj} and~\ref{lem-compconj} give
\[
vF_{+a,\times d} \ =\ vF+vd,\qquad vF^\phi\ =\ vF+rv\phi
\]
for $a \in K$ and $d,\phi \in K^\times$. The valuation of $F$ acts as a sort of crude replacement for the Gaussian valuation associated to differential polynomials, used frequently throughout~\cite{ADH17}.

\subsection{Bounding $vF(\jet^ru)$ from above}
In this subsection, we use $vF$ to find points $u \in K$ where $F(\jet^ru)$ is ``not too small.''

\begin{lemma}
\label{lem-nontrivialbound}
Suppose that $K$ has small derivation and that the induced derivation on $\res(K)$ is nontrivial. Then $vF(\jet^ru)\leq vF$ for some $u \in \cO^\times$.
\end{lemma}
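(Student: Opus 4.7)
The plan is to exploit the nontriviality of the induced derivation on $\res(K)$ via Proposition~\ref{prop-nondefinable}, which guarantees many residues whose $r$-jets avoid any low-dimensional $\cL(\res K)$-definable set. We will lift such a residue to a unit $u \in \cO^\times$ and check that $u^{(r)}$ and $I_F(\jet^{r-1}u)$ are sufficiently far apart.

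First I would translate the conclusion. Since $F(\jet^r u) = \fm_F\big(u^{(r)} - I_F(\jet^{r-1}u)\big)$, the desired inequality $vF(\jet^r u) \leq vF$ is equivalent to $v\big(u^{(r)} - I_F(\jet^{r-1}u)\big) \leq 0$. Because $K$ has small derivation, $u \in \cO^\times$ yields $\jet^r u \in \cO^{1+r}$ and $\overline{\jet^r u} = \jet^r \bar{u}$, where the right-hand jet is computed with the induced derivation on $\res(K)$. Next, I would consider the graph $A := \Graph(I_F) \subseteq K^{1+r}$, which is $\cL(K)$-definable with $\dim_\cL A = r$. By Fact~\ref{fact-resdef}, $\overbar{A}$ is an $\cL(\res K)$-definable subset of $\res(K)^{1+r}$ with $\dim_\cL \overbar{A} \leq r$. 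Since $\res(K) \models \Td$ has nontrivial derivation, I would apply Proposition~\ref{prop-nondefinable} to the open interval $(0,1) \subseteq \res(K)$ to produce some nonzero $\bar{u} \in (0,1)$ with $\jet^r \bar{u} \notin \overbar{A}$. Lifting, pick $u \in \cO$ with residue $\bar{u}$; then $u \in \cO^\times$.

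To conclude, I would use that $\overline{\jet^r u} = \jet^r \bar{u} \notin \overbar{A}$ to rule out any $z \in A \cap \cO^{1+r}$ with $\bar{z} = \overline{\jet^r u}$. Applied to the candidate $z = \big(\jet^{r-1}u,\, I_F(\jet^{r-1}u)\big)$, which lies on $A$ by definition, this forces one of two alternatives: either $I_F(\jet^{r-1}u) \notin \cO$, in which case $u^{(r)} - I_F(\jet^{r-1}u) \notin \cO$ and the valuation is strictly negative; or $I_F(\jet^{r-1}u) \in \cO$ but $\overline{I_F(\jet^{r-1}u)} \neq \overline{u^{(r)}}$, in which case $u^{(r)} - I_F(\jet^{r-1}u) \in \cO^\times$. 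Either way, $v\big(u^{(r)} - I_F(\jet^{r-1}u)\big) \leq 0$, as required. I do not anticipate a serious obstacle here: the only subtle point is ensuring that Proposition~\ref{prop-nondefinable} transfers cleanly to $\res(K)$, but this is immediate since $\res(K)$ is a $\Td$-model with nontrivial derivation by assumption.
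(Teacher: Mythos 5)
Your proof is correct and follows essentially the same route as the paper's: reduce to Fact~\ref{fact-resdef} on the graph of $I_F$, invoke Proposition~\ref{prop-nondefinable} on $\res(K)$ to find a residue whose $r$-jet avoids $\overbar{\Graph(I_F)}$, lift to a unit $u$, and split into the cases $I_F(\jet^{r-1}u) \succ 1$ versus $I_F(\jet^{r-1}u) \preceq 1$. The only cosmetic difference is how you avoid $\bar u = 0$ (restricting to the interval $(0,1)$ rather than adjoining $\{0\}$ to the thin set as the paper does), which is immaterial.
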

\begin{proof}
Fact~\ref{fact-resdef} gives that $\overbar{\Graph(I_F)}\subseteq \res(K)^{1+r}$ is $\cL(\res K)$-definable and $\dimL\!\big(\overbar{\Graph(I_F)}\big) \leq r$. Thus, the set
\[
\big\{y \in \res(K):\jet^r(y) \in \overbar{\Graph(I_F)}\big\}\cup \{0\}
\]
is a thin subset of $\res(K)$. Proposition~\ref{prop-nondefinable} applied to $\res(K)$ gives $u \in \cO^\times$ with $\jet^r(\bar{u}) = \overbar{\jet^r(u)} \not \in \overbar{\Graph(I_F)}$. Then either $I_F(\jet^{r-1}u) \succ 1$ or $I_F(\jet^{r-1}u) \preceq 1$ and 
\[
\overbar{u^{(r)}}\ \neq\ \overbar{I_F(\jet^{r-1}u)}.
\]
In either case, $u^{(r)}- I_F(\jet^{r-1}u)\succeq 1$, so
\[
F(\jet^ru) \ =\ \fm_F\big(u^{(r)}- I_F(\jet^{r-1}u)\big) \ \succeq \ \fm_F.\qedhere
\]
\end{proof}

\begin{lemma}
\label{lem-approxbound}
Suppose $S(\der) = \{0\}$ and let $\beta \in \Gamma^>$. Then there is $\gamma \in \Gamma(\der)$ and $u \in K$ with $|vu|<\beta$ such that
\[
v F(\jet^ru) \ \leq\ vF+ r\gamma+\beta.
\]
\end{lemma}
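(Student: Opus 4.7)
The plan is to split into cases $r = 0$ and $r \geq 1$. The case $r = 0$ is direct: writing $F = \fm_F(Y_0 - d)$ for some $d \in K$, we have $vF(u) = vF + v(u-d)$, and choosing $u \in C^\times$ with $vu = 0$ and, when $vd = 0$, $\bar u \neq \bar d$ (possible since the residue of $C$ in $\res(K)$ is infinite, using characteristic $0$) gives $|vu| = 0 < \beta$ and $v(u-d) \leq \max(0, vd) \leq \beta$. Any $\gamma \in \Gamma(\der)$---nonempty by continuity of $\der$---completes this case.

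For $r \geq 1$, the strategy is to compositionally conjugate by an appropriate $\phi$ so that $\tilde\der := \phi\inv\der$ normalizes some $x \in \smallo$ to $\tilde\der x = 1$, then construct $u$ as a polynomial in $x$ with constant coefficients in the spirit of the proof of Proposition~\ref{prop-nondefinable}. Since $S(\der) = \{0\}$ and $\beta/r \in \Gamma^>$ (noting that $\Gamma$ is divisible since $K$ is real closed), pick $\alpha \in \Gamma(\der)$ with $\alpha + \beta/r \notin \Gamma(\der)$; correspondingly, there is $x \in \smallo$ with $v(\der x) \leq \alpha + \beta/r$, and since $\alpha \in \Gamma(\der)$ forces $v(\der x) > \alpha$, we have $v(\der x) \in (\alpha, \alpha + \beta/r]$. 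Set $\phi := \der x$, so $\tilde\der x = 1$ in $K^\phi$. By Lemma~\ref{lem-compconj}, $G := F^\phi$ is in implicit form with $vG = vF + rv\phi$. Taking $\gamma := \alpha \in \Gamma(\der)$, the desired bound rewrites as
\[
v\bigl(\tilde\der^r u - I_G(\tilde\jet^{r-1} u)\bigr) \leq \beta - r(v\phi - \alpha),
\]
where the right-hand side lies in $[0, \beta)$; it therefore suffices to find $u$ with $|vu| < \beta$ and $\tilde\der^r u - I_G(\tilde\jet^{r-1} u) \notin \smallo$.

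To produce such $u$, consider $u = c_0 + c_1 x + \cdots + \frac{c_r}{r!}x^r$ for $(c_0, \ldots, c_r) \in C^{1+r}$. Then $\tilde\der^r u = c_r$, and for $0 \leq k \leq r - 1$ the derivative $\tilde\der^k u$ equals $c_k$ plus a sum of terms lying in $\smallo$ (since $vx > 0$). Let $\Psi \colon K^{1+r} \to K$ be the $\cL(K)$-definable map $\Psi(c) := c_r - I_G(\tilde\jet^{r-1} u)$; we want to find $c \in (\cO \cap C)^{1+r}$ with $\bar c_0 \neq 0$ and $\Psi(c) \notin \smallo$. Let $Z \subseteq \res(K)^{1+r}$ be the ``bad'' set consisting of residues $\bar c$ for which one of the following fails: $\bar c_0 \neq 0$; $I_G$ is continuous at $(c_0, \ldots, c_{r-1})$; and $\bar c_r \neq \overline{I_G(c_0, \ldots, c_{r-1})}$ (well-defined whenever $I_G$ maps into $\cO$ there). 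By Fact~\ref{fact-resdef} combined with o-minimality on $\res(K)$, the set $Z$ is $\cL(\res K)$-definable of $\cL$-dimension at most $r$ (the cancellation condition cuts out a graph over the first $r$ coordinates, and the image of the discontinuity locus of $I_G$ has dimension less than $r$). Since $\bar C$ is infinite, Lemma~\ref{lem-vanishonC} applied in $\res(K)$ yields $\bar C^{1+r} \not\subseteq Z$, so we may lift a tuple in $\bar C^{1+r} \setminus Z$ to $(c_0, \ldots, c_r) \in (C \cap \cO)^{1+r}$. The resulting $u$ has $|vu| = 0 < \beta$, and at the continuity point $(c_0, \ldots, c_{r-1})$ we have $\overline{I_G(\tilde\jet^{r-1} u)} = \overline{I_G(c_0, \ldots, c_{r-1})}$, so the condition $\bar c_r \neq \overline{I_G(c_0, \ldots, c_{r-1})}$ forces $v(c_r - I_G(\tilde\jet^{r-1} u)) = 0$, as required.

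The main obstacle is the dimension bound on $Z$: verifying that the cancellation graph has dimension at most $r$ and that the discontinuity-locus contribution also has dimension less than $1 + r$ (both requiring careful use of Fact~\ref{fact-resdef} and cell decomposition in $\res(K)$). Combined with the lower bound supplied by Lemma~\ref{lem-vanishonC}, this is what forces the existence of a suitable tuple of constants.
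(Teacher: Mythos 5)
Your proposal is essentially correct and takes a genuinely different, more elementary route than the paper. The paper's proof constructs an elementary $\TdO$-extension $M$, coarsens $M^\phi$ by the infinitesimal convex subspace $\Delta$ to land in a situation where the residue derivation is nontrivial, applies Lemma~\ref{lem-nontrivialbound} to $M^\phi_\Delta$, and then pulls the witness $u$ back to $K$ by elementarity. You instead work entirely inside $K$: by choosing $\phi := \der x$ you arrange $\derdelta x = 1$ \emph{exactly}, at the cost of only knowing $v\phi \in (\alpha, \alpha+\beta/r]$ rather than $v\phi=\alpha$, and this loss is exactly absorbed by the $\beta$ slack in the conclusion. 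You then inline the polynomial construction underlying Proposition~\ref{prop-nondefinable} and the thinness argument underlying Lemma~\ref{lem-nontrivialbound}, rather than invoking them in a separate extension. This is a valid and arguably cleaner alternative since it avoids saturation and coarsening.

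Two expository defects are worth flagging, though neither is fatal. First, your ``bad set'' $Z$ is not well-defined as written: conditions such as ``$I_G$ is continuous at $(c_0,\ldots,c_{r-1})$'' and ``$\bar c_r \neq \overline{I_G(c_0,\ldots,c_{r-1})}$'' depend on the chosen lift, not only on the residue tuple $\bar c$. The continuity locus is in fact a red herring. The clean version, exactly as in the proof of Lemma~\ref{lem-nontrivialbound}, is
\[
Z\ :=\ \big\{\bar c \in \res(K)^{1+r} : \bar c_0 = 0\ \text{ or }\ (\bar c_0,\ldots,\bar c_r)\in \overbar{\Graph(I_G)}\big\},
\]
which is $\cL(\res K)$-definable of dimension $\leq r$ by Fact~\ref{fact-resdef}, and this suffices: if $(\bar c_0,\ldots,\bar c_r)\notin \overbar{\Graph(I_G)}$ then, since $\deltajet^{r-1}(u)\in\cO^r$ has residue $(\bar c_0,\ldots,\bar c_{r-1})$ for your polynomial $u$, either $I_G(\deltajet^{r-1}u)\succ 1$ or $\overline{I_G(\deltajet^{r-1}u)}\neq \bar c_r$; in both cases $\derdelta^ru - I_G(\deltajet^{r-1}u)=c_r-I_G(\deltajet^{r-1}u)\succeq 1$, independently of the lift. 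Second, in the $r=0$ case the intermediate inequality ``$v(u-d)\leq\max(0,vd)\leq\beta$'' is off: when $vd>\beta$ the second inequality fails. But the argument survives, since with $vu=0$ one gets $v(u-d)=\min(0,vd)\leq 0<\beta$ directly (after arranging $\bar u\neq\bar d$ if $vd=0$).
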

\begin{proof}
We claim that for any $\epsilon \in \Gamma^>$, we can find $\gamma \in \Gamma(\der)$ and $a \in \smallo$ such that $v(a')-\gamma\leq \epsilon$. Let $\epsilon \in \Gamma^>$ be given and, using that $\epsilon \not\in S(\der)= \{0\}$, take $\gamma \in \Gamma(\der)$ with $\gamma + \epsilon \not\in \Gamma(\der)$. Then there is $a \in \smallo$ with $v(a')\leq \gamma + \epsilon$, as desired. This claim yields an elementary $\TdO$-extension $M$ of $K$ with $\gamma \in \Gamma_M(\der)$ and $a \in \smallo_M$ such that 
\[
v(a')-\gamma\ <\ \Gamma^>.
\]
Let $\Delta$ be the convex $\Lambda$-subspace of $\Gamma_M$ consisting of all $\epsilon \in \Gamma_M$ with $|\epsilon|< \Gamma^>$ and let $\phi \in M^\times$ with $v\phi = \gamma$. Then $M^\phi$ has small derivation, so $M^\phi_\Delta$ does as well by~\cite[Corollary 4.4.4]{ADH17}. Since $v(\phi\inv a')< \Gamma^>$ and $v(a')> v\phi$, we have $\dot{v}(\phi\inv a')= 0$, so the derivation on $\res(M^\phi_\Delta)$ is nontrivial. Applying Lemma~\ref{lem-nontrivialbound} to $M_\Delta^\phi$ and $F^\phi$, we get $u \in M$ with $\dot{v}u= 0$ and
\[
\dot{v}F^\phi(\ojet{\phi\inv\der_M}^ru)\ \leq\ \dot{v}F^\phi. 
\]
Then $|vu|<\Gamma^>$ and
\[
vF^\phi(\ojet{\phi\inv\der_M}^ru)- vF^\phi\ =\ vF(\ojet{\der_M}^ru)-(vF+rv\phi)\ =\ vF(\ojet{\der_M}^ru)-vF-r\gamma \ <\ \Gamma^>.
\]
In particular, $|vu|<\beta$ and $vF(\ojet{\der_M}^ru)<vF+r\gamma+\beta$. As $M$ is an elementary $\TdO$-extension of $K$, the lemma follows. 
\end{proof}

\begin{corollary}
\label{cor-sameval}
Suppose $S(\der) = \{0\}$, let $\beta \in \Gamma^>$, and suppose that $vF(\jet^ra) = vF(\jet^rb)$ for all $a,b \in K$ with $|va|,|vb|<\beta$. Then there is $\gamma \in \Gamma(\der)$ such that $vF(\jet^ru)\leq vF+r\gamma$ for all $u\in K$ with $|vu|<\beta$.
\end{corollary}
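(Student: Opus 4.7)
The plan is to apply Lemma~\ref{lem-approxbound} with a family of shrinking parameters $\beta' < \beta$ and then exploit downward closure of $\Gamma(\der)$ together with the closure-from-below property of Lemma~\ref{lem-supisactive} to collapse these approximations into a single $\gamma$. Since $v(1) = 0 < \beta$, the hypothesis produces a single value $\alpha := vF(\jet^r 1) \in \Gamma$ with $vF(\jet^r u) = \alpha$ for every $u \in K$ satisfying $|vu| < \beta$. The task is then to locate $\gamma \in \Gamma(\der)$ with $\alpha \leq vF + r\gamma$. A preliminary remark: $\Gamma$ is a $\Lambda$-vector space and $\Lambda \supseteq \Q$, so $\Gamma$ is divisible, and in particular $(0,\beta)\cap\Gamma$ is nonempty and cofinal at $0$.

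Fix any $\beta' \in \Gamma$ with $0 < \beta' < \beta$. Lemma~\ref{lem-approxbound} applied with $\beta'$ in place of $\beta$ yields $\gamma_{\beta'} \in \Gamma(\der)$ and $u_{\beta'} \in K$ with $|vu_{\beta'}| < \beta' < \beta$ such that
\[
\alpha \;=\; vF(\jet^r u_{\beta'}) \;\leq\; vF + r\gamma_{\beta'} + \beta',
\]
the equality coming from the hypothesis. When $r = 0$ this reads $\alpha \leq vF + \beta'$; letting $\beta'$ shrink through $\Gamma^>$ forces $\alpha \leq vF$, and any $\gamma \in \Gamma(\der)$, which is nonempty because $\der$ is continuous on $K$, then works.

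When $r \geq 1$, set $\gamma := (\alpha - vF)/r \in \Gamma$, using divisibility. The displayed bound rearranges to $\gamma - \beta'/r \leq \gamma_{\beta'}$, and since $\Gamma(\der)$ is downward closed in $\Gamma$, this gives $\gamma - \beta'/r \in \Gamma(\der)$ for every $\beta' \in (0,\beta)$. As $\beta'$ varies, the numbers $\beta'/r$ exhaust $(0,\beta/r)$, and any $\epsilon \geq \beta/r$ is handled by comparing it to, say, $\beta/(2r)$ and invoking downward closure once more. Thus $\gamma - \epsilon \in \Gamma(\der)$ for every $\epsilon \in \Gamma^>$, whereupon Lemma~\ref{lem-supisactive} yields $\gamma \in \Gamma(\der)$ itself, and $vF + r\gamma = \alpha$ finishes the argument. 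The only delicate step is this final limit, and it is precisely what Lemma~\ref{lem-supisactive} was designed to provide; no genuinely new ingredient beyond Lemma~\ref{lem-approxbound} is needed.
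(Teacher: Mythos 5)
Your proof is correct and follows essentially the same route as the paper: set $\gamma := r^{-1}(\alpha - vF)$, apply Lemma~\ref{lem-approxbound} at a shrinking scale, use downward closure of $\Gamma(\der)$ to get $\gamma - \epsilon \in \Gamma(\der)$ for all $\epsilon > 0$, and conclude by Lemma~\ref{lem-supisactive}. The only cosmetic difference is that the paper disposes of $r = 0$ by a short direct argument (picking $a \in \cO^\times$ with $a \not\sim I_F$) rather than invoking Lemma~\ref{lem-approxbound}, and it parametrizes by $\epsilon$ with $r\epsilon < \beta$ rather than by $\beta' < \beta$; neither change affects the substance.
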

\begin{proof}
We first handle the case $r = 0$, so $I_F\in K$. Take $a \in \cO^\times$ with $a \not\sim I_F$. Then 
\[
F(a)\ =\ \fm_F(a-I_F)\ \succeq\ \fm_F,
\]
so $vF(a) \leq vF$. For $u \in K$ with $|vu|<\beta$, we have $vF(u)= vF(a) \leq vF$, as desired. Now assume $r > 0$. Let $a \in K$ with $|va|<\beta$ and set
\[
\alpha\ \coloneqq \ r\inv\big(vF(\jet^ra) -vF\big).
\]
For $u \in K$ with $|vu|<\beta$, we have $vF(\jet^ru) =vF(\jet^ra) = vF+r\alpha$, so it suffices to show that $\alpha \in \Gamma(\der)$. Let $\epsilon\in \Gamma^>$ with $r\epsilon<\beta$. Using Lemma~\ref{lem-approxbound}, take $b \in K^\times$ and $\gamma \in \Gamma(\der)$ with $|vb|<r\epsilon<\beta$ and
\[
vF(\jet^rb)\ \leq \ vF+r\gamma+r\epsilon.
\]
By assumption, $vF(\jet^ra) = vF(\jet^rb)$, so 
\[
\alpha- \epsilon \ =\ r\inv\big(vF(\jet^ra) -vF\big)-\epsilon \ =\ r\inv\big(vF(\jet^rb) -vF\big)-\epsilon \ \leq\ \gamma\ \in\ \Gamma(\der).
\]
As $\epsilon$ can be taken to be arbitrarily small, we have $\alpha \in \Gamma(\der)$ by Lemma~\ref{lem-supisactive}.
\end{proof}

\subsection{Eventual smallness} \label{subsec-evensmall}
In this subsection, let $\phi$ range over $K^\times$, let $\ell\prec1$ be an element in a $\TdO$-extension of $K$, and suppose $v(\ell- K)$ has no largest element. We say that \textbf{$F$ is small near $(K,\ell)$} if $I_F(\jet^{r-1}y) \prec 1$ for all $y \in K$ sufficiently close to $\ell$.

\begin{lemma}
\label{lem-smallsetdownward}
Let $\phi_0\in K^\times$ with $v\phi_0 \in \Gamma(\der)$ and suppose $v\phi\leq v\phi_0$. If $F^{\phi_0}$ is small near $(K^{\phi_0},\ell)$, then $F^\phi$ is small near $(K^\phi,\ell)$.
\end{lemma}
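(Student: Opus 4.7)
The plan is to reduce to the setting of $K^{\phi_0}$, where the derivation is small, and then transport the smallness statement via compositional conjugation by a bounded factor. Set $\derdelta := \phi_0\inv\der$, $\partial := \phi\inv\der$, and $\psi := \phi/\phi_0 \in K^\times$. Since $\Gamma(\der)$ is downward closed and $v\phi_0 \in \Gamma(\der)$, we also have $v\phi \in \Gamma(\der)$, so both $\derdelta$ and $\partial$ have small derivation on $K$. Moreover $v\psi = v\phi - v\phi_0 \leq 0$, so $\psi^{-1} \in \cO$. By Lemma~\ref{lem-prod}, $K^\phi = (K^{\phi_0})^\psi$ as $\Td$-structures, with the outer compositional conjugation performed in $(K^{\phi_0},\derdelta)$; likewise $F^\phi = (F^{\phi_0})^\psi$.

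Applying Lemma~\ref{lem-compconj} in $(K^{\phi_0},\derdelta)$ to $G := F^{\phi_0}$ and $\psi$, and using $Y^\psi_\derdelta(\ojet{\partial}^{r-1}y) = \ojet{\derdelta}^{r-1}y$, I would obtain
\[
I_{F^\phi}(\ojet{\partial}^{r-1}y)\ =\ \psi^{-r}\Big(I_{F^{\phi_0}}(\ojet{\derdelta}^{r-1}y)\, -\, \sum_{i=0}^{r-1}\xi^r_i(\psi)_\derdelta\,\partial^i y\Big).
\]
For $y$ sufficiently close to $\ell$, the hypothesis gives $I_{F^{\phi_0}}(\ojet{\derdelta}^{r-1}y) \prec 1$; combined with $\psi^{-r}\preceq 1$, the first summand is then $\prec 1$. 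What remains is to verify that $\psi^{-r}\xi^r_i(\psi)_\derdelta \cdot \partial^i y \prec 1$ for each $0 \leq i < r$ and all $y$ close to $\ell$.

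The principal technical step, which I expect to be the main obstacle, is showing that $\beta^n_k := \psi^{-n}\xi^n_k(\psi)_\derdelta$ lies in $\cO$ for all $0 \leq k \leq n$. I would prove this by induction on $n$ using the recurrence \eqref{eqn-xi}. The cases $\beta^n_0 = 0$ (for $n > 0$) and $\beta^n_n = 1$ are immediate. For the inductive step, writing $\xi^n_k = \psi^n\beta^n_k$ and using the identity $\psi^{-2}\derdelta\psi = -\derdelta(\psi^{-1})$ yields
\[
\beta^{n+1}_k\ =\ \beta^n_{k-1}\, -\, n\,\derdelta(\psi^{-1})\cdot\beta^n_k\, +\, \psi^{-1}\,\derdelta\beta^n_k.
\]
Each summand lies in $\cO$: the inductive hypothesis supplies $\beta^n_{k-1},\beta^n_k \in \cO$ and gives $\psi^{-1} \in \cO$ at hand; smallness of $\derdelta$ together with Fact~\ref{fact-bigtobig} yields $\derdelta\cO \subseteq \cO$, so $\derdelta(\psi^{-1}), \derdelta\beta^n_k \in \cO$.

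To finish, for $y$ close enough to $\ell$ that $v(\ell - y) > v\ell$—permissible since $v\ell \in v(\ell - K)$ and this set has no largest element—we get $vy = v\ell > 0$, so $y \in \smallo$. Smallness of $\partial$ then forces $\partial^i y \in \smallo$ for every $i \geq 0$. Combining $\beta^r_i \in \cO$ with $\partial^i y \in \smallo$ gives $\psi^{-r}\xi^r_i(\psi)_\derdelta \cdot \partial^i y \in \cO \cdot \smallo = \smallo$, hence $\prec 1$. Thus $I_{F^\phi}(\ojet{\partial}^{r-1}y) \prec 1$ for all $y$ sufficiently close to $\ell$, proving that $F^\phi$ is small near $(K^\phi,\ell)$.
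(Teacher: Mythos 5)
Your proof is correct and reaches the same decomposition
\[
I_{F^\phi}(\ojet{\partial}^{r-1}y)\ =\ \psi^{-r}I_{F^{\phi_0}}(\ojet{\derdelta}^{r-1}y)\ -\ \sum_{i=0}^{r-1}\psi^{-r}\xi_i^r(\psi)_\derdelta\,\partial^iy
\]
as the paper (the paper first reduces formally to $\phi_0 = 1$ by replacing $K$, $F$, $\phi$ with $K^{\phi_0}$, $F^{\phi_0}$, $\phi_0\inv\phi$, but this is cosmetically the same move). Where you genuinely diverge is in the key estimate $\psi^{-r}\xi_i^r(\psi)_\derdelta \preceq 1$. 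The paper first proves the claim $\psi^\dagger \preceq \psi$ by a two-case argument, with the case $\psi' \succ \psi \succ 1$ handled by citing \cite[Lemma~6.4.1]{ADH17}, and then invokes \cite[Lemma~2.2]{ADH18B} to convert $\psi^\dagger \preceq \psi$ into the needed bound. You instead give a self-contained induction on the recurrence~\eqref{eqn-xi}: writing $\beta^n_k = \psi^{-n}\xi^n_k(\psi)_\derdelta$, you derive $\beta^{n+1}_k = \beta^n_{k-1} - n\derdelta(\psi^{-1})\beta^n_k + \psi^{-1}\derdelta\beta^n_k$ and close the induction using only $\psi^{-1}\in\cO$ and $\derdelta\cO\subseteq\cO$ (Fact~\ref{fact-bigtobig}). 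This is a genuine simplification: it replaces two external citations with a short internal computation, and it sidesteps the $\psi^\dagger\preceq\psi$ claim entirely. (Incidentally, $\psi^\dagger\preceq\psi$ also drops out of your argument for free at the $n=2$ step, since $\beta^2_1 = -\derdelta(\psi^{-1})\in\cO$ is exactly that statement; so the paper's appeal to ADH17 Lemma~6.4.1 is avoidable either way.) The step where you take $v(\ell - y) > v\ell$ to force $y \in \smallo$ and then propagate $\partial^i y \in \smallo$ by smallness of $\partial$ is also fine and matches the intent of the paper's ``let $y\in\smallo$ close enough to $\ell$.''
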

\begin{proof}
By replacing $K$, $F$, and $\phi$ with $K^{\phi_0}$, $F^{\phi_0}$, and $\phi_0\inv \phi$, we may assume $\phi_0 = 1$ (so $K$ has small derivation) and $\phi\succeq 1$. Set $\derdelta \coloneqq \phi\inv\der$. Suppose $F$ is small near $(K,\ell)$ and let $y \in \smallo$ be close enough to $\ell$ so that $I_F(\jet^{r-1}y) \prec 1$. We claim that $I_{F^\phi}(\deltajet^{r-1}y)\prec 1$. By Lemma~\ref{lem-compconj}, we have
\[
I_{F^\phi}(\deltajet^{r-1}y)\ =\ \phi^{-r}\Big(I_F^\phi(\deltajet^{r-1}y)- \sum_{i=0}^{r-1}\xi_i^r(\phi)\derdelta^iy\Big)\ =\ \phi^{-r} I_F(\jet^{r-1}y)- \sum_{i=0}^{r-1}\phi^{-r}\xi_i^r(\phi)\derdelta^iy.
\]
As $I_F(\jet^{r-1}y) \prec 1$ and $\phi \succeq 1$, we have $\phi^{-r} I_F(\jet^{r-1}y)\prec 1$, so it remains to show
\[
\sum_{i=0}^{r-1}\phi^{-r}\xi_i^r(\phi)\derdelta^iy\ \prec \ 1.
\]
We claim that $\phi'/\phi \preceq \phi$. This is clear in the case that $\phi'\preceq \phi$, for then $\phi'/\phi \preceq 1 \preceq \phi$ (note that if $\phi \asymp 1$, then we are in this case by Fact~\ref{fact-bigtobig}). On the other hand, if $\phi' \succ \phi\succ 1$, then $\phi'/\phi \preceq \phi$ by~\cite[Lemma 6.4.1]{ADH17}. Now~\cite[Lemma 2.2]{ADH18B} gives $\phi^{-r}\xi_i^r(\phi) \preceq 1$ for each $i < r$. Since $K^\phi$ has small derivation and $y \prec 1$, we have $\phi^{-r}\xi_i^r(\phi)\derdelta^iy\prec 1$ for each $i<r$ as desired.
\end{proof}

\noindent
We say that \textbf{$F$ is eventually small near $(K,\ell)$} if $F^\phi$ is small near $(K^\phi,\ell)$ whenever $v\phi \in \Gamma(\der)$. Unlike smallness, eventual smallness is invariant under compositional conjugation: $F$ is eventually small near $(K,\ell)$ if and only if $F^\phi$ is eventually small near $(K^\phi,\ell)$. By the above lemma, the set
\[
\big\{v\phi\in \Gamma(\der):F^\phi\text{ is small near }(K^\phi,\ell)\big\}
\]
is a downward closed subset of $\Gamma(\der)$. Thus, $F$ is eventually small near $(K,\ell)$ if and only if $F^\phi$ is small near $(K^\phi,\ell)$ for all sufficiently large $v\phi \in \Gamma(\der)$, and $F$ is not eventually small near $(K,\ell)$ if and only if $F^\phi$ is not small near $(K^\phi,\ell)$ for all sufficiently large $v\phi \in \Gamma(\der)$. Eventual smallness serves as a crude analog of the Newton degree in~\cite{ADH17} and~\cite{ADH18B}; one should think of $F$ being eventually small as analogous to $F$ having positive Newton degree. Of course, Newton degree makes no sense for arbitrary definable functions.

\begin{lemma}
\label{lem-differentvals}
Suppose that $S(\der) = \{0\}$ and that $F$ is eventually small near $(K,\ell)$. For each $\beta \in \Gamma^>$, we have
\[
F(\jet^r a) \ \not\asymp\ F(\jet^rb)
\]
for some $a,b\in K$ with $va,vb >-\beta$.
\end{lemma}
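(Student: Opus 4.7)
The plan is by contradiction: suppose $F(\jet^r a) \asymp F(\jet^r b)$ for all $a, b \in K$ with $va, vb > -\beta$, and derive an impossibility via a single compositional conjugation. First I would rule out the degenerate subcase in which $F(\jet^r a) = 0$ for every such $a$. The set $Z := \{y \in K : F(\jet^r y) = 0\}$ satisfies $\jet^r(Z) \subseteq \Graph(I_F)$ with $\dim_\cL \Graph(I_F) = r$, so $Z$ is thin; but in this subcase $Z$ contains the open $v$-ball $\{y : vy > -\beta\}$, hence a nontrivial open interval, contradicting Proposition~\ref{prop-nondefinable}. Henceforth $F(\jet^r a) \neq 0$ throughout that set, with common valuation $v_0 \in \Gamma$.

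Next, since $|va|, |vb| < \beta$ implies $va, vb > -\beta$, the contradictory assumption yields the hypothesis of Corollary~\ref{cor-sameval}, producing $\alpha \in \Gamma(\der)$ with $v_0 \leq vF + r\alpha$ (with equality when $r \geq 1$). For the heart of the argument, pick $\phi \in K^\times$ with $v\phi = \alpha$. Since $\alpha \in \Gamma(\der)$, the eventual smallness hypothesis (which by definition applies to every $v\phi \in \Gamma(\der)$) guarantees that $F^\phi$ is small near $(K^\phi, \ell)$; the condition $v\phi \in \Gamma(\der)$ itself reads as $\derdelta\smallo \subseteq \smallo$, so $K^\phi$ has small derivation. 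Using that $\ell \prec 1$ (whence $v\ell > 0$) and $v(\ell - K)$ has no largest element, I would choose $y \in K$ close enough to $\ell$ that $y \in \smallo$, $vy > -\beta$, and $I_{F^\phi}(\deltajet^{r-1} y) \prec 1$ all hold. Iterating $\derdelta\smallo \subseteq \smallo$ gives $\derdelta^r y \prec 1$, so $\derdelta^r y - I_{F^\phi}(\deltajet^{r-1} y) \prec 1$, whence
\[
vF(\jet^r y) \;=\; vF^\phi(\deltajet^r y) \;>\; vF^\phi \;=\; vF + r\alpha \;\geq\; v_0.
\]
On the other hand, $vy > -\beta$ together with the contradictory assumption forces $vF(\jet^r y) = v_0$, the desired contradiction.

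The main subtlety I expect is pairing the $\alpha$ extracted from Corollary~\ref{cor-sameval} with a valid conjugation parameter: this works cleanly because $\alpha \in \Gamma(\der)$ lies in exactly the range over which eventual smallness guarantees smallness of $F^\phi$, so the same element serves both as the common ``level'' of $vF$ on bounded-valuation inputs and as the conjugating exponent. The only bookkeeping worth care is verifying that a single $y \in K$ close to $\ell$ can be found simultaneously satisfying all three smallness conditions, which is immediate from the assumption that $v(\ell - K)$ has no largest element.
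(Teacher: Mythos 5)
Your proof is correct and follows essentially the same route as the paper's: apply Corollary~\ref{cor-sameval} to produce $\gamma \in \Gamma(\der)$, conjugate by $\phi$ with $v\phi = \gamma$, and use smallness of $F^\phi$ near $(K^\phi,\ell)$ to find $y$ close to $\ell$ at which $vF(\jet^r y)$ exceeds $vF + r\gamma$. The only genuine difference is that you make explicit the degenerate possibility $F(\jet^r a) = 0$ on the entire ball, which you eliminate via the thin-set argument (Proposition~\ref{prop-nondefinable}); the paper leaves this implicit, relying on the internals of Corollary~\ref{cor-sameval} (whose proof produces a point $a$ with $vF(\jet^r a) \in \Gamma$, forcing all such values finite). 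Your explicit check is a welcome clarification rather than a deviation. The rest — the observation that $y \in \smallo$ automatically gives $vy > 0 > -\beta$, and the conclusion $vF(\jet^r y) > vF + r\gamma \geq v_0$ via $\fm_{F^\phi} = \phi^r\fm_F$ and $\derdelta^r y - I_{F^\phi}(\deltajet^{r-1}y) \prec 1$ — matches the paper's computation.
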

\begin{proof}
Let $\beta \in \Gamma^>$ and let $a \in K$ with $|va|<\beta$. If $F(\jet^r a) \not\asymp F(\jet^rb)$ for some $b \in K$ with $|vb|<\beta$, then we are done, so we may assume $F(\jet^r a) \asymp F(\jet^rb)$ for all $b \in K$ with $|vb|<\beta$. Then Corollary~\ref{cor-sameval} gives $\gamma \in \Gamma(\der)$ with $vF(\jet^ra)\leq vF+r\gamma$. Take $\phi$ with $v\phi = \gamma$. Then $F^\phi$ is small near $(K^\phi,\ell)$, so we may take $y \in \smallo$ close enough to $\ell$ so that $I_{F^\phi}(\deltajet^{r-1} y) \prec 1$. Since $\derdelta^ry\prec 1$ as well, we have
\[
F(\jet^ry)\ =\ F^\phi(\deltajet^ry)\ =\ \phi^r\fm_F\big(\derdelta^ry-I_{F^\phi}(\deltajet^{r-1}y)\big) \ \prec\ \phi^r\fm_F,
\]
so $vF(\jet^ry) > vF+r\gamma\geq vF(\jet^ra)$.
\end{proof}

\section{Vanishing}\label{sec-vanishing}
\noindent
In this section, let $\phi$ range over $K^\times$, let $\ell$ be an element in a strict $\TdO$-extension $L$ of $K$, and suppose $v(\ell- K)$ has no largest element. Unlike in Subsection~\ref{subsec-evensmall}, we do not assume $\ell \prec 1$.

\begin{definition}
Let $F\colon K^{1+r} \to K$ be an $\cL(K)$-definable function in implicit form.
We say \textbf{$F$ vanishes at $(K,\ell)$} if $F_{+a,\times d}$ is eventually small near $\big(K,d\inv(\ell-a)\big)$ for all $a \in K$ and $d \in K^\times$ with $\ell-a \prec d$.
\end{definition}

\noindent
Let $Z(K,\ell)$ be the set of all $\cL(K)$-definable functions in implicit form which vanish at $(K,\ell)$. For each $r$, we let $Z_r(K,\ell)$ be the functions in $Z(K,\ell)$ of arity $1+r$, so $Z(K,\ell)$ is equal to the disjoint union $\bigcup_r Z_r(K,\ell)$. The set $Z(K,\ell)$ serves a similar purpose to the set in~\cite{ADH17} and~\cite{ADH18B} with the same name. Note that $Z(K,\ell)$ does not depend on $\ell$, only on the $\LO$-type of $\ell$ over $K$. We will show in Proposition~\ref{prop-imsim} below that if $Z(K,\ell) = \emptyset$ then $F(\jet^r\ell)\neq 0$ for all $\cL(K)$-definable functions $F$ in implicit form.

\begin{lemma}
\label{lem-zeroempty}
$Z_0(K,\ell) = \emptyset$.
\end{lemma}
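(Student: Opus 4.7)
The plan is to unfold the three nested definitions (vanishing, eventual smallness, smallness) in the case $r = 0$ and then exploit the hypothesis that $v(\ell - K)$ has no largest element. A $0$-ary $\cL(K)$-definable function in implicit form has the shape $F = \fm_F(Y_0 - I_F)$ with $\fm_F \in K^\times$ and $I_F \in K$ (a constant, by the convention for nullary functions). Applying Lemma~\ref{lem-compconj} with $r = 0$ shows that $F^\phi = F$ for every $\phi \in K^\times$, so ``eventually small'' degenerates to the single assertion ``small''; and smallness of a $0$-ary function in implicit form boils down to $I_F \prec 1$, because $I_F(\jet^{-1} y) = I_F$ does not depend on $y$.

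Combining this with the $r = 0$ case of Lemma~\ref{lem-addmultconj}, which yields $I_{F_{+a, \times d}} = d^{-1}(I_F - a)$, I would rewrite the vanishing condition as
\[
  I_F - a\ \prec\ d \quad\text{for every } a \in K \text{ and every } d \in K^\times \text{ with } \ell - a \prec d.
\]
This reformulation is the crux of the argument; after that, only an elementary ultrametric calculation remains.

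To finish, I would assume for contradiction that some such $F$ belongs to $Z_0(K, \ell)$. The hypothesis that $v(\ell - K)$ has no largest element forces $\ell \notin K$, so in particular $\ell \neq I_F$ and $v(\ell - I_F) \in \Gamma$. Using the absence of a largest element once more, I would pick $a \in K$ with $v(\ell - a) > v(\ell - I_F)$; the ultrametric inequality then forces $v(I_F - a) = v(\ell - I_F)$. Next, I would choose $d \in K^\times$ with $v(\ell - I_F) < vd < v(\ell - a)$, which is possible because, under Assumption~\ref{assp-standing}, $\Gamma$ is a nontrivial $\Lambda$-vector space with $\Q \subseteq \Lambda$, hence a densely ordered divisible group. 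The pair $(a, d)$ then satisfies $\ell - a \prec d$ but $I_F - a \not\prec d$, contradicting the displayed criterion. The only external ingredient beyond unwinding the definitions is the density of $\Gamma$, so I do not expect any serious obstacle.
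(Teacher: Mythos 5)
Your proof is correct and follows essentially the same route as the paper's: unfold the definitions in the degenerate case $r=0$ (where compositional conjugation is the identity and eventual smallness collapses to smallness), then exhibit a pair $(a,d)$ with $\ell - a \prec d$ but $I_F - a \not\prec d$. The only difference is that the paper picks $d \asymp \ell - I_F$ directly (so that $I_F - a \asymp d$ for any $a$ with $\ell - a \prec d$), which sidesteps the appeal to density of $\Gamma$ and yields a direct rather than contradiction-based argument.
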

\begin{proof}
Let $F\colon K\to K$ be an $\cL(K)$-definable function in implicit form, so $I_F \in K$. Let $d \in K^\times$ with $d\asymp \ell-I_F$ and let $a \in K$ with $\ell-a\prec d$. Then $I_F - a \asymp d$, so 
\[
I_{F_{+a,\times d}} \ =\ d\inv(I_F-a)\ \asymp\ 1
\]
and $F_{+a,\times d}$ is not small near $\big(K,d\inv(\ell-a)\big)$. As $F^\phi_{+a,\times d} = F_{+a,\times d}$ for all $\phi$, we see that $F_{+a,\times d}$ is not eventually small near $\big(K,d\inv(\ell-a)\big)$, so $F \not\in Z(K,\ell)$.
\end{proof}

\begin{lemma}
\label{lem-differentclosevals}
Suppose $S(\der) = \{0\}$, let $F \in Z_r(K,\ell)$, and let $y \in K$. Then there is $z \in K$ with $\ell-z\prec \ell-y$ and $F(\jet^ry)\not\asymp F(\jet^rz)$.
\end{lemma}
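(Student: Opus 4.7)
The plan is to produce $z$ by shifting $F$ about a point $y^{*}\in K$ that sits strictly closer to $\ell$ than $y$ does, and then invoking Lemma~\ref{lem-differentvals} on the shifted function. Since $v(\ell-K)$ has no largest element, pick $y^{*}\in K$ with $v(\ell-y^{*})>v(\ell-y)$. If $F(\jet^r y)\not\asymp F(\jet^r y^{*})$, set $z:=y^{*}$ and finish; so I may assume $F(\jet^r y)\asymp F(\jet^r y^{*})$ from now on.

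Because $\Gamma$ is a nontrivial divisible $\Lambda$-vector space (and hence densely ordered), choose $d\in K^{\times}$ with $v(\ell-y)<vd<v(\ell-y^{*})$, and set $H:=F_{+y^{*},\times d}$. By Lemma~\ref{lem-addmultconj}, $H$ is in implicit form, and since $\ell-y^{*}\prec d$, the vanishing of $F$ at $(K,\ell)$ applied to the pair $(y^{*},d)$ makes $H$ eventually small near $(K,\tilde\ell)$, where $\tilde\ell:=d^{-1}(\ell-y^{*})\prec 1$. The set $v(\tilde\ell-K)=-vd+v(\ell-K)$ is a translate of $v(\ell-K)$ and so has no largest element either, placing $H$ into the hypotheses of Lemma~\ref{lem-differentvals}. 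I apply that lemma with $\beta_0:=vd-v(\ell-y)\in\Gamma^{>}$ to get $a,b\in K$ with $va,vb>-\beta_0$ and $H(\jet^r a)\not\asymp H(\jet^r b)$; transitivity of $\asymp$ then yields some $c\in\{a,b\}$ with $H(\jet^r c)\not\asymp H(\jet^r 0)=F(\jet^r y^{*})$. Taking $z:=dc+y^{*}$ gives $F(\jet^r z)=H(\jet^r c)\not\asymp F(\jet^r y^{*})\asymp F(\jet^r y)$, so $F(\jet^r z)\not\asymp F(\jet^r y)$.

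It remains to verify $\ell-z\prec\ell-y$. The choice of $\beta_0$ was made precisely so that $v(dc)=vd+vc>vd-\beta_0=v(\ell-y)$; together with $v(\ell-y^{*})>v(\ell-y)$, the ultrametric inequality applied to $\ell-z=(\ell-y^{*})-dc$ forces $v(\ell-z)>v(\ell-y)$. The one delicate point is the calibration of $vd$, which must be small enough ($vd<v(\ell-y^{*})$) to place $(y^{*},d)$ into the vanishing hypothesis, and large enough ($vd>v(\ell-y)$) to force the points $z=dc+y^{*}$ supplied by Lemma~\ref{lem-differentvals} to land strictly closer to $\ell$ than $y$; selecting $vd$ strictly between $v(\ell-y)$ and $v(\ell-y^{*})$ balances these two demands.
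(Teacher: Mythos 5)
Your proof is correct and follows essentially the same route as the paper: choose a center $y^*$ closer to $\ell$ than $y$, choose $d$ with $v(\ell-y) < vd < v(\ell-y^*)$, apply Lemma~\ref{lem-differentvals} to the affine conjugate (which is eventually small by vanishing), and use a pigeonhole argument plus the bound $\beta_0 = vd - v(\ell-y)$ to land $z$ strictly closer to $\ell$. The one minor redundancy is the initial case split on whether $F(\jet^r y) \asymp F(\jet^r y^*)$ and the resulting detour through $H(\jet^r 0) = F(\jet^r y^*)$: the paper instead applies the pigeonhole directly against $F(\jet^r y)$, which makes the intermediate comparison with $F(\jet^r y^*)$ unnecessary, though your version is not wrong.
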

\begin{proof}
Let $a \in K$ and $d \in K^\times$ with $\ell-a \prec d\prec \ell-y$. Set $\beta \coloneqq vd- v(\ell-y)> 0$. Since $F_{+a,\times d}$ is eventually small near $\big(K,d\inv(\ell-a)\big)$, Lemma~\ref{lem-differentvals} gives $b_1,b_2 \in K$ with $vb_1,vb_2>-\beta$ and 
\[
vF_{+a,\times d}(\jet^rb_1)\ \neq\ vF_{+a,\times d}(\jet^rb_2).
\]
Either $vF_{+a,\times d}(\jet^rb_1)$ or $v F_{+a,\times d}(\jet^rb_2)$ is different from $v F(\jet^ry)$, so suppose $vF_{+a,\times d}(\jet^rb_1)\neq v F(\jet^ry)$ and set $z \coloneqq db_1+a$. Then $F(\jet^rz) = F_{+a,\times d}(\jet^rb_1)\not\asymp F(\jet^ry)$ and, since $v(db_1)> vd-\beta = v(\ell-y)$, we have 
\[
v(\ell - z)\ =\ v\big((\ell-a)- db_1\big)\ \geq\ \min\big\{v(\ell-a),v(db_1)\big\} \ >\ v(\ell-y).\qedhere
\]
\end{proof}

\subsection{Behavior of nonvanishing functions}
Fix $r$ and suppose $Z_q(K,\ell) = \emptyset$ for all $q<r$. Let $M$ be an arbitrary strict $\TdO$-extension of $K$. Our goal is to prove the following result:

\begin{proposition}
\label{prop-imsim}
Let $F\colon K^{1+r} \to K$ be an $\cL(K)$-definable function in implicit form with $F\not\in Z_r(K,\ell)$. Then $F(\jet^r\ell)\neq 0$ and
\[
F(\jet^r\ell)\ \sim \ F(\jet^ry)
\]
for all $y \in M$ sufficiently close to $\ell$.
\end{proposition}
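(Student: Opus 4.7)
My plan is to prove the proposition by induction on the arity index $r$. The base case $r=0$ is direct: here $F=\fm_F(Y_0-I_F)$ with $I_F\in K$, and $\ell\not\in K$ (else $v(\ell-K)\ni\infty$ would be its largest element), so $F(\ell)=\fm_F(\ell-I_F)\neq 0$; for any $y\in K$ with $v(\ell-y)>v(\ell-I_F)$ we get $y-I_F\sim \ell-I_F$ and hence $F(y)\sim F(\ell)$. For the inductive step I first reduce: using $F\not\in Z_r(K,\ell)$, pick $a\in K$ and $d\in K^\times$ with $\ell-a\prec d$ such that $F_{+a,\times d}$ is not eventually small near $(K,d\inv(\ell-a))$, and replace $F$ and $\ell$ by $F_{+a,\times d}$ and $d\inv(\ell-a)$ respectively. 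Both the conclusion $F(\jet^r\ell)\sim F(\jet^r y)$ and the hypothesis $Z_q(K,\ell)=\emptyset$ for $q<r$ transform equivariantly under this change, so I may assume $\ell\prec 1$ and $F$ itself is not eventually small near $(K,\ell)$.

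Next I use the inductive hypothesis to show that $\jet^{r-1}\ell$ lies in an $\cL(K)$-definable open cell $U\subseteq K^r$ on which $I_F$ is $\cC^1$; this secures the continuity needed for the main computation. Take a cell decomposition of $K^r$ compatible with $I_F$. If $\jet^{r-1}\ell$ were in a lower-dimensional cell, then by the recursive structure of cells there would exist $j<r$ and an $\cL(K)$-definable function $h\colon K^j\to K$ with $\ell^{(j)}=h(\jet^{j-1}\ell)$. The function $G(Y_0,\ldots,Y_j):=Y_j-h(Y_0,\ldots,Y_{j-1})$ is in implicit form and satisfies $G(\jet^j\ell)=0$; but the hypothesis $Z_j(K,\ell)=\emptyset$ gives $G\not\in Z_j(K,\ell)$, and by the inductive proposition this would force $G(\jet^j\ell)\neq 0$—a contradiction.

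The heart of the argument is showing $F(\jet^r\ell)\neq 0$ by contradiction. Assume $\ell^{(r)}=I_F(\jet^{r-1}\ell)$; I will derive that $F$ is eventually small near $(K,\ell)$. Fix $\phi$ with $v\phi\in\Gamma(\der)$. Then $K^\phi$ has small derivation and, by strictness of $L$ over $K$, so does $L^\phi$; hence $\ell\in\smallo_L$ yields $\derdelta^r\ell\in\smallo_L$, where $\derdelta:=\phi\inv\der$. Using Lemma~\ref{lem-compconj} to express
\[
I_{F^\phi}(\deltajet^{r-1}y)\ =\ \phi^{-r}\Big(I_F(\jet^{r-1}y)-\sum_{i=0}^{r-1}\xi^r_i(\phi)\,\derdelta^i y\Big),
\]
and the identity $\ell^{(r)}=\sum_{i=0}^r\xi^r_i(\phi)\derdelta^i\ell$ coming from (\ref{eqn-xi}), the continuity of $\der$ on $L$ together with the continuity of $I_F$ on $U$ make this expression converge to $\derdelta^r\ell\in\smallo_L$ as $y\to\ell$ in $K$. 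Hence $I_{F^\phi}(\deltajet^{r-1}y)\prec 1$ for $y$ sufficiently close to $\ell$, so $F^\phi$ is small near $(K^\phi,\ell)$; as $\phi\in\Gamma(\der)$ was arbitrary, $F$ is eventually small near $(K,\ell)$, contradicting the reduction. Once $F(\jet^r\ell)\neq 0$, the equivalence $F(\jet^r y)\sim F(\jet^r\ell)$ for $y$ close to $\ell$ follows immediately: the same cell-position argument applied in $K^{1+r}$ ensures $F$ is continuous near $\jet^r\ell$, so $F(\jet^r y)\to F(\jet^r\ell)$ and we conclude by choosing $y$ close enough that $v(F(\jet^r y)-F(\jet^r\ell))>vF(\jet^r\ell)$. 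The main obstacle is the cell-position step, where the inductive hypothesis is essential; without it the limit computation in the nonvanishing argument would break down because $I_F$ could be discontinuous at $\jet^{r-1}\ell$.
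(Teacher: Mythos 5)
The central step of your argument — the claim that $I_{F^\phi}(\deltajet^{r-1}y)\to\derdelta^r\ell$ as $y\to\ell$, justified by ``the continuity of $\der$ on $L$ together with the continuity of $I_F$ on $U$'' — has a genuine gap. Continuity of $\der_L$ and of $I_F$ at the point $\jet^{r-1}\ell\in L^r$ gives a modulus of continuity stated in terms of $\Gamma_L$: for a given target $\gamma\in\Gamma_L$ there is a threshold $\gamma'\in\Gamma_L$ such that $v(\jet^{r-1}y-\jet^{r-1}\ell)>\gamma'$ suffices. But the quantities $v(\ell^{(i)}-y^{(i)})$ for $y\in K$ all lie in the cuts $v(\ell^{(i)}-K)\subseteq\Gamma$, which need not be cofinal in $\Gamma$, let alone in $\Gamma_L$ (the extension $L$ is an arbitrary strict extension, not immediate). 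So you cannot guarantee that $\jet^{r-1}y$ ever enters the neighborhood of $\jet^{r-1}\ell$ on which $I_F$ is controlled. Strictness only gives the qualitative bound $v((\ell-y)^{(i)})>i\,v\phi$; it does not give quantitative estimates tied to $v(\ell-y)$. The correct tool, which bypasses topological continuity entirely, is the valuation-theoretic Corollary~\ref{cor-generalclose}: applied to the $\cL(K)$-definable function $H:=I_{F^\phi}\circ(Y^\phi_\der)^{-1}\colon K^r\to K$, it gives (from (IH) via Lemma~\ref{lem-incell} and Lemma~\ref{lem-Lapprox}) that $H(\jet^{r-1}y)\sim H(\jet^{r-1}\ell)=\derdelta^r\ell\prec 1$ (or both zero) for $y$ close to $\ell$. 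With this replacement your contrapositive argument for $F(\jet^r\ell)\neq 0$ is sound, and is indeed a cleaner route than the paper's direct argument.

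The second assertion, $F(\jet^r y)\sim F(\jet^r\ell)$, has a deeper problem that a citation swap does not fix. You invoke continuity of $F$ at $\jet^r\ell$, but this requires $y^{(r)}\to\ell^{(r)}$, and neither (IH) nor Corollary~\ref{cor-generalclose} controls the $r$-th derivative — they only speak about arities $\leq r$. (Also note that your ``same cell-position argument applied in $K^{1+r}$'' cannot be run: the available hypothesis is $Z_q(K,\ell)=\emptyset$ for $q<r$, not for $q=r$.) The paper's way around this is the decomposition $F(\jet^r y)=\fm\bigl(\derdelta^r z-G(\jet^{r-1}y)\bigr)$ where $z=d\inv(y-a)$ and $G:=\bigl(I_{F^\phi_{+a,\times d}}\bigr)^{\phi\inv}_{-d\inv a,\times d\inv}$ is an $r$-ary (not $(r{+}1)$-ary) $\cL(K)$-definable function. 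The non-eventual-smallness witness forces $G(\jet^{r-1}\ell)\succeq 1$, while $\derdelta^r z\prec 1$ by strictness; hence the $r$-th derivative term is dominated, and Corollary~\ref{cor-generalclose} applied to $G$ yields both $F(\jet^r\ell)\sim-\fm\,G(\jet^{r-1}\ell)\neq 0$ and $F(\jet^r y)\sim F(\jet^r\ell)$. Your proof does not construct $G$ and therefore has no way to absorb the uncontrolled $y^{(r)}-\ell^{(r)}$ term; this is the piece of the argument that genuinely needs the paper's idea.
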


\noindent
This proposition requires a somewhat lengthy proof by induction, so we make the following hypothesis.

\begin{induction*}[IH]We assume that for all $q<r$ and all $\cL(K)$-definable functions $F\colon K^{1+q} \to K$ in implicit form, we have $F(\jet^q\ell)\neq 0$ and
\[
F(\jet^q\ell)\ \sim \ F(\jet^qy)
\]
for all $y \in M$ sufficiently close to $\ell$.
\end{induction*}

\begin{lemma}\label{lem-isimm}
Suppose (IH) holds. Then $K\langle \jet^{r-1}\ell\rangle$ is an immediate $\TO$-extension of $K$. 
\end{lemma}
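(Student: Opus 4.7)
I would prove the lemma by induction on $q$ for $0 \leq q \leq r-1$, showing at each stage that $K\langle\jet^q\ell\rangle$ is an immediate $\TO$-extension of $K$; the statement of the lemma is then the case $q = r-1$. Since immediate $\TO$-extensions compose, it is enough to verify that each one-step tower $K\langle\jet^{q-1}\ell\rangle \subseteq K\langle\jet^q\ell\rangle$ is immediate, where by convention $K\langle\jet^{-1}\ell\rangle := K$.

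The base case $q = 0$ is the simple $\TO$-extension $K \subseteq K\langle\ell\rangle$: by the standing assumption $v(\ell-K)$ has no largest element, so Lemma~\ref{lem-immediateext2} applies (using that $T$ is power bounded) and $K\langle\ell\rangle$ is immediate over $K$.

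For the inductive step, fix $q$ with $1 \leq q \leq r-1$. By Lemma~\ref{lem-immediateext2} applied to the simple $\TO$-extension $K\langle\jet^{q-1}\ell\rangle \subseteq K\langle\jet^q\ell\rangle$, it suffices to show that $v\!\left(\ell^{(q)} - K\langle\jet^{q-1}\ell\rangle\right)$ has no largest element. Let $a \in K\langle\jet^{q-1}\ell\rangle$, and write $a = G(\jet^{q-1}\ell)$ for some $\cL(K)$-definable $G\colon K^q \to K$. Consider the $\cL(K)$-definable function $F\colon K^{1+q}\to K$ in implicit form with $\fm_F = 1$ and $I_F = G$, namely $F := Y_q - G(Y_0,\ldots,Y_{q-1})$. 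Then $F(\jet^q\ell) = \ell^{(q)} - a$, and since $q < r$, (IH) gives $F(\jet^q\ell)\neq 0$ together with some $y \in K$ sufficiently close to $\ell$ satisfying $F(\jet^q\ell) \sim F(\jet^q y)$. The element $c := F(\jet^q y) = y^{(q)} - G(\jet^{q-1}y)$ lies in $K^\times$ and satisfies $(\ell^{(q)} - a) - c \prec \ell^{(q)} - a$, so $b := a + c \in K\langle\jet^{q-1}\ell\rangle$ witnesses $v(\ell^{(q)}-b) > v(\ell^{(q)}-a)$, as desired. In particular this also shows $\ell^{(q)}\notin K\langle\jet^{q-1}\ell\rangle$, so the extension is genuinely simple.

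The only real content of the argument is recognizing that an arbitrary element of $K\langle\jet^{q-1}\ell\rangle$ can be matched with a trivially-implicit-form function of arity $1+q \leq r$, to which (IH) directly applies and delivers the sharper $K$-rational approximation demanded by Lemma~\ref{lem-immediateext2}(1). I do not anticipate a real obstacle once this translation is set up; the power boundedness of $T$ (needed for Lemma~\ref{lem-immediateext2}) is part of the standing Assumption~\ref{assp-standing}.
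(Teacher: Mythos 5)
Your proof is correct and follows essentially the same route as the paper: induct on $q<r$, reduce the one-step extension $K\langle\jet^{q-1}\ell\rangle\subseteq K\langle\jet^{q}\ell\rangle$ to an application of Lemma~\ref{lem-immediateext2}, and feed the implicit-form function $F=Y_q-G$ into (IH) to produce the needed $K$-rational approximations of $\ell^{(q)}-G(\jet^{q-1}\ell)$. The only cosmetic difference is that you verify condition (1) of Lemma~\ref{lem-immediateext2} while the paper verifies the equivalent condition (2); the substance is identical.
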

\begin{proof}
Let $q< r$ be given and assume $K\langle \jet^{q-1}\ell\rangle$ is an immediate $\TO$-extension of $K$ (this holds vacuously when $q = 0$). We will show that $K\langle \jet^q \ell\rangle$ is an immediate $\TO$-extension of $K\langle \jet^{q-1}\ell\rangle$, from which the lemma follows by induction. Let $G\colon K^q\to K$ be an $\cL(K)$-definable function. For all $u \in K$ sufficiently close to $\ell$, we have
\[
\ell^{(q)}- G(\jet^{q-1}\ell) \ \sim\ u^{(q)}- G(\jet^{q-1}u)\ \in \ K
\]
by (IH). Since $G$ is arbitrary, we may apply Lemma~\ref{lem-immediateext2} with $K\langle \jet^{q-1}\ell\rangle$ and $\ell^{(q)}$ in place of $K$ and $\ell$ to get that $K\langle \jet^q\ell\rangle$ is an immediate $\TO$-extension of $K\langle \jet^{q-1}\ell\rangle$.
\end{proof}

\begin{lemma}
\label{lem-incell}
Suppose (IH) holds, let $A \subseteq K^r$ be $\LO(K)$-definable, and suppose $\jet^{r-1}(\ell)\in A^L$. Then $A$ has nonempty interior and $\jet^{r-1}(y) \in A^M$ for all $y \in M$ sufficiently close to $\ell$.
\end{lemma}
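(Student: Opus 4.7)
The plan is to first reduce to the case of an $\cL(K)$-definable cell. Since (IH) is in force, Lemma~\ref{lem-isimm} yields that $K\langle\jet^{r-1}\ell\rangle$ is an immediate $\TO$-extension of $K$, and Lemma~\ref{lem-Lapprox} then produces an $\cL(K)$-definable cell $D\subseteq A$ with $\jet^{r-1}(\ell)\in D^{K\langle\jet^{r-1}\ell\rangle}\subseteq D^L$. It suffices to show that $D$ is open and that $\jet^{r-1}(y)\in D$ for every $y \in K$ sufficiently close to $\ell$.

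For openness, I would suppose toward a contradiction that $\dimL D<r$. Then $D$ has cell-type $(i_0,\ldots,i_{r-1})$ with some $i_k = 0$; taking the least such $k$, there is a continuous $\cL(K)$-definable function $g$ on the open projection $D_k := \pi_k(D)$ expressing the coordinate $Y_k$ on $D$ as a function of $Y_0,\ldots,Y_{k-1}$ (a constant when $k = 0$). Extending $g$ arbitrarily to an $\cL(K)$-definable $\tilde g\colon K^k\to K$, the function $F := Y_k - \tilde g(Y_0,\ldots,Y_{k-1})$ is in implicit form of arity $1+k \leq r$; evaluating, $F(\jet^k\ell) = \ell^{(k)} - g(\jet^{k-1}\ell) = 0$ in $L$, contradicting (IH).

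For the final assertion, since $D$ is open it admits a cell description $D_1 = (a_0,b_0)$ and $D_k = \{(z,y)\in D_{k-1}\times K : \alpha_k(z) < y < \beta_k(z)\}$ for $2\leq k\leq r$, with $\alpha_k,\beta_k\colon D_{k-1}\to K\cup\{\pm\infty\}$ continuous and $\cL(K)$-definable. I would prove by induction on $k \in \{1,\ldots,r\}$ that $\jet^{k-1}(y)\in D_k$ for every $y$ sufficiently close to $\ell$; the case $k = r$ is the conclusion. The base case $k = 1$ follows from $a_0<\ell<b_0$ in $L$ together with the fact that $v(\ell-K)$ has no largest element. For the inductive step, extend each finite $\alpha_k,\beta_k$ arbitrarily to total $\cL(K)$-definable $\tilde\alpha_k,\tilde\beta_k\colon K^{k-1}\to K$ and apply (IH) to the implicit-form functions $Y_{k-1}-\tilde\alpha_k(Y_0,\ldots,Y_{k-2})$ and $Y_{k-1}-\tilde\beta_k(Y_0,\ldots,Y_{k-2})$ (each of arity $k\leq r$): this gives $y^{(k-1)}-\tilde\alpha_k(\jet^{k-2}y)\sim\ell^{(k-1)}-\alpha_k(\jet^{k-2}\ell)>0$ and the analog for $\beta_k$. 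Invoking the inductive assumption $\jet^{k-2}y\in D_{k-1}$ (on which $\tilde\alpha_k = \alpha_k$ and $\tilde\beta_k = \beta_k$) together with the sign preservation of $\sim$ then yields $\alpha_k(\jet^{k-2}y) < y^{(k-1)} < \beta_k(\jet^{k-2}y)$, i.e., $\jet^{k-1}y\in D_k$. The main technical point is the interplay between the partial bound functions $\alpha_k,\beta_k$ and (IH)'s demand for total implicit-form data; the induction on $k$ is arranged precisely so that $\tilde\alpha_k, \tilde\beta_k$ are only ever evaluated at points of $D_{k-1}$, where they agree with the genuine bounds. The cases $\alpha_k = -\infty$ or $\beta_k = +\infty$ impose no constraint and can be skipped.
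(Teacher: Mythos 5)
Your proof is correct and follows essentially the same approach as the paper: reduce to an $\cL(K)$-definable cell $D$ via Lemmas~\ref{lem-Lapprox} and~\ref{lem-isimm}, then use (IH) applied to implicit-form functions of arity $\leq r$ (built by extending the cell's bounding/graphing functions to total $\cL(K)$-definable ones) together with sign-preservation of $\sim$. The only difference is organizational — you establish openness of $D$ in one pass and the containment of jets in a second induction, whereas the paper folds both into a single induction on the projections $\pi_q(D)$ — but the key technical ideas and invocations of (IH) are identical.
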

\begin{proof}
Using Lemmas~\ref{lem-Lapprox} and~\ref{lem-isimm}, we take an $\cL(K)$-definable cell $D$ contained in $A$ with $\jet^{r-1}(\ell)\in D^L$. Let $q<r$ be given and assume $\pi_q(D)$ is open and $\jet^{q-1}(y) \in \pi_q(D^M)$ for all $y \in M$ sufficiently close to $\ell$ (this holds vacuously when $q = 0$). We will show that $\pi_{q+1}(D)$ is open and $\jet^q(y) \in \pi_{q+1}(D^M)$ for all $y \in M$ sufficiently close to $\ell$, from which the lemma follows by induction. If $\pi_{q+1}(D)$ is not open, then $\pi_{q+1}(D) = \Graph\big(G|_{\pi_q(D)}\big)$ for some $\cL(K)$-definable function $G\colon K^q \to K$, so $\ell^{(q)} = G(\jet^{q-1} \ell)$, contradicting (IH). Therefore, $\pi_{q+1}(D)$ is open. Suppose $\pi_{q+1}(D)$ is of the form 
\[
\big\{(a,b) :a \in \pi_q(D)\text{ and }G(a)<b<H(a)\big\}
\]
for some $\cL(K)$-definable functions $G,H\colon K^q \to K$. Then (IH) gives
\[
 y^{(q)}-G(\jet^{q-1}y) \ \sim \ \ell^{(q)}-G(\jet^{q-1}\ell)\ >\ 0\ >\ \ell^{(q)}-H(\jet^{q-1}\ell)\ \sim\ y^{(q)}-H(\jet^{q-1}y)
\]
for all $y \in M$ sufficiently close to $\ell$, so $G(\jet^{q-1}y)<y^{(q)}<H(\jet^{q-1}y)$ for these $y$. This gives $\jet^q(y) \in \pi_{q+1}(D^M)$ for all $y \in M$ sufficiently close to $\ell$ as desired. If $\pi_{q+1}(D)$ is of the form
\[
\big\{(a,b) :a \in \pi_q(D)\text{ and } b>G(a)\big\}\ \text{ or }\ \big\{(a,b) :a \in \pi_q(D)\text{ and } b<H(a)\big\},
\]
then a simpler version of the above argument works. If $\pi_{q+1}(D) = \pi_q(D)\times K$, then the result follows immediately from the inductive assumption.
\end{proof}

\begin{corollary}
\label{cor-generalclose}
Suppose (IH) holds and let $G\colon K^r \to K$ be an $\cL(K)$-definable function. If $G(\jet^{r-1}\ell)= 0$, then $G(\jet^{r-1}y)= 0$ for all $y \in M$ sufficiently close to $\ell$. If $G(\jet^{r-1}\ell)\neq 0$, then 
\[
G(\jet^{r-1}\ell)\ \sim \ G(\jet^{r-1}y)
\]
for all $y \in K$ sufficiently close to $\ell$. 
\end{corollary}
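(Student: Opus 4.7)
The plan is to combine the immediate-extension approximation of Corollary~\ref{cor-Lflat} with the cell-containment result of Lemma~\ref{lem-incell}. By Lemma~\ref{lem-isimm}, the $\cL$-substructure $M := K\langle \jet^{r-1}\ell\rangle$ of $L$ is an immediate $\TO$-extension of $K$. I would then apply Corollary~\ref{cor-Lflat} to the $\cL(K)$-definable (hence $\LO(K)$-definable) function $G\colon K^r \to K$ and to the point $\jet^{r-1}\ell\in (K^r)^M$. This yields an $\cL(K)$-definable cell $D\subseteq K^r$ with $\jet^{r-1}\ell\in D^M$ such that either $G(u) = 0$ for all $u \in D^M$, or $G(u) \sim G(\jet^{r-1}\ell)$ for all $u \in D^M$, according to whether $G(\jet^{r-1}\ell)$ vanishes or not.

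Since $M$ is an $\cL$-substructure of $L$, we have $\jet^{r-1}\ell \in D^L$, so Lemma~\ref{lem-incell} applies to the $\cL(K)$-definable set $D$ and gives $\jet^{r-1}y \in D$ for all $y\in K$ sufficiently close to $\ell$. Because $T$ is model complete, $K$ is an elementary $\cL$-substructure of $M$, so the inclusion $D\subseteq D^M$ holds automatically. Combining these observations, for every $y\in K$ sufficiently close to $\ell$ we have $\jet^{r-1}y \in D \subseteq D^M$, hence $G(\jet^{r-1}y)=0$ in the first case and $G(\jet^{r-1}y)\sim G(\jet^{r-1}\ell)$ in the second, which is exactly the statement to be proved.

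No real obstacle is expected: all the substantive work has already been done in Corollary~\ref{cor-Lflat}, Lemma~\ref{lem-isimm}, and Lemma~\ref{lem-incell}. The only points to keep straight are the bookkeeping of the various ambient structures, namely that $D^M\subseteq D^L$ (from $M\subseteq L$ and model completeness) and that $D\subseteq D^M$ (from $K\preceq M$). Both are routine consequences of model completeness of $T$.
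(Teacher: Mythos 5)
Your proof is correct and takes essentially the same approach as the paper: both rely on Lemma~\ref{lem-isimm} to get that $K\langle\jet^{r-1}\ell\rangle$ is an immediate extension and then on Lemma~\ref{lem-incell} to pass from ``$\jet^{r-1}\ell$ lies in a definable set'' to ``$\jet^{r-1}y$ lies in that set for $y$ close to $\ell$.'' The only difference is a small redundancy on your end: the paper applies Lemma~\ref{lem-incell} directly to the $\LO(K)$-definable sets $\{G = 0\}$ or $\{G \sim g\}$ (Lemma~\ref{lem-incell} already allows $\LO(K)$-definable input and internally performs the cell reduction via Lemma~\ref{lem-Lapprox}), whereas you pre-process through Corollary~\ref{cor-Lflat} to produce an $\cL(K)$-definable cell first; both routes are valid and amount to the same use of Lemma~\ref{lem-Lapprox}.
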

\begin{proof}
If $G(\jet^{r-1}\ell) = 0$, then apply Lemma~\ref{lem-incell} to the $\cL(K)$-definable set
\[
\big\{a \in K^r: G(a)=0\big\}.
\]
If $G(\jet^{r-1}\ell) \neq 0$, then since $K\langle \jet^{r-1}\ell\rangle$ is an immediate $\TO$-extension of $K$ by Lemma~\ref{lem-isimm}, we may take $g \in K^\times$ with $G(\jet^{r-1}\ell)\sim g$. Now apply Lemma~\ref{lem-incell} to the $\LO(K)$-definable set
\[
\big\{a \in K^r: G(a) \sim g\big\}.\qedhere
\]
\end{proof}

\noindent
We are now ready to prove Proposition~\ref{prop-imsim}.

\begin{proof}[Proof of Proposition~\ref{prop-imsim}]
Suppose Proposition~\ref{prop-imsim} holds with $q$ in place of $r$ for all $q<r$ (this is vacuous if $r = 0$). Then (IH) holds, since we are assuming that $Z_q(K,\ell) = \emptyset$ for all $q<r$. Let $F\colon K^{1+r} \to K$ be as in the statement of the proposition. Since $F \not\in Z_r(K,\ell)$, we may take $a \in K$ and $d \in K^\times$ with $\ell-a\prec d$ such that $F_{+a,\times d}$ is not eventually small near $\big(K,d\inv(\ell-a)\big)$. Set $e \coloneqq d\inv(\ell-a)\prec 1$ and take $\phi$ with $v\phi \in \Gamma(\der)$ such that $F_{+a,\times d}^\phi$ is not small near $(K^\phi,e)$. Set $\derdelta \coloneqq \phi\inv\der$ and set
\[
\fm\ \coloneqq \ \fm_{F_{+a,\times d}^\phi},\qquad G\ \coloneqq \ \big(I_{F_{+a,\times d}^\phi}\big)_{-d\inv a,\times d\inv}^{\phi\inv}.
\]
We have
\[
F(\jet^r\ell)\ =\ F_{+a,\times d}^\phi(\deltajet^re)\ =\ \fm_{F_{+a,\times d}^\phi}\big(\derdelta^re- I_{F_{+a,\times d}^\phi}(\deltajet^{r-1}e)\big)\ =\ \fm\big(\derdelta^re- G(\jet^{r-1}\ell)\big).
\]
Using Corollary~\ref{cor-generalclose}, we take $\eta \in v(\ell-K)$ such that for $y \in M$, if $y$ is $\eta$-close to $\ell$, then either
\[
G(\jet^{r-1}y)\ =\ G(\jet^{r-1}\ell)\ =\ 0\ \text{ or }\ G(\jet^{r-1}y)\ \sim\ G(\jet^{r-1}\ell)\ \neq\ 0.
\]
Then $\eta- vd \in v(e-K)$ and, since $e \prec 1$, we may increase $\eta$ and arrange $\eta-vd>0$. Since $F_{+a,\times d}^\phi$ is not small near $(K^\phi,e)$, we may take $z_0 \in K$ with
 \[
v(e-z_0)\ >\ \eta-vd,\qquad I_{F_{+a,\times d}^\phi}(\deltajet^{r-1}z_0)\ \succeq\ 1.
\]
Then $v\big(\ell-(dz_0+a)\big)>\eta$, so we have
\[
G(\jet^{r-1}\ell)\ \sim \ G\big(\jet^{r-1}(dz_0+a)\big)\ =\ I_{F_{+a,\times d}^\phi}(\deltajet^{r-1}z_0)\ \succeq\ 1.
\]
Since $\derdelta$ is small, $L$ strictly extends $K$, and $e \prec 1$, we have $\derdelta^re \prec 1$, so 
\[
F(\jet^r\ell)\ =\ \fm\big(\derdelta^re- G(\jet^{r-1}\ell)\big) \ \sim \ -\fm G(\jet^{r-1}\ell) \ \neq\ 0.
\]
Now, let $y \in M$ and suppose that $y$ is $\eta$-close to $K$. Set $z \coloneqq d\inv(y-a)$, so
\[
F(\jet^ry)\ =\ F_{+a,\times d}^\phi(\deltajet^rz)\ =\ \fm_{F_{+a,\times d}^\phi}\big(\derdelta^rz- I_{F_{+a,\times d}^\phi}(\deltajet^{r-1}z)\big)\ =\ \fm\big(\derdelta^rz- G(\jet^{r-1}y)\big).
\]
Then $z$ is $(\eta-vd)$-close to $e\prec 1$, so $z \prec 1$ and $\derdelta^rz \prec 1$. Since $y$ is $\eta$-close to $\ell$, we also have $G(\jet^{r-1}y) \sim G(\jet^{r-1}\ell) \succeq 1$. Thus,
\[
F(\jet^ry)\ =\ \fm\big(\derdelta^rz- G(\jet^{r-1}y)\big)\ \sim -\fm G(\jet^{r-1}y)\ \sim\ -\fm G(\jet^{r-1}\ell)\ \sim\ F(\jet^r\ell).\qedhere
\]
\end{proof}

\section{Constructing immediate extensions when $S(\der) = \{0\}$}\label{sec-trivial}
\noindent
As in the previous section, let $\ell$ be an element in a strict $\TdO$-extension $L$ of $K$ and suppose $v(\ell- K)$ has no largest element.

\begin{proposition}
\label{prop-zerosetisempty}
Suppose $Z(K,\ell) = \emptyset$. Then $K\langle \jet^\infty\ell \rangle$ is an immediate strict $\TdO$-extension of $K$. Let $b$ be an element in a strict $\TdO$-extension $M$ of $K$ with $v(b-y) = v(\ell-y)$ for each $y \in K$. Then there is a unique $\LdO(K)$-embedding $K\langle \jet^\infty\ell\rangle\to M$ sending $\ell$ to $b$.
\end{proposition}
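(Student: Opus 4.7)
The plan splits into three claims: (A) $N := K\langle \jet^\infty\ell \rangle$ is an immediate $\TO$-extension of $K$; (B) equipped with the restriction of $\der_L$, $N$ is a strict $\TdO$-extension of $K$; and (C) the embedding $N \to M$ exists and is unique. For (A), the hypothesis $Z(K,\ell) = \emptyset$ gives $Z_q(K,\ell) = \emptyset$ for every $q$, so the induction hypothesis (IH) of Lemma~\ref{lem-isimm} holds for every $r$; that lemma then gives $K\langle \jet^{r-1}\ell\rangle$ immediate over $K$, and the union yields (A).

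For (B), $\der_L$ restricts to a $T$-derivation on $N$: any $a = H(\jet^r\ell) \in N$ has $\der_L a$ expressible via the chain rule as an $\cL(K)$-definable function of $\jet^{r+1}\ell$ and parameters, hence in $N$. Continuity of the restriction is inherited from $L$. Strictness follows from Fact~\ref{fact-onlychecksmall}: if $\der\smallo \subseteq \phi\smallo$ with $\phi \in K^\times$, then strictness of $L$ over $K$ gives $\der_L\smallo_L \subseteq \phi\smallo_L$, and intersecting with $N$ (using $\phi \in K$) yields $\der_N\smallo_N \subseteq \phi\smallo_N$.

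For (C), I build inductively a coherent family of $\LO(K)$-embeddings $\iota_r : K\langle \jet^r\ell\rangle \to M$ with $\iota_r(\ell^{(i)}) = b^{(i)}$ for $i \leq r$. The base case $r = 0$ is immediate from Corollary~\ref{cor-immediateext1}, using (A) to know $K\langle \ell\rangle$ is immediate over $K$. For the inductive step, identify $K\langle \jet^r\ell\rangle$ with its image in $M$ via $\iota_r$; since $K\langle \jet^{r+1}\ell\rangle$ is a simple immediate $\TO$-extension of $K\langle \jet^r\ell\rangle$ by (A), Corollary~\ref{cor-immediateext1} produces $\iota_{r+1}$ once one verifies
\[
v(\ell^{(r+1)} - y)\ =\ v(b^{(r+1)} - \iota_r(y)) \qquad\text{for every } y \in K\langle \jet^r\ell\rangle.
\]
Establishing this identity of valuations is the main obstacle. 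To handle it, write $y = G(\jet^r\ell)$ with $G$ an $\cL(K)$-definable function and consider the $(r+2)$-ary $\cL(K)$-definable function $F := Y_{r+1} - G(Y_0,\ldots,Y_r)$, which is in implicit form with $\fm_F = 1$. Proposition~\ref{prop-imsim} applied to $(K,\ell)$ gives $F(\jet^{r+1}\ell) \sim F(\jet^{r+1}y')$ for all $y' \in K$ sufficiently close to $\ell$. The base embedding $\iota_0$ shows that $\ell$ and $b$ realize the same $\LO(K)$-type, so $Z(K,b) = Z(K,\ell) = \emptyset$, and Proposition~\ref{prop-imsim} applied to $(K,b)$ gives $F(\jet^{r+1}b) \sim F(\jet^{r+1}y')$ for $y'$ close to $b$. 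The hypothesis $v(\ell - y') = v(b - y')$ for $y' \in K$ makes ``close to $\ell$'' and ``close to $b$'' equivalent, so transitivity of $\sim$ forces $F(\jet^{r+1}\ell) \sim F(\jet^{r+1}b)$, yielding the equality of valuations. The union $\iota := \bigcup_r \iota_r$ is then an $\LO(K)$-embedding $N \to M$ sending each $\ell^{(r)}$ to $b^{(r)}$; the chain rule then forces $\iota$ to commute with $\der$, making it an $\LdO(K)$-embedding. Uniqueness at each stage in Corollary~\ref{cor-immediateext1} yields overall uniqueness.
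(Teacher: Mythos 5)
Your proposal is correct and follows essentially the same approach as the paper: establish via Lemma~\ref{lem-isimm} that $K\langle \jet^\infty\ell \rangle$ is an immediate $\TO$-extension, deduce strictness from strictness of $L$, then build the embedding to $M$ inductively one derivative at a time using Proposition~\ref{prop-imsim} applied to both $\ell$ and $b$ (with the same $\cL(K)$-definable function in implicit form) together with Corollary~\ref{cor-immediateext1}. The only cosmetic differences are the slight shift in indexing and the fact that you make explicit a few points the paper leaves tacit (closure of $K\langle \jet^\infty\ell\rangle$ under $\der_L$, strictness via Fact~\ref{fact-onlychecksmall}); the paper instead cites Fact~\ref{fact-transext} to upgrade the $\LO(K)$-embedding to an $\LdO(K)$-embedding, where you invoke the chain rule, but these amount to the same thing.
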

\begin{proof}
By Lemma~\ref{lem-isimm}, $K\langle \jet^\infty\ell \rangle$ is an increasing union of immediate $\TO$-extensions of $K$, so it is itself an immediate $\TO$-extension of $K$. It is also strict, as $L$ is strict. As for the existence of an $\LdO(K)$-embedding $K\langle \jet^\infty\ell\rangle\to M$, we proceed by induction. Let $r\geq 0$ and suppose we have an $\LO(K)$-embedding $\imath\colon K\langle \jet^{r-1}\ell\rangle\to M$ which sends the tuple $\jet^{r-1}(\ell)$ to $ \jet^{r-1}(b)$ (this holds vacuously when $r = 0$). Let $G\colon K^r\to K$ be an $\cL(K)$-definable function. Since $b$ is $\eta$-close to $\ell$ for all $\eta \in v(\ell-K)$, Proposition~\ref{prop-imsim} gives
\[
\ell^{(r)}- G(\jet^{r-1}\ell)\ \sim \ b^{(r)}- G(\jet^{r-1}b).
\]
As $G$ is arbitrary and $\imath\big(G(\jet^{r-1}\ell)\big) = G(\jet^{r-1}b)$, 
we may apply Corollary~\ref{cor-immediateext1} with $K\langle \jet^{r-1}\ell\rangle$, $\ell^{(r)}$, and $b^{(r)}$ in place of $K$, $\ell$, and $b$ to extend $\imath$ to an $\LO(K)$-embedding $K\langle \jet^r\ell\rangle\to M$ sending $\ell^{(r)}$ to $ b^{(r)}$. The union of these embeddings is an $\LO(K)$-embedding $K\langle \jet^\infty\ell\rangle\to M$ which sends $\jet^\infty(\ell)$ to $\jet^\infty(b)$. This is even an $\LdO(K)$-embedding by Fact~\ref{fact-transext}. As an $\LdO(K)$-embedding, it is uniquely determined by the condition that $\ell$ be sent to $b$.
\end{proof}

\begin{proposition}
\label{prop-zerosetisnonempty}
Suppose $S(\der) = \{0\}$, let $F \in Z_{r+1}(K,\ell)$, and suppose that $Z_q(K,\ell) = \emptyset$ for all $q \leq r$. Then $K$ has an immediate strict $\TdO$-extension $K\langle \jet^ra\rangle$ with $F(\jet^{r+1}a) = 0$ and $v(a-y) = v(\ell-y)$ for each $y \in K$. Let $b$ be an element in a strict $\TdO$-extension $M$ of $K$ with $F(\jet^{r+1}b) = 0$ and $v(b-y) = v(\ell-y)$ for each $y \in K$. Then there is a unique $\LdO(K)$-embedding $K\langle \jet^ra\rangle\to M$ sending $a$ to $b$.
\end{proposition}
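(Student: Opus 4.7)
The plan is to abstractly construct $M = K\langle\jet^r a\rangle$ as a rank-$(r+1)$ $\cL(K)$-extension mirroring $N := K\langle\jet^r\ell\rangle$, then impose a $T$-derivation on $M$ by forcing $a^{(r+1)} = I_F(\jet^r a)$, and finally verify immediacy and strictness. Under the hypothesis $Z_q(K,\ell) = \emptyset$ for $q \leq r$, the induction built into the proof of Proposition~\ref{prop-imsim} (applied for all $q\leq r$) shows that the tuple $\jet^r(\ell)$ is $\cL(K)$-independent in $L$, and iterating Lemma~\ref{lem-isimm} shows that $N$ is an immediate $\TO$-extension of $K$. Pick a tuple $(a_0,\ldots,a_r)$ realizing $\tp_\cL(\jet^r\ell/K)$ in some $\cL$-extension, set $M := K\langle a_0,\ldots,a_r\rangle$, and transport the valuation on $N$ to $M$ via the resulting $\cL(K)$-isomorphism $\pi\colon M \to N$ sending $a_i \mapsto \ell^{(i)}$; this makes $M$ an immediate $\TO$-extension of $K$ with $v(a_0 - y) = v(\ell - y)$ for each $y\in K$. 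By Fact~\ref{fact-transext}, extend $\der$ to a $T$-derivation on $M$ by specifying $a_i' = a_{i+1}$ for $i < r$ and $a_r' = I_F(a_0,\ldots,a_r)$. Setting $a := a_0$ yields $a^{(i)} = a_i$ for $i\leq r$ and $a^{(r+1)} = I_F(\jet^r a)$, hence $F(\jet^{r+1}a) = 0$.

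The main technical obstacle is showing that $M$ is a \emph{strict} $\TdO$-extension of $K$. By Fact~\ref{fact-onlychecksmall}, since $M$ is already an immediate $\TO$-extension, it is enough to verify $\der_M\smallo_M \subseteq \phi\smallo_M$ for each $\phi\in K^\times$ with $\der\smallo\subseteq\phi\smallo$. Fix such $\phi$. The set $Z_{r+1}(K,\ell)$ is invariant under compositional conjugation (by Lemmas~\ref{lem-prod} and~\ref{lem-compconj} together with the invariance of eventual smallness noted in Subsection~\ref{subsec-evensmall}), so replacing $(K,\der,F)$ by $(K^\phi,\phi\inv\der,F^\phi)$ reduces us to the case $\phi = 1$: $K$ has small derivation and we must show the same for $M$. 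By eventual smallness of $F$ and a further conjugation by some $\psi$ with $v\psi\in\Gamma(\der)$, we may assume $F$ itself is small near $(K,\ell)$; additive and multiplicative conjugation via Lemma~\ref{lem-addmultconj} lets us also arrange $\ell \prec 1$. For an arbitrary $\xi = G(\jet^r a)\in\smallo_M$, the new derivation on $M$ computes as $\der_M\xi = H(\jet^r a)$, where
\[
H(Y_0,\ldots,Y_r)\ :=\ G^{[\der]}(Y) + \sum_{i<r}\partial_i G(Y)\,Y_{i+1} + \partial_rG(Y)\,I_F(Y).
\]
For $y\in K$ sufficiently close to $\ell$, Corollary~\ref{cor-generalclose} applied to $G$ gives $G(\jet^r y)\sim\xi \prec 1$, so $\der G(\jet^r y) \prec 1$ by small derivation of $K$; and $H(\jet^r y) - \der G(\jet^r y) = \partial_rG(\jet^r y)\bigl(I_F(\jet^r y) - y^{(r+1)}\bigr)$, which is $\prec 1$ because $y,y',\ldots,y^{(r+1)}\prec 1$ (iterated small derivation from $\ell\prec 1$) and $I_F(\jet^r y)\prec 1$. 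So $H(\jet^r y) \prec 1$ for $y\in K$ near $\ell$, and Corollary~\ref{cor-generalclose} applied to $H$---legitimate because $Z_q(K,\ell) = \emptyset$ for all $q\leq r$---transfers this bound to $H(\jet^r a) = \pi\inv(H(\jet^r\ell)) \prec 1$, giving $\der_M\smallo_M \subseteq \smallo_M$ as required.

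For the uniqueness clause, let $b$ in a strict $\TdO$-extension $M^*$ satisfy $F(\jet^{r+1}b) = 0$ and $v(b-y) = v(\ell-y)$ for all $y\in K$. Since $a$, $b$, and $\ell$ realize the same cut over $K$, we have $Z(K,a) = Z(K,b) = Z(K,\ell)$. Iterate Corollary~\ref{cor-immediateext1} through the tower $K\subseteq K\langle a\rangle\subseteq\cdots\subseteq K\langle\jet^r a\rangle$: at the $q$-th step ($0\leq q\leq r$), given an $\LO(K)$-embedding $K\langle\jet^{q-1}a\rangle\to M^*$ sending $a^{(i)}\mapsto b^{(i)}$ for $i<q$, Proposition~\ref{prop-imsim} applied to the implicit-form functions $Y_q - G(\jet^{q-1}Y)$ (with $G$ ranging over $\cL(K)$-definable functions of arity $q$) verifies the hypothesis $v(a^{(q)} - z) = v(b^{(q)} - z)$ for all $z\in K\langle\jet^{q-1}a\rangle$, letting Corollary~\ref{cor-immediateext1} extend the embedding to $K\langle\jet^q a\rangle$ by sending $a^{(q)}\mapsto b^{(q)}$. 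The resulting $\LO(K)$-embedding $K\langle\jet^r a\rangle\to M^*$ is an $\LdO(K)$-embedding by Fact~\ref{fact-transext}: the derivation on $K\langle\jet^r a\rangle$ is determined by its values on the $\dclL$-basis $\{a,a',\ldots,a^{(r)}\}$, and the embedding sends $(a^{(r)})' = I_F(\jet^r a)$ to $I_F(\jet^r b) = b^{(r+1)} = (b^{(r)})'$, where the middle equality uses $F(\jet^{r+1}b) = 0$. Uniqueness of the $\LdO(K)$-embedding is built into the iteration.
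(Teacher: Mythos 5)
The construction of $K\langle\jet^r a\rangle$, the reduction to a single function $H$, the use of Fact~\ref{fact-onlychecksmall}, and the uniqueness clause all match the paper's proof. The genuine gap is in your verification that $H(\jet^r y)\prec 1$ for $y\in K$ near $\ell$: you write that
\[
H(\jet^r y) - \der G(\jet^r y)\ =\ \partial_r G(\jet^r y)\bigl(I_F(\jet^r y) - y^{(r+1)}\bigr)\ \prec\ 1
\]
\emph{because} $I_F(\jet^r y)\prec 1$ and $y^{(r+1)}\prec 1$. But that only controls the second factor; $\partial_r G(\jet^r y)$ is an arbitrary $\cL(K)$-definable quantity and can perfectly well satisfy $\partial_r G(\jet^r y)\succ 1$ even when $G(\jet^r y)\prec 1$ (e.g.\ $G = Y_r^{1/2}$ near $Y_r = 0$). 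So the displayed product need not be $\prec 1$, and your argument stops one step short.

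The paper closes exactly this gap by arguing by contradiction: it assumes $H(\jet^r a)\succeq\phi$, deduces $G(\jet^r y)'\prec H(\jet^r y)$ and hence
\[
y^{(r+1)} - I_F(\jet^r y)\ \sim\ -H(\jet^r y)\bigl(\partial_r G(\jet^r y)\bigr)\inv
\]
for $y$ near $\ell$, and then uses $H(\jet^r y)\sim H(\jet^r a)$ and $\partial_r G(\jet^r y)\sim\partial_r G(\jet^r a)$ to conclude that $F(\jet^{r+1}y) \sim F(\jet^{r+1}z)$ for all $y,z$ near $\ell$; this contradicts Lemma~\ref{lem-differentclosevals}, which is precisely the tool you never invoke. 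That lemma (ultimately resting on $S(\der) = \{0\}$) is essential and cannot be replaced by the direct valuation bookkeeping in your draft. A secondary, minor remark: the ``further conjugation by some $\psi$'' is unneeded --- once you have conjugated by $\phi$ with $v\phi\in\Gamma(\der)$ and done the affine conjugation, eventual smallness of $F_{+a,\times d}$ already guarantees $F_{+a,\times d}^\phi$ is small near $(K^\phi, d^{-1}(\ell-a))$ --- and in general a second compositional conjugation by $\psi$ with $v\psi \neq 0$ changes the smallness claim you are trying to verify, so you should not introduce it.
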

\begin{proof}
Let $(a_0,\ldots,a_r)$ realize the $\LO(K)$-type of $\jet^r(\ell)$ in some $\TO$-extension of $K$. Then $K\langle a_0,\dots,a_r\rangle$ is an immediate $\TO$-extension of $K$ by Lemma~\ref{lem-isimm}. The tuple $\jet^r(\ell)$ is $\cL(K)$-independent by Proposition~\ref{prop-imsim}, so $(a_0,\ldots,a_r)$ is $\cL(K)$-independent as well. Using Fact~\ref{fact-transext}, we extend $\der$ to a $T$-derivation on $K\langle a_0,\dots,a_r\rangle$ with $a_i' = a_{i+1}$ for $i <r$ and $a_r' = I_F(a_0,\ldots,a_r)$. Set $a \coloneqq a_0$ so $K\langle a_0,\dots,a_r\rangle= K \langle \jet^ra\rangle$ and $a^{(r+1)} = I_F(\jet^r a)$. 

We need to show that $K \langle \jet^ra\rangle$ is a strict extension of $K$. Let $\phi \in K^\times$ with $v\phi \in \Gamma(\der)$ and let $G\colon K^{r+1} \to K$ be an $\cL(K)$-definable function with $G(\jet^ra) \prec 1$. By Fact~\ref{fact-onlychecksmall}, it suffices to show that $G(\jet^ra)' \prec \phi$. We assume that $G(\jet^r a) \neq 0$, and we take an $\cL(K)$-definable open set $U \subseteq K^{1+r}$ on which $G$ is $\cC^1$ and which contains $\jet^r(a)$ in its natural extension. By Fact~\ref{fact-basicTderivation2}, we have
\[
G(\jet^r a)' \ =\ G^{[\der]}(\jet^r a)+\nabla G(\jet^r a)\cdot \big(a',a'',\ldots,a^{(r)},I_F(\jet^r a)\big).
\]
Let $Y = (Y_0,\ldots,Y_r)$ and let $H\colon U\to K$ be the function
\[
H(Y)\ \coloneqq \ G^{[\der]}(Y)+\nabla G(Y)\cdot \big(Y_1,\ldots,Y_r,I_F(Y)\big),
\]
so $H(\jet^ra) = G(\jet^ra)'$. Suppose toward contradiction that $H(\jet^ra) \succeq \phi$. Since $\jet^r(\ell)$ and $\jet^r(a)$ have the same $\LO(K)$-type and $Z_r(K,\ell) = \emptyset$, Lemma~\ref{lem-incell} and Corollary~\ref{cor-generalclose} give $\eta \in v(\ell-K)$ with
\[
\jet^r(y) \ \in\ U,\qquad G(\jet^ry)\ \sim\ G(\jet^r a) \ \prec\ 1,\qquad H(\jet^ry)\ \sim\ H(\jet^ra)\ \succeq\ \phi
\]
for all $y \in K$ which are $\eta$-close to $\ell$. For the remainder of this proof, we let $y\in K$ be $\eta$-close to $\ell$. Since $G(\jet^ry) \prec 1$ and $v\phi \in \Gamma(\der)$, we have
\[
G(\jet^ry)'\ =\ G^{[\der]}(\jet^ry)+\nabla G(\jet^ry)\cdot (y',\ldots,y^{(r)},y^{(r+1)})\ \prec\ \phi\ \preceq\ H(\jet^ry).
\]
Thus
\[
G(\jet^ry)'-H(\jet^ry)\ =\ \frac{\partial G}{\partial Y_r}(\jet^ry)\big(y^{(r+1)}-I_F(\jet^ry)\big)\ \sim\ -H(\jet^ry).
\]
Since $H(\jet^ry) \neq 0$, we have $\frac{\partial G}{\partial Y_r}(\jet^ry)\neq 0$, so 
\[
y^{(r+1)}-I_F(\jet^ry)\ \sim\ -H(\jet^ry)\left(\frac{\partial G}{\partial Y_r}(\jet^ry)\right)\inv.
\]
We have $H(\jet^ry) \sim H(\jet^ra)$ and, by increasing $\eta$, we may assume $\frac{\partial G}{\partial Y_r}(\jet^ry) \sim \frac{\partial G}{\partial Y_r}(\jet^ra)$. Thus
\[
F(\jet^{r+1}y) \ =\ \fm_F\big(y^{(r+1)}-I_F(\jet^ry)\big) \sim\ -\fm_FH(\jet^ra)\left(\frac{\partial G}{\partial Y_r}(\jet^ra)\right)\inv. 
\]
In particular, $F(\jet^{r+1}y) \sim F(\jet^{r+1}z)$ for all $y,z \in K$ with $v(\ell-y),v(\ell-z)> \eta$, contradicting Lemma~\ref{lem-differentclosevals}.

Now let $M$ and $b$ be as in the statement of the proposition. Since $\jet^r(\ell)$ and $\jet^r(a)$ have the same $\LO(K)$-type and $Z_r(K,\ell) = \emptyset$, we may construct an $\LO(K)$-embedding $K\langle \jet^ra\rangle \to M$ which sends $\jet^r(a)$ to $ \jet^r(b)$ as in the proof of the previous proposition. This is even an $\LdO(K)$-embedding by Fact~\ref{fact-transext}. As an $\LdO(K)$-embedding, it is uniquely determined by the condition that $a$ be sent to $b$.
\end{proof}

\begin{theorem}
\label{thm-main1}
Suppose $S(\der) = \{0\}$. Then $K$ has a spherically complete immediate strict $\TdO$-extension.
\end{theorem}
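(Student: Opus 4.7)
The plan is to apply Zorn's lemma to the class of immediate strict $\TdO$-extensions of $K$ (truncated to a sufficiently large cardinal so as to form a set), ordered by $\LdO$-embeddability over $K$. Directed unions of immediate strict $\TdO$-extensions are again immediate strict $\TdO$-extensions, since both immediacy and the two strictness conditions are inherited from the members of the chain. Zorn then produces a maximal element $M$, and it suffices to prove that $M$ is spherically complete. Note $S_M(\der) = S_K(\der) = \{0\}$, by the compositional/strict invariance remark in Section~\ref{sec-TdO}, since $M$ is an immediate strict extension of $K$ with $\Gamma_M = \Gamma$.

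Suppose towards a contradiction that $M$ is not spherically complete, and fix a nested collection $\cB$ of closed $v$-balls in $M$ with $\bigcap \cB = \emptyset$. To get access to the propositions of Section~\ref{sec-trivial}, I would first embed $M$ into a sufficiently saturated elementary $\LdO$-extension $\overline{M}$. Such $\overline{M}$ is automatically a strict $\TdO$-extension of $M$ (elementary extensions are strict), and by saturation the consistent partial type $\{y \in B : B \in \cB\}$ is realized; fix $\ell \in \bigcap \cB^{\overline{M}}$. Because $\bigcap \cB = \emptyset$ in $M$, Lemma~\ref{lem-basicvball}(2) tells us that $v(\ell-M)$ has no largest element, and in particular $\ell \notin M$.

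Now I would branch on whether $Z(M,\ell)$ is empty. If $Z(M,\ell) = \emptyset$, then Proposition~\ref{prop-zerosetisempty} (applied with $M$ in the role of $K$ and $\overline{M}$ in the role of $L$) shows that $M\langle \jet^\infty \ell\rangle \subseteq \overline{M}$ is an immediate strict $\TdO$-extension of $M$, proper since $\ell \notin M$. Otherwise, since $Z_0(M,\ell) = \emptyset$ by Lemma~\ref{lem-zeroempty}, I pick the smallest $r$ with $Z_{r+1}(M,\ell) \neq \emptyset$ and some $F \in Z_{r+1}(M,\ell)$; then Proposition~\ref{prop-zerosetisnonempty}, applicable because $S_M(\der) = \{0\}$, produces an immediate strict $\TdO$-extension $M\langle \jet^r a\rangle$ of $M$ with $F(\jet^{r+1}a) = 0$ and $v(a-y) = v(\ell-y)$ for each $y \in M$, so $v(a-M)$ again has no largest element and $a \notin M$. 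In either case, transitivity of strict extensions gives an immediate strict $\TdO$-extension of $K$ properly containing $M$, contradicting maximality.

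The main technical subtlety I see is arranging for $\ell$ to sit inside an honest strict $\TdO$-extension, as required to apply Proposition~\ref{prop-zerosetisempty}: Corollary~\ref{cor-immediateext3} on its own only produces $\ell$ in an immediate $\TO$-extension of $M$, with no derivation specified. Passing to a saturated elementary extension is the cleanest way to supply the ambient $L$ without circularity. With that hurdle dispatched, the two propositions of Section~\ref{sec-trivial} absorb the rest of the work, and the Zorn step is routine.
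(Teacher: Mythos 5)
Your proof is correct and takes essentially the same route as the paper's: reduce to showing a proper immediate strict $\TdO$-extension exists for any non--spherically-complete $K$ with $S(\der)=\{0\}$ (the Zorn step and the preservation of $S(\der)=\{0\}$ being left implicit in the paper), realize the nested collection of balls by an element $\ell$ in an elementary $\TdO$-extension, and then apply Proposition~\ref{prop-zerosetisempty} or Proposition~\ref{prop-zerosetisnonempty} depending on whether $Z(K,\ell)$ is empty. The only differences are cosmetic elaborations --- making the Zorn argument and the choice of ambient strict extension explicit via saturation rather than compactness.
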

\begin{proof}
We may assume that $K$ is not itself spherically complete. It suffices to show that $K$ has a proper immediate strict $\TdO$-extension, as the property $S(\der) = \{0\}$ is preserved by immediate strict extensions. Let $\cB$ be a nested collection of closed $v$-balls in $K$ with empty intersection in $K$ and let $\ell$ be an element in an elementary $\TdO$-extension of $L$ of $K$ with $\ell \in \bigcap \cB^L$. Then $v(\ell-K)$ has no largest element by Lemma~\ref{lem-basicvball}. If $Z(K,\ell)=\emptyset$, then $K\langle \jet^\infty\ell \rangle$ is a proper immediate strict $\TdO$-extension of $K$ by Proposition~\ref{prop-zerosetisempty}. Suppose $Z(K,\ell)\neq\emptyset$. Lemma~\ref{lem-zeroempty} gives $Z_0(K,\ell) = \emptyset$, so take $r$ maximal such that $Z_q(K,\ell) = \emptyset$ for all $q\leq r$. Then Proposition~\ref{prop-zerosetisnonempty} provides a proper immediate strict $\TdO$-extension $K\langle\jet^r a\rangle$ of $K$ where $a$ is in the natural extension of each $B \in \cB$.
\end{proof}

\noindent
Before moving on, let us consider a couple of cases that can be handled by Theorem~\ref{thm-main1}. Suppose that $K$ has small derivation and that the induced derivation on $\res(K)$ is nontrivial. Then $\Gamma(\der) = \Gamma^{\leq}$, so $S(\der) = \{0\}$; see~\cite[Corollary 1.7 and Lemma 1.15]{ADH18B}. Given a $\TdO$-extension $M$ of $K$, it follows from Fact~\ref{fact-bigtobig} that $M$ is a strict extension of $K$ if and only if $M$ has small derivation. Thus, we have the following:

\begin{corollary}
\label{cor-nontrivres}
If $K$ has small derivation and the induced derivation on $\res(K)$ is nontrivial, then $K$ has a spherically complete immediate $\TdO$-extension with small derivation.
\end{corollary}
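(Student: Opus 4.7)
The plan is to deduce this corollary directly from Theorem~\ref{thm-main1}, using the observation already made in the paragraph preceding the statement. First I would verify that the hypotheses force $S(\der) = \{0\}$. Since $K$ has small derivation we have $v(\der\smallo) \geq 0$, hence $\Gamma^{<} \subseteq \Gamma(\der)$, and Lemma~\ref{lem-supisactive} together with $0 \in \Gamma(\der)$ (because $\der\smallo \subseteq \smallo$) gives $\Gamma(\der) \supseteq \Gamma^{\leq}$. The nontriviality of the induced derivation on $\res(K)$ provides some $a \in \cO$ with $a'\asymp 1$, which rules out $\Gamma^{>} \subseteq \Gamma(\der)$; hence $\Gamma(\der) = \Gamma^{\leq}$, and then the stabilizer of this downward closed cut is $\{0\}$, as recorded in the reference to~\cite{ADH18B}.

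With $S(\der) = \{0\}$ established, Theorem~\ref{thm-main1} supplies a spherically complete immediate strict $\TdO$-extension $M$ of $K$. The only remaining task is to check that $M$ has small derivation. Since $\der\smallo \subseteq \smallo = 1\cdot\smallo$ in $K$, strictness of $M$ (applied with $\phi = 1 \in K^\times$) yields $\der_M\smallo_M \subseteq 1\cdot\smallo_M = \smallo_M$, which is exactly the statement that $M$ has small derivation. Thus $M$ is the desired spherically complete immediate $\TdO$-extension with small derivation.

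There is essentially no obstacle here beyond packaging: the real work was already done in Theorem~\ref{thm-main1} (which in turn relied on Propositions~\ref{prop-zerosetisempty} and~\ref{prop-zerosetisnonempty}), and Fact~\ref{fact-bigtobig} together with the definition of a strict extension reduces ``small derivation is preserved'' to the single instance $\phi = 1$ of the strictness condition. The only subtlety worth flagging is the need to record that $S(\der) = \{0\}$ under the stated hypotheses, which is why I would open the proof by citing that computation rather than silently invoking Theorem~\ref{thm-main1}.
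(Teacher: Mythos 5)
Your proposal takes essentially the same route as the paper: verify $S(\der)=\{0\}$, invoke Theorem~\ref{thm-main1}, and observe that a strict extension of a field with small derivation again has small derivation (strictness with $\phi=1$). The paper simply points to \cite{ADH18B} for the identity $\Gamma(\der)=\Gamma^{\leq}$ and for $S(\der)=\{0\}$, and records in the preceding paragraph the equivalence ``strict iff small derivation,'' where you only unpack the one direction you need. The substance is identical.

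Two small infelicities in the write-up, neither of which affects the conclusion. First, the step from $0\in\Gamma(\der)$ to $\Gamma^{\leq}\subseteq\Gamma(\der)$ needs no appeal to Lemma~\ref{lem-supisactive}: $\Gamma(\der)$ is downward closed by definition, so containing $0$ immediately gives $\Gamma^{\leq}\subseteq\Gamma(\der)$. Second, ``rules out $\Gamma^{>}\subseteq\Gamma(\der)$'' is weaker than what your $a\in\cO$ with $a'\asymp 1$ actually buys and than what the conclusion $\Gamma(\der)=\Gamma^{\leq}$ requires: you need that $\Gamma(\der)$ contains \emph{no} positive element. Fortunately that is what the argument delivers, but it does pass through Fact~\ref{fact-bigtobig} (since $a$ need not lie in $\smallo$, you must pass from $\der\smallo\subseteq\phi\smallo$ to $\der\cO\subseteq\phi\cO$ in order to draw a contradiction from $v(a')=0$); worth naming that fact at that point, since it is the place where it is genuinely used, rather than in the final paragraph where $\phi=1$ alone suffices.
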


\noindent
Another important case where $S(\der) = \{0\}$ is when $K$ is asymptotic. Recall from~\cite{ADH17} that $K$ is said to be \textbf{asymptotic} if for all $f,g \in K^\times$ with $f,g\prec 1$, we have $f \prec g \Longleftrightarrow f' \prec g'$. If $K$ is asymptotic, then $S(\der) = \{0\}$ by~\cite[Lemma 1.14]{ADH18B}, so $K$ has a spherically complete immediate strict $\TdO$-extension. Moreover, if $K$ is asymptotic and $M$ is an immediate $\TdO$-extension of $K$, then $M$ is a strict $\TdO$-extension of $K$ if and only if $M$ is itself asymptotic by~\cite[Lemmas 1.11 and 1.12]{ADH18B}. We summarize below:

\begin{corollary}
\label{cor-asymp}
If $K$ is asymptotic, then $K$ has a spherically complete immediate asymptotic $\TdO$-extension.
\end{corollary}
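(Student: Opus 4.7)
The plan is to derive Corollary~\ref{cor-asymp} directly from Theorem~\ref{thm-main1} together with the two cited results from~\cite{ADH18B}. First I would invoke~\cite[Lemma 1.14]{ADH18B} to observe that since $K$ is asymptotic, we have $S(\der)=\{0\}$. This is exactly the hypothesis of Theorem~\ref{thm-main1}, so $K$ admits a spherically complete immediate strict $\TdO$-extension $M$.

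The remaining issue is to verify that $M$ is itself asymptotic rather than merely strict. For this, I would apply~\cite[Lemmas 1.11 and 1.12]{ADH18B}, which together say that when $K$ is asymptotic and $M$ is an immediate $\TdO$-extension of $K$, strictness of $M/K$ is equivalent to $M$ being asymptotic. Since our $M$ was produced as a strict immediate extension, it is then asymptotic, as required.

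The main (and essentially only) obstacle is conceptual rather than technical: one has to recognize that in the asymptotic setting, ``strict immediate'' and ``asymptotic immediate'' coincide, so that the strict extension produced by Theorem~\ref{thm-main1} automatically lies in the correct category. Once this equivalence is invoked, the proof is a one-line combination and no further construction or calculation is needed.
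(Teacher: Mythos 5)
Your proposal is correct and follows exactly the paper's own route: use \cite[Lemma 1.14]{ADH18B} to deduce $S(\der)=\{0\}$, invoke Theorem~\ref{thm-main1} to get a spherically complete immediate strict $\TdO$-extension, and then use \cite[Lemmas 1.11 and 1.12]{ADH18B} to convert ``strict'' into ``asymptotic'' for immediate extensions of an asymptotic $K$. There is nothing to add.
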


\section{Coarsening by $S(\der)$}\label{sec-coarsening}
\noindent
In this section, we prove our main theorem. First, we establish some results on residue field extensions.

\begin{lemma}
\label{lem-smalltosmall}
Suppose $K$ has small derivation and let $L = K\langle a \rangle$ be a simple $\TdO$-extension of $K$ with $a \asymp 1$, $\bar{a} \not\in \res(K)$, and $a' \preceq 1$. Then $L$ has small derivation. Moreover, if $\der \cO \subseteq \smallo$ and $a' \prec 1$, then $\der_L \cO_L \subseteq \smallo_L$.
\end{lemma}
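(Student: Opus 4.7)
The plan is to build an induced $T$-derivation $\partial$ on $\res L$ that commutes with the residue map, and deduce small derivation of $L$ from the resulting residue-commutation identity.

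First, $\Gamma_L=\Gamma$: for each $b\in K$ the hypotheses $a\asymp 1$ and $\bar a\not\in \res K$ force $v(a-b)\in\{0,vb\}\subseteq\Gamma$ (a quick case analysis on $b\prec 1$, $b\asymp 1$, $b\succ 1$), so the valuation property forces $\Gamma_L=\Gamma$. Then~\cite[Lemma 5.1]{DL95} gives $\res L=\res K\langle\bar a\rangle$ with $\{\bar a\}$ a $\dcl_\cL$-basis over $\res K$. Since $K$ has small derivation, the induced derivation $\bar\der$ on $\res K$ is a $T$-derivation; Fact~\ref{fact-transext} then produces a unique $T$-derivation $\partial$ on $\res L$ extending $\bar\der$ with $\partial\bar a:=\overline{a'}$ (meaningful since $a'\preceq 1$).

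The key claim is that for every $x\in\cO_L$, $\der_L x\in\cO_L$ and $\overline{\der_L x}=\partial(\bar x)$. Granted this, small derivation of $L$ is immediate: if $x\in\smallo_L$ then $\bar x=0$, so $\overline{\der_L x}=\partial(0)=0$, i.e.\ $\der_L x\in\smallo_L$. For the ``moreover'' clause, $\der\cO\subseteq\smallo$ gives $\bar\der\equiv 0$ on $\res K$, and $a'\prec 1$ gives $\partial\bar a=0$; the uniqueness half of Fact~\ref{fact-transext} then forces $\partial\equiv 0$ on $\res L$, so $\overline{\der_L x}=0$ for every $x\in\cO_L$, and hence $\der_L\cO_L\subseteq\smallo_L$.

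To prove the key claim, write $x=F(a)$ for an $\cL(K)$-definable $F\colon K\to K$. Each parameter $c\in K$ of $F$ can be written as $c=u/v$ with $u,v\in\cO$ (taking $u=c$, $v=1$ if $c\in\cO$, else $u=1$, $v=1/c$), so we may assume $F(Y)=G(Y,b)$ with $G$ an $\cL(\emptyset)$-definable $\cC^1$ function and $b=(b_1,\ldots,b_n)\in\cO^n$. By Fact~\ref{fact-basicTderivation2},
\[
\der_L F(a)\ =\ F^{[\der]}(a)+F'(a)\,a',\qquad F^{[\der]}(a)\ =\ \sum_{i=1}^n \tfrac{\partial G}{\partial Y_{i+1}}(a,b)\,b_i'.
\]
$T$-convexity of $\cO_L$ applied to the $\cL(\emptyset)$-definable continuous $\partial G/\partial Y_{i+1}$ at $(a,b)\in\cO_L^{1+n}$ places each coefficient in $\cO_L$; combined with $b_i'\in\cO$ (from Fact~\ref{fact-bigtobig}), this yields $F^{[\der]}(a)\in\cO_L$. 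Lemma~\ref{lem-smallderivres} gives $F'(a)\preceq F(a)\in\cO_L$, and $a'\preceq 1$ gives $F'(a)\,a'\in\cO_L$; hence $\der_L F(a)\in\cO_L$. Reducing modulo $\smallo_L$ via~\cite[Lemma 1.13]{DL95} yields $\overline{F^{[\der]}(a)}=\bar F^{[\partial]}(\bar a)$ and $\overline{F'(a)\,a'}=\bar F'(\bar a)\,\partial\bar a$, so applying Fact~\ref{fact-basicTderivation2} to $\partial$ in $\res L$ rewrites the total as $\partial(\bar F(\bar a))=\partial(\bar x)$.

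The main obstacle is the residue-commutation identity $\overline{F^{[\der]}(a)}=\bar F^{[\partial]}(\bar a)$: the formula for $F^{[\der]}$ is presentation-dependent (it uses the chosen parameters), so one has to check that the parameter rewriting $c=u/v$ does not spoil the identity. Because $F^{[\der]}$ is intrinsic to $F$ by Fact~\ref{fact-basicTderivation2}, this reduces to tracking~\cite[Lemma 1.13]{DL95} through the chain rule for $G$ in terms of $(Y,b)$.
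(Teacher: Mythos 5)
You take a genuinely different route than the paper: instead of bounding $F^{[\der]}(a)$ via $\LO$-elementarity, you build the induced $T$-derivation $\partial$ on $\res L$ and aim for the residue-commutation identity $\overline{\der_L x}=\partial(\bar x)$. The framing is appealing, and several pieces are fine: the case analysis for $\Gamma_L=\Gamma$ (the paper uses the Wilkie inequality instead), the use of Lemma~\ref{lem-smallderivres} for the term $F'(a)a'$, and the deduction of the ``moreover'' clause from $\partial\equiv 0$. But the assertion that ``$T$-convexity of $\cO_L$ applied to the $\cL(\emptyset)$-definable continuous $\partial G/\partial Y_{i+1}$ at $(a,b)\in\cO_L^{1+n}$ places each coefficient in $\cO_L$'' is wrong, and the gap is genuine. $T$-convexity, and \cite[Lemma 1.13]{DL95}, apply to continuous $\cL(\emptyset)$-definable functions on all of $K^{1+n}$; the partial derivative $\partial G/\partial Y_{i+1}$ is defined only on the $\cC^1$-locus of $G$, and it can blow up near the boundary of that locus. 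Since the $c=u/v$ trick unavoidably produces parameters $b_i\in\smallo$ (whenever some original parameter is $\succ 1$), $\bar b_i=0$ can fall on that boundary. Concretely, take $G(Y,Z)=YZ^{1/2}$ on $\{Z>0\}$ and $b=\epsilon\in\smallo$ with $\epsilon>0$: then $F(a)=a\epsilon^{1/2}\in\smallo_L$, but
\[
\frac{\partial G}{\partial Z}(a,\epsilon)\ =\ \tfrac{1}{2}a\epsilon^{-1/2}\ \succ\ 1,
\]
so this coefficient is \emph{not} in $\cO_L$, even though the product $\frac{\partial G}{\partial Z}(a,\epsilon)\cdot\epsilon'=a\,\der(\epsilon^{1/2})\prec 1$ by small derivation on $K$. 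Only the full sum $F^{[\der]}(a)$ is controlled, not the individual summands. You also misidentify the obstacle at the end: the issue is not presentation-dependence of $F^{[\der]}$ (which, as you note, is intrinsic), but exactly this failure of term-by-term membership in $\cO_L$.

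The paper sidesteps all of this by staying inside $K$. If $F^{[\der]}(a)\succeq 1$, then the $\LO(K)$-definable set where $F\prec 1$, $F'\prec 1$, $F^{[\der]}\succeq 1$, and $F$ is $\cC^1$, contains $a$ in its $L$-extension, hence is nonempty in $K$; but for any $y\in\cO$ in it, small derivation on $K$ gives $F(y)'\prec 1$ and $F'(y)y'\prec 1$, so $F^{[\der]}(y)=F(y)'-F'(y)y'\prec 1$, a contradiction. This, together with Lemma~\ref{lem-smallderivres} for the $F'(a)a'$ term, is the entire argument --- the residue derivation $\partial$ never needs to be constructed.
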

\begin{proof}
Let $F\colon K\to K$ be an $\cL(K)$-definable function with $F(a) \prec 1$. We need to show that
\[
F(a)' \ =\ F^{[\der]}(a)+ F'(a)a'\ \prec \ 1.
\]
Since $\res(L) \neq \res(K)$ and $T$ is power bounded, we have $\Gamma_L = \Gamma$ by~\cite[Corollary 5.6]{vdD97} (the Wilkie inequality for power bounded theories). Using Lemma~\ref{lem-smallderivres}, we see that $F'(a)\preceq F(a) \prec 1$, so $F'(a)a' \prec 1$ and it remains to show that $F^{[\der]}(a) \prec 1$. By $\LO$-elementarity, it suffices to show that for any $\LO(K)$-definable set $A \subseteq \cO$ with $a \in A^L$, there is $y \in A$ with $F^{[\der]}(y) \prec 1$. Let $A$ be such a set and, by shrinking $A$ if need be, assume that $F$ is $\cC^1$ on $A$ and that $F(y) \prec 1$ for all $y \in A$. Since $F'(a)\prec 1$, we can use $\LO$-elementarity to take $y \in A$ with $F'(y) \prec 1$. Since $y' \preceq 1$ by Fact~\ref{fact-bigtobig}, we have $F'(y)y' \prec 1$. Since $F(y)' \prec 1$ as well, this gives
\[
F^{[\der]}(y) \ =\ F(y)' - F'(y)y' \ \prec \ 1.
\]
This takes care of the first part of the lemma.

For the second part, assume that $\der \cO \subseteq \smallo$ and that $a' \prec 1$. We need to show that $F(a)' \prec 1$ for each $\cL(K)$-definable function $F\colon K\to K$ with $F(a) \preceq 1$. The proof is essentially the same as the proof of the first part, but now Lemma~\ref{lem-smallderivres} only gives that $F'(a)\preceq 1$. We make up for this by using our assumption that $\der \cO \subseteq \smallo$ and that $a' \prec 1$.
\end{proof}

\noindent
The following corollary serves as an analog of~\cite[Corollary 6.7]{ADH18B}.

\begin{corollary}
\label{cor-resext}
Suppose $\der\cO\subseteq \smallo$ and let $E$ be a $T$-extension of $\res(K)$. Then there is a strict $\TdO$-extension $L$ of $K$ such that $\Gamma_L= \Gamma$, the derivation on $\res(L)$ is trivial, and $\res(L)$ is $\cL(\res K)$-isomorphic to $E$.
\end{corollary}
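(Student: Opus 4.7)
The plan is to construct $L$ by transfinite recursion, adjoining lifts $a_\beta$ of elements $b_\beta$ in a $\dclL$-basis for $E$ over $\res(K)$, each declared to be a $\der$-constant. The key ingredient is Lemma~\ref{lem-smalltosmall}: its \emph{Moreover} clause maintains $\der\cO \subseteq \smallo$ through each successor stage, and its main statement, combined with compositional conjugation, yields the strictness verification. The main obstacle is strictness, since Fact~\ref{fact-onlychecksmall} only applies to immediate extensions.

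Fix a well-ordering $\{b_\beta : \beta < \lambda\}$ of a $\dclL$-basis for $E$ over $\res(K)$. I build by transfinite recursion an increasing chain $(L_\beta,\iota_\beta)_{\beta \leq \lambda}$ of $\TdO$-extensions of $K$ together with $\cL(\res(K))$-isomorphisms $\iota_\beta\colon \res(L_\beta) \to \res(K)\langle b_\gamma : \gamma < \beta\rangle$, such that $\Gamma_{L_\beta} = \Gamma$ and $\der_{L_\beta}\cO_{L_\beta}\subseteq \smallo_{L_\beta}$. Set $L_0 := K$ and $\iota_0 := \id$; at limit stages, take unions, which preserves all invariants. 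At a successor $\beta+1$, use saturation to find a simple $\TO$-extension $L_\beta\langle a_\beta\rangle$ with $a_\beta \asymp 1$ and $\bar{a}_\beta$ realizing the $\iota_\beta$-pullback of $\tp\bigl(b_\beta / \res(K)\langle b_\gamma : \gamma < \beta\rangle\bigr)$; then $\bar{a}_\beta \notin \res(L_\beta)$, so by~\cite[Lemma 5.1]{DL95}, $\res(L_\beta\langle a_\beta\rangle) = \res(L_\beta)\langle \bar{a}_\beta\rangle$, and the Wilkie inequality (\cite[Corollary 5.6]{vdD97}) forces $\Gamma_{L_\beta\langle a_\beta\rangle} = \Gamma$. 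Extend $\iota_\beta$ to $\iota_{\beta+1}$ by $\bar{a}_\beta \mapsto b_\beta$, extend $\der_{L_\beta}$ to a $T$-derivation on $L_\beta\langle a_\beta\rangle$ with $\der a_\beta := 0$ via Fact~\ref{fact-transext}, and set $L_{\beta+1} := L_\beta\langle a_\beta\rangle$ with this derivation. Since $\der_{L_\beta}\cO_{L_\beta}\subseteq \smallo_{L_\beta}$ and $\der a_\beta = 0 \prec 1$, the \emph{Moreover} clause of Lemma~\ref{lem-smalltosmall} yields $\der_{L_{\beta+1}}\cO_{L_{\beta+1}}\subseteq \smallo_{L_{\beta+1}}$, so the extended derivation is small, hence continuous. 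At stage $\lambda$, $\iota_\lambda$ is an $\cL(\res(K))$-isomorphism $\res(L_\lambda) \to E$, and the induced derivation on $\res(L_\lambda)$ is trivial.

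It remains to verify that $L := L_\lambda$ is strict over $K$. Since strictness is transitive and passes through unions, it suffices to check strictness of $L_{\beta+1}$ over $L_\beta$ at each successor. Fix $\phi \in L_\beta^\times$ with $\der_{L_\beta}\smallo_{L_\beta} \subseteq \phi\smallo_{L_\beta}$; the compositional conjugate $(L_\beta)^\phi$ then has small derivation $\derdelta := \phi^{-1}\der_{L_\beta}$. The extension $\der_{L_{\beta+1}}$ sends $a_\beta$ to $0$, so $\derdelta a_\beta = 0$ in $(L_{\beta+1})^\phi$. Applying the first part of Lemma~\ref{lem-smalltosmall} to $\bigl((L_\beta)^\phi, (L_{\beta+1})^\phi, a_\beta\bigr)$---with $a_\beta \asymp 1$, $\bar{a}_\beta \notin \res(L_\beta)$, and $\derdelta a_\beta = 0 \preceq 1$---shows that $(L_{\beta+1})^\phi$ has small derivation, i.e., $\der_{L_{\beta+1}}\smallo_{L_{\beta+1}} \subseteq \phi\smallo_{L_{\beta+1}}$. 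The same strategy, invoking the \emph{Moreover} clause of Lemma~\ref{lem-smalltosmall}, handles the second strictness condition: given $\der_{L_\beta}\cO_{L_\beta} \subseteq \phi\smallo_{L_\beta}$ (equivalently, $\derdelta\cO_{L_\beta}\subseteq \smallo_{L_\beta}$), together with $\derdelta a_\beta = 0 \prec 1$, we get $\derdelta\cO_{L_{\beta+1}} \subseteq \smallo_{L_{\beta+1}}$, i.e., $\der_{L_{\beta+1}}\cO_{L_{\beta+1}} \subseteq \phi\smallo_{L_{\beta+1}}$. This completes the proof.
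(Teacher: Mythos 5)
Your proof is correct and follows essentially the same path as the paper: the paper reduces to the simple case $E = \res(K)\langle f\rangle$, builds $L = K\langle a\rangle$ with $a' = 0$, and verifies strictness via compositional conjugation and Lemma~\ref{lem-smalltosmall}, leaving the transfinite iteration implicit where you spell it out. The only cosmetic difference is that the paper constructs the simple extension concretely via a cut and~\cite[Main Lemma 3.6]{DL95}, whereas you appeal to saturation; both are valid.
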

\begin{proof}
It suffices to consider the case $E = \res(K)\langle f\rangle$ where $f \not\in \res(K)$. Let $L = K\langle a\rangle$ be a simple $T$-extension of $K$ where $a$ realizes the cut
\[
\{y\in K: y <\cO \text{ or } y \in \cO\text{ and } \bar{y}<f\}.
\]
We expand $L$ to an $\LO$-structure by letting
\[
\cO_L\ \coloneqq \ \big\{y\in L:|y|<d\text{ for all }d \in K\text{ with } d >\cO\big\}.
\]
This expansion of $L$ is a $\TO$-extension of $K$ by~\cite[Main Lemma 3.6]{DL95}. Note that $a \in \cO_L$ and that $\res(L) = \res(K)\langle \bar{a}\rangle$ is $\cL(\res K)$-isomorphic to $E$, since $\bar{a}$ and $f$ realize the same cut in $\res(K)$. In particular, $\res(L)\neq \res(K)$, so $\Gamma_L = \Gamma$ by the Wilkie inequality. Using Fact~\ref{fact-transext}, we extend $\der$ uniquely to a $T$-derivation on $L$ with $a' = 0$. We claim that $L$ is a strict $\TdO$-extension of $K$. Let $\phi \in K^\times$ and note that $\phi\inv a' = a' = 0$. If $\der\smallo \subseteq \phi\smallo$, then Lemma~\ref{lem-smalltosmall} applied to $K^\phi$ and $L^\phi$ in place of $K$ and $L$ gives $\der_L\smallo_L\subseteq \phi\smallo_L$. Likewise, if $\der\cO \subseteq \phi\smallo$, then Lemma~\ref{lem-smalltosmall} gives $\der_L\cO_L\subseteq \phi\smallo_L$. The case $\phi = 1$ gives that the derivation on $\res(L)$ is trivial.
\end{proof}

\noindent
We are now ready to prove the main theorem:

\begin{theorem}\label{thm-main2}
Suppose $S(\der)$ is a $\Lambda$-subspace of $K$. Then $K$ has an immediate strict $\TdO$-extension which is spherically complete.
\end{theorem}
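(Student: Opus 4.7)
The plan is to reduce the theorem to Theorem~\ref{thm-main1} by coarsening with $\Delta := S(\der)$. If $\Delta = \{0\}$ the conclusion is immediate from Theorem~\ref{thm-main1}, so I assume $\Delta$ is a nontrivial convex $\Lambda$-subspace of $\Gamma$. First, by compositionally conjugating by some $\phi \in K^\times$ with $v\phi \in \Gamma(\der)$, I arrange that $K$ has small derivation; this is harmless since strictness, spherical completeness, and the set $S(\der)$ are all invariant under compositional conjugation.

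Next I form the $\Delta$-coarsening $K_\Delta$, which is again a $\TdO$-model because its valuation topology is coarser than that of $\cO$. The key observation is that passing to $K_\Delta$ collapses the stabilizer. Specifically, I claim $\Gamma_{K_\Delta}(\der) = \pi(\Gamma(\der))$, where $\pi\colon\Gamma \to \dot{\Gamma}$ is the projection. The nontrivial direction uses $\Gamma(\der) + \Delta = \Gamma(\der)$ (the defining property of $\Delta$) together with Lemma~\ref{lem-supisactive}: if $v\phi \in \Gamma(\der)$ then $v\phi + \delta \in \Gamma(\der)$ for every $\delta \in \Delta$, whence $\der\smallo \subseteq \phi\dot{\smallo}$. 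Consequently $S_{K_\Delta}(\der) = \{0\}$ in $\dot{\Gamma}$, and Theorem~\ref{thm-main1} applied to $K_\Delta$ yields a spherically complete immediate strict $\TdO$-extension $M$ of $K_\Delta$.

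To lift $M$ back to an extension $M^*$ of $K$, I use the specialization construction of Subsection~\ref{subsec-TOcoarsening}: set $\cO_{M^*} := \{a \in M : a \in \cO_M,\ \bar{a} \in \cO_{\res(K_\Delta)}\}$ and keep the derivation of $M$. Since immediacy of $M/K_\Delta$ gives $\res(M) = \res(K_\Delta)$ as $\TdO$-models, the lemmas of Subsection~\ref{subsec-TOcoarsening} supply $\Gamma_{M^*} = \Gamma$ and $\res(M^*) \cong \res(K)$, so $M^*$ is an immediate $\TdO$-extension of $K$. Strictness of $M^*$ over $K$ follows from strictness of $M$ over $K_\Delta$ via Fact~\ref{fact-onlychecksmall}, together with the correspondence $\Gamma_{K_\Delta}(\der) = \pi(\Gamma(\der))$ from the previous step.

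The main obstacle will be spherical completeness of $M^*$ itself. By Fact~\ref{fact-coarsespeccomp}, $M^*$ is spherically complete iff $M^*_\Delta = M$ and $\res(M^*_\Delta) = \res(K_\Delta)$ are both spherically complete; the former holds by construction, but the latter generally fails. I handle this via a Zorn's lemma argument: take $K^*$ a maximal immediate strict $\TdO$-extension of $K$. The coarsening construction applied with $K^*$ in place of $K$ forces $K^*_\Delta$ to be spherically complete, since otherwise one would produce a proper immediate strict $\TdO$-extension of $K^*$. A parallel argument at the residue level --- using Corollary~\ref{cor-resext} to lift residue-field extensions, together with an induction on the rank of the value group of $\res(K^*_\Delta)$ as a $\Lambda$-vector space --- forces $\res(K^*_\Delta)$ to be spherically complete as well, whence $K^*$ itself is spherically complete. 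The principal technical check is that the hypothesis that $S(\der)$ is a $\Lambda$-subspace descends to $\res(K^*_\Delta)$ along this reduction, so that the induction can proceed.
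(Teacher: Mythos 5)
Your coarsening--specialization framework is the same as the paper's, but you diverge at the step where Theorem~\ref{thm-main1} is applied, and that is exactly where the paper's argument does its real work. The paper does \emph{not} apply Theorem~\ref{thm-main1} to $K_\Delta$ directly. It first fixes a spherically complete immediate $\TO$-extension $E$ of $\res(K_\Delta)$ (Corollary~\ref{cor-immediateext}), then invokes Corollary~\ref{cor-resext} to build a strict $\TdO$-extension $L$ of $K_\Delta$ with $\Gamma_L=\dot{\Gamma}$, trivial residue derivation, and $\res(L)$ being $\cL$-isomorphic to $E$ --- and only \emph{then} applies Theorem~\ref{thm-main1}, to $L$. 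The resulting $M\supseteq L$ automatically has $\res(M)\cong E$ spherically complete, so after equipping $\res(M)$ with the pullback valuation ring and specializing, Fact~\ref{fact-coarsespeccomp} gives spherical completeness of $M^*$ directly, with no Zorn patch-up.

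Your Zorn-maximality detour (take $K^*$ maximal, show $K^*_\Delta$ and $\res(K^*_\Delta)$ are each spherically complete by contradiction) is a plausible alternative, and the half of it concerning $K^*_\Delta$ is fine. But your account of the residue step contains a spurious induction and a misplaced worry. The derivation on $\res(K^*_\Delta)$ is \emph{trivial} (this is~[ADH18B, Lemma 6.1], which the paper uses), so spherical completeness of its immediate $\TO$-extensions is governed by Corollary~\ref{cor-immediateext} alone: there is no $S(\der)$ hypothesis to ``descend,'' and no induction on the $\Lambda$-rank of the value group of $\res(K^*_\Delta)$ is needed or even meaningful here. The correct version of your residue step is a single contradiction: if $\res(K^*_\Delta)$ were not spherically complete, take a proper immediate $\TO$-extension $E$ of it, lift $E$ to a strict $\TdO$-extension $L$ of $K^*_\Delta$ via Corollary~\ref{cor-resext}, equip $\res(L)$ with the pullback of $\cO_E$, and specialize --- this yields a proper immediate strict $\TdO$-extension of $K^*$, contradicting maximality. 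To make the whole route airtight you would still need to check that chains of immediate strict $\TdO$-extensions of $K$ have strict upper bounds (so Zorn applies), and that the specializations involved are strict over $K^*$, for which the paper relies on~[ADH18B, Lemma 6.4]. Finally, once you go the Zorn route, your initial application of Theorem~\ref{thm-main1} to $K_\Delta$ and the resulting $M^*$ are unused and should be discarded.
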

\begin{proof}
If $S(\der)= \{0\}$, then we are done by Theorem~\ref{thm-main1}, so we may assume that $\Delta \coloneqq S(\der) \neq \{0\}$. We arrange by compositional conjugation that $K$ has small derivation. The assumption that $\Delta$ is a $\Lambda$-subspace of $K$ allows us to coarsen by $\Delta$, which we do, yielding the $\TdO$-model $K_\Delta$. The derivation on $\res(K_\Delta)$ is trivial by~\cite[Lemma 6.1]{ADH18B} and $S_{K_\Delta}(\der) = \{0\}$ by~\cite[Lemma 6.2]{ADH18B}. Let $E$ be a spherically complete immediate $\TO$-extension of $\res(K_\Delta)$; such an extension exists by Corollary~\ref{cor-immediateext}. Using Corollary~\ref{cor-resext}, we take a strict $\TdO$-extension $L$ of $K_\Delta$ such that $\Gamma_L =\dot{\Gamma}$, the derivation on $\res(L)$ is trivial, and $\res(L)$ is $\cL(\res K_\Delta)$-isomorphic to $E$. Then $S_L(\der) = \{0\}$ as well, and we apply Theorem~\ref{thm-main1} to $L$ to get a spherically complete immediate strict $\TdO$-extension $M$ of $L$. We have $\res(M) = \res(L)$ as $T$-models, so $\res(M)$ is $\cL(\res K_\Delta)$-isomorphic to $E$. We equip $\res(M)$ with a $T$-convex valuation ring $\cO_{\res(M)}$ so that $\res(M)$ is $\LO(\res K_\Delta)$-isomorphic to $E$; then $\res(M)$ is a spherically complete immediate $\TO$-extension of $\res(K_\Delta)$. Now let $M^*$ be the $\TdO$-model with underlying $\Td$-model $M$ and $T$-convex valuation ring
\[
\cO_{M^*}\ \coloneqq \ \{a \in \cO_M:\bar{a} \in \cO_{\res(M)}\}.
\]
Then $M^*$ is an immediate $\TdO$-extension of $K$ with $M^*_\Delta = M$; see Subsection~\ref{subsec-TOcoarsening}. By~\cite[Lemma 6.4]{ADH18B}, $M^*$ is a strict $\TdO$-extension of $K$. As $M^*_\Delta = M$ and $\res(M^*_\Delta) = \res(M)$ are both spherically complete, $M^*$ is spherically complete by Fact~\ref{fact-coarsespeccomp}. 
\end{proof}

\noindent
The diagram below catalogs the objects and maps involved in the proof of Theorem~\ref{thm-main2}. As in Subsection~\ref{subsec-TOcoarsening}, horizontal arrows are embeddings in the indicated language and downward arrows are places. Isomorphisms are labeled as such and every square commutes. 
\[
\begin{tikzcd}[row sep=large]
K_\Delta \arrow[r,"\text{strict}","\LdO" '] \arrow[d]& L\arrow[r,"\text{strict}","\LdO" '] \arrow[d]& M \arrow[d]&\hspace{-3cm}=M^*_\Delta \\
\res(K_\Delta)\arrow[r,"\cL" '] \arrow[d]\arrow[rr, bend right=20,"\LO" ']& \res(L)\arrow[r,"\sim","\cL" '] & \res(M) \arrow[r,"\sim","\LO" ']\arrow[d]& E \arrow[d]\\
\res(K) \arrow[rr,"\sim","\cL" '] &&\res(M^*)\arrow[r,"\sim","\cL" ']& \res(E) 
\end{tikzcd}
\]
Note that if $S(\der)$ is a $\Lambda$-subspace of $\Gamma$, then the same is true for any $\TdO$-model which is elementarily equivalent to $K$, as this is a property of the $\LdO$-theory of $K$. If $\Lambda$ is archimedean, then $S(\der)$ is always a $\Lambda$-subspace of $\Gamma$, so we have the following corollary:

\begin{corollary}
\label{cor-polycase}
If $T$ is polynomially bounded, then $K$ has an immediate strict $\TdO$-extension which is spherically complete.
\end{corollary}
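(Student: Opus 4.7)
The plan is to reduce this immediately to Theorem~\ref{thm-main2}. Recall from Section~\ref{sec-Tconvex} that when $T$ is power bounded with field of exponents $\Lambda$, the value group $\Gamma$ carries the structure of an ordered $\Lambda$-vector space via $\lambda v(a) := v(a^\lambda)$. Polynomial boundedness of $T$ is precisely the condition that $\Lambda$ is archimedean, so the whole task reduces to verifying that under this archimedean hypothesis, the stabilizer $S(\der)$ of $\Gamma(\der)$ is automatically a $\Lambda$-subspace of $\Gamma$.

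Recall that $S(\der)$ is by definition a convex subgroup of $\Gamma$. So the key lemma to invoke (or verify in passing) is the general fact that every convex subgroup of an ordered $\Lambda$-vector space over an archimedean ordered field $\Lambda$ is itself a $\Lambda$-subspace. The argument is one line: given $\gamma \in S(\der)$ and $\lambda \in \Lambda$, archimedeanness of $\Lambda$ provides $n \in \N$ with $|\lambda| \leq n$, hence $|\lambda\gamma| \leq n|\gamma|$ in $\Gamma$; since $S(\der)$ is a convex subgroup it contains $n|\gamma|$ and therefore $\lambda\gamma$.

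With $S(\der)$ established as a $\Lambda$-subspace of $\Gamma$, Theorem~\ref{thm-main2} applies directly to $K$ and produces the desired spherically complete immediate strict $\TdO$-extension. The only subtlety worth flagging is that Theorem~\ref{thm-main2} is stated under Assumption~\ref{assp-standing} (nontrivial derivation and valuation); but if the valuation is trivial then $K$ is already spherically complete (the topology is discrete and $K$ itself works as the extension), and if the derivation is trivial then one applies Corollary~\ref{cor-immediateext} to get a spherically complete immediate $\TO$-extension and equips it with the zero derivation, giving a strict $\TdO$-extension trivially. So no step in this argument presents a genuine obstacle: it is a clean combination of (i) the algebraic fact about convex subgroups of archimedean $\Lambda$-vector spaces with (ii) the main theorem of the paper.
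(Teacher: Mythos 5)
Your proposal is correct and follows essentially the same route as the paper, which simply notes that when $\Lambda$ is archimedean, the convex subgroup $S(\der)$ is automatically a $\Lambda$-subspace, and then invokes Theorem~\ref{thm-main2}. Your observation about the trivial cases (trivial valuation or trivial derivation) is also exactly what the paper handles before introducing Assumption~\ref{assp-standing}, so nothing is missing.
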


\subsection{Uniqueness}
Let $M$ be a spherically complete immediate strict $\TdO$-extension of $K$. It is natural to ask: under which circumstances is $M$ the \emph{unique} such extension up to $\LdO(K)$-isomorphism? Uniqueness holds if $K$ is itself spherically complete, for then $M = K$. If $\der$ is trivial, then any immediate strict $\TdO$-extension of $K$ has trivial derivation as well, so $K$ has a unique spherically complete immediate strict $\TdO$-extension up to $\LdO(K)$-isomorphism by Corollary~\ref{cor-immediateext}.

\medskip\noindent
Suppose $T = \RCF$. Suppose also that $\der$ is small and that $\res(K)$ is linearly surjective (that is, for each $a_0,\ldots,a_r \in \res(K)$, there is $y \in \res(K)$ with $1+a_0y+\cdots+a_ry^{(r)} = 0$). If $K$ is monotone or if $K$ is asymptotic, then $M$ is unique up to isomorphism over $K$ by~\cite[Section 7.4]{ADH17} and~\cite{PC20}, respectively. Linear surjectivity seems to be essential here---in the final section of~\cite{ADH18B}, the authors construct a real closed $H$-field $R$ with small derivation which does not have a unique spherically complete immediate strict $\RCF^{\cO,\der}$-extension up to isomorphism over $R$. All $H$-fields are asymptotic, but no $H$-field has linearly surjective residue field (as the induced derivation on the residue field of an $H$-field is always trivial).

\medskip\noindent
When $T \neq \RCF$, nothing is known about uniqueness outside of the trivial cases. All the results in the case $T = \RCF$ depend crucially on \emph{differential henselianity}, a differential-algebraic property which we have yet to generalize to our setting.


\end{document}